\let\cal\mathcal
\let\hat\widehat
\let\tilde\widetilde
\let\phi\varphi
\let\epsilon\varepsilon
\newcommand\Q{{\bf Q}} 
\newcommand\Z{{\bf Z}}
\newcommand\C{{\bf C}}
\newcommand\N{{\bf N}}
\newcommand\A{{\bf A}}
\newcommand\E{{\bf E}}
\newcommand\B{{\bf B}}
\newcommand\G{{\mathcal G}}
\newcommand\F{{\bf F}}
\renewcommand\i{{\bf i}}
\newcommand\D{{\bf D}}
\newcommand\M{{\bf M}}
\renewcommand\c{{\bf c}}
\newcommand\id{{\mathrm{id}}}
\newcommand\Emb{{\mathrm{Emb}}}
\newcommand\Mat{{\mathrm{Mat}}}
\newcommand\Fil{{\mathrm{Fil}}}
\newcommand\Lie{{\mathrm{Lie}}}
\newcommand\pa{{\mathrm{pa}}}
\newcommand\an{{\mathrm{an}}}
\newcommand\crochu{{\{\{u\}\}}}
\newcommand\brax{{\langle \langle t_\pi \rangle \rangle}}
\newcommand\Sen{{\mathrm{Sen}}}
\newcommand\dR{{\mathrm{dR}}}
\newcommand\ra{{\rightarrow}}
\renewcommand\Ref{{\cal{F}}}
\newcommand\dif{{\mathrm{Dif}}}
\renewcommand{\O}{{{\cal O}}}
\newcommand\Zp{{\Z_p}}
\newcommand\Qp{{\Q_p}}
\newcommand\Cp{{\C_p}}
\newcommand\Qpbar{{\overline{\Q}_p}}
\newcommand\At{{\widetilde{\bf{A}}}}
\newcommand\Atplus{{\widetilde{\bf{A}}^+}}
\newcommand\Bt{{\widetilde{\bf{B}}}}
\newcommand\Btplus{{\widetilde{\bf{B}}^+}}
\newcommand\Et{{\widetilde{\bf{E}}}}
\newcommand\Etplus{{\widetilde{\bf{E}}^+}}
\newcommand\Bdr{{\bf{B}_{\mathrm{dR}}}}
\newcommand\Bdrplus{{\bf{B}_{\mathrm{dR}}^+}}
\newcommand\Btrigplus{{\Bt_{\mathrm{rig}}^+}}
\newcommand\Gal{{\mathrm{Gal}}}
\newcommand\Hom{{\mathrm{Hom}}}
\newcommand\GL{{\mathrm{GL}}}
\newcommand\Frac{{\mathrm{Frac}}}
\newcommand\LT{{\mathrm{LT}}}
\newcommand\la{{\mathrm{la}}}
\newcommand\cbf{{\bf{c}}}
\newcommand\kbf{{\bf{k}}}
\newcommand\cycl{{\mathrm{cycl}}}
\newcommand\crys{{\mathrm{crys}}}
\newcommand\st{{\mathrm{st}}}
\newcommand\rig{{\mathrm{rig}}}
\newcommand\BEK{{\B_{|K}^{\otimes E}}}
\newcommand\unr{{\mathrm{unr}}}
\newcommand\tri{{\mathrm{tri}}}
\newtheorem{question}{Question}
\numberwithin{equation}{section}
\author{Léo Poyeton}
\address{Institut de Mathématiques de Bordeaux}
\email{leo.poyeton@math.u-bordeaux.fr}
\urladdr{https://www.math.u-bordeaux.fr/~lpoyeton/}
\date{\today}
\title{Locally analytic vectors and rings of periods}
\begin{document}

\begin{abstract}
In this paper, we try to extend Berger's and Colmez's point of view, using locally analytic vectors in order to generalize classical cyclotomic theory, in higher rings of periods. We also explain how the formalism of locally analytic vectors recovers the ring $\B_{\Sen}$ of Colmez, and extends to Sen theory in the de Rham case, and to classical $(\phi,\Gamma)$-modules theory. We explain what happens when we try to generalize constructions of $(\phi,\Gamma)$-modules to arbitrary infinitely ramified $p$-adic Lie extensions, and provide a conjecture on the structure of the locally analytic vectors in the corresponding rings. We also highlight the fact that the situation should be very different, depending on wether the $p$-adic Lie extension ``contains a cyclotomic extension'' or not. Finally, we explain how some of these constructions may be related to the construction of a ring of trianguline periods. 
\end{abstract}

\subjclass{11F80; 11F85; 11S20; 12H99; 13J05; 22E60; 22E99}

\keywords{Locally analytic vectors, rings of periods, $(\phi,\Gamma)$-modules, trianguline representations}

\maketitle

\tableofcontents

\section*{Introduction}

Let $p$ be a prime, and let $K$ be a finite extension of $\Q_p$. We fix $\Qpbar = \overline{K}$ an algebraic closure of $K$, and we let $\G_K = \Gal(\overline{K}/K)$ be its absolute Galois group. 

A classical idea in $p$-adic Hodge theory in order to study $p$-adic representations of $\G_K$ is to use an intermediate extension $K_\infty/K$ such that $K_\infty/K$ is nice enough but such that it contains ``most of the ramification'' of $\Qpbar/K$, so that $\Qpbar/K_\infty$ is almost étale in the sense of Faltings (which is the same as saying that the $p$-adic completion of $K_\infty$ is perfectoid). The main example of such an extension is the cyclotomic extension $K(\mu_{p^\infty})$ of $K$, which has been thoroughly used in $p$-adic Hodge theory, notably in Sen theory and $(\phi,\Gamma)$-modules theory. 

In some sense, Kummer extensions are simpler than the cyclotomic extension, and work from Breuil \cite{breuil1998schemas} and Kisin \cite{KisinFiso} show that Kummer extensions are very useful in order to study semistable representations. However, Kummer extensions are never Galois and this implies that we usually have to replace them by their Galois closure which increases the difficulty of the situation. Lubin-Tate extensions attached to uniformizers of $K$, of which the cyclotomic extension when $K=\Qp$ is a particular case, trivialize local class field theory and thus seem particularly useful in order to extend the $p$-adic Langlands correspondence to $\GL_2(K)$ (see for example \cite{KR09,FX13,Ber14MultiLa} for work in this direction). More generally, the interesting framework should be the one of infinitely ramified Galois extensions whose Galois group is a $p$-adic Lie group, with potential applications in Iwasawa theory \cite{venjakob2003iwasawa}. 

Let $V$ be a $p$-adic representation of $\G_K$, and let $K_\infty = K(\mu_{p^\infty})$, $H_K= \Gal(\Qpbar/K_\infty)$ and $\Gamma_K = \Gal(K_\infty/K)$. Recall that the cyclotomic character $\chi_\cycl : \Gamma_K \to \Z_p^\times$ identifies $\Gamma_K$ with an open subgroup of $\Z_p^\times$. Since $\Qpbar/K_\infty$ is almost étale, $(V \otimes_\Qp \Cp)^{H_K} \otimes_{\hat{K_\infty}}\Cp \simeq V \otimes_\Qp \Cp$, so that the study of the $\Cp$-representation $V \otimes_\Qp \Cp$ is reduced to the one of $(V \otimes_\Qp \Cp)^{H_K}$. The idea of Sen to study such a representation \cite{sen1980continuous} is to consider the subspace $D_{\Sen}(V)$ of $K$-finite vectors, which are elements of $(V \otimes_\Qp \Cp)^{H_K}$ which belong to finite dimensional sub-$K$-vector spaces stable by $\Gamma_K$. This is a sub-$K_\infty$-vector space of $(V \otimes_\Qp \Cp)^{H_K}$, and Sen proved that $D_{\Sen}(V) \otimes_{K_\infty}\hat{K_\infty} \simeq (V \otimes_\Qp \Cp)^{H_K}$. 

If $K_\infty$ is any infinitely ramified $p$-adic Lie extension $K_\infty/K$, and if $V$ is a $\Qp$-representation of $\G_K$, then since $\Qpbar/K_\infty$ is almost étale, we still have an isomorphism $(V \otimes_\Qp \Cp)^{H_K} \otimes_{\hat{K_\infty}}\Cp \simeq V \otimes_\Qp \Cp$, but if the dimension of $\Gamma_K = \Gal(K_\infty/K)$ as a $p$-adic Lie group is greater or equal to $2$, then the space of $K$-finite vectors of this semilinear $\hat{K_\infty}$-representation of $\Gamma_K$ is no longer suitable, as shown by \cite[Prop. 1.5]{Ber14SenLa}.

In order to generalize Sen theory to any infinitely ramified $p$-adic Lie extension $K_\infty/K$, Berger and Colmez suggested to replace the space of $K$-finite vectors and the use of normalized Tate's traces maps (which no longer exist in general \cite{fourquaux2009applications}) by the space of locally analytic vectors, which are elements $x$ such that the orbit map $g \mapsto g(x)$ is a locally analytic function on $\Gamma_K$. This gives a decompletion of $(V \otimes_\Qp \Cp)^{\Gal(\Qpbar/K_\infty)}$ into a $\hat{K_\infty}^{\la}$-vector space of dimension $\dim_{\Qp}V$, but in general $\hat{K_\infty}^{\la}$ strictly contains $K_\infty$. 

Recall that the strategy developped by Fontaine (see \cite{fontaine1994representations}) to study $p$-adic representations of $\G_K$ is to construct some $p$-adic rings of periods $B$, which are topological $\Qp$-algebras endowed with an action of $\G_K$ and additional structures such that if $V$ is a $p$-adic representation of $\G_K$, then the $B^{\G_K}$-module $D_B(V) := (B \otimes_{\Qp}V)^{\G_K}$ is endowed with the structures coming from those on $B$, and such that the functor $V \mapsto D_B(V)$ gives some interesting invariants attached to $V$. For Fontaine's strategy to work, one requires that these rings of periods $B$ are $\G_K$-regular in the sense of \cite[1.4.1]{fontaine1994representations} (this implies in particular that $B^{\G_K}$ is a field). We then say that a $p$-adic representation $V$ of $\G_K$ of dimension $d$ is $B$-admissible if $B \otimes_{\Qp} V \simeq B^d$ as $B$-representations. The strategy of Fontaine then consists of classifying $p$-adic representations according to the rings of periods for which they are admissible. In the case where $V$ is admissible, $D_B(V)$ can usually be used to recover $V$, or at least $V_{|\G_L}$ for some finite extension $L$ of $K$.

Colmez has constructed in \cite{colmez1994resultat} a ring of periods $\B_{\Sen}$ which recovers Sen's theory in the cyclotomic setting. Precisely, he defines $\B_{\Sen}^n$ as the set of power series in the variable $u$ over $\Cp$, with radius of convergence $\geq p^{-n}$, and endows it with an action of $\Gal(\Qpbar/K(\mu_{p^n}))$ by $g(u)=u+\log \chi_\cycl(g)$ (this makes sense since $\log\chi_{\cycl}(g) \in p^n\mathcal{O}_K$ if $g \in \G_{K_n}$). He then shows that $(\B_{\Sen}^n)^{\G_{K(\mu_{p^n})}}=K(\mu_{p^n})$ and that $K_\infty \otimes_{K_n}(\B_{\Sen}^n \otimes_{\Qp}V)^{\G_{K(\mu_{p^n})}}$ is isomorphic to $\D_{\Sen}(V)$ for $n$ big enough.

One other key ingredient in the study of $p$-adic representations of $\G_K$ is the theory of $(\phi,\Gamma_K)$-modules, which provides an equivalence of categories $V \mapsto D(V)$ between the category of all $p$-adic representations of $\G_K$ and the category of étale $(\phi,\Gamma_K)$-modules. In Fontaine's theory, $(\phi,\Gamma_K)$-modules are finite dimensional vector spaces, defined over a $2$-dimensional local ring $\B_K$ and endowed with semilinear actions of a Frobenius $\phi$ and of $\Gamma_K$ which commutes one to another.

One variant of the theory, which has been used with many useful applications, is the theory of $(\phi,\Gamma_K)$-modules over the Robba ring $\B_{\rig,K}^\dagger$. The theorem of Cherbonnier-Colmez \cite{cherbonnier1998representations} shows that the category of étale $(\phi,\Gamma_K)$-modules over $\B_K$ actually embedds into the category of $(\phi,\Gamma_K)$-modules over $\B_{\rig,K}^\dagger$ of slope $0$, and the slope filtration theorem of Kedlaya \cite{slopes} shows that this is an equivalence of categories. 

One interesting feature of the Robba ring is that it can be used as a bridge between the classical theory of $(\phi,\Gamma_K)$-modules and $p$-adic Hodge theory, as its elements can be embedded inside $\Bdrplus$. In particular, Berger has shown \cite{Ber02} how to recover the invariants attached to a $p$-adic representation $V$ in $p$-adic Hodge theory from its $(\phi,\Gamma_K)$-module on the Robba ring. 

Kisin and Ren have defined Lubin-Tate $(\phi_q,\Gamma_K)$-modules \cite{KR09} and proved that the category of Lubin-Tate étale $(\phi_q,\Gamma_K)$-modules is equivalent to the one of $\Qp$-representations, but unfortunately a result from Fourquaux and Xie \cite{FX13} shows that those $(\phi_q,\Gamma_K)$-modules are usually not overconvergent. Results from Berger \cite{berger2012multivariable} \cite{Ber14MultiLa} suggest that the right objects to consider are once again the locally analytic vectors inside some higher rings of periods.

In this paper, we try to understand what happens if we use the point of view of Berger-Colmez of locally analytic vectors in ``higher rings of periods''.

Our first remark, which follows from the formalism of locally analytic vectors, is that Colmez's construction of $\B_{\Sen}$ can be generalized to construct rings of periods which ``compute the cyclotomic theory''. More precisely, if $\B$ is a $\Qp$-Banach (or Fréchet) ring endowed with an action of $\G_K$, such that the functor $V \mapsto \D_{\B}(V)^{\la}:=(\B \otimes_{\Qp}V)^{\Gal(\Qpbar/K(\mu_{p^\infty})),\Gamma_K-\la}$ gives interesting invariants of $V$, where $V$ is a $p$-adic representation of $\G_K$, then the ring $\mathcal{C}^{\la}(\Gamma_K,\B)_1$, the stalk at the identity of the sheaf of locally analytic functions on $\Gamma_K$ with coefficients in $\B$, ``computes'' the functor $V \mapsto \D_{\B}(V)^{\la}$, in the sense that 

$$(\mathcal{C}^{\la}(\Gamma_K,\B)_1\otimes_{\Qp}V)^{\G_K} \simeq ((\B \otimes_{\Qp}V)^{H_K})^{\Gamma_K-\la}.$$

In particular, this allows us to provide constructions recovering cyclotomic $(\phi,\Gamma)$-modules and cyclotomic Sen theory for $\Bdrplus$-representations in this spirit, which extend to the $F$-analytic Lubin-Tate case as remarked by Berger \cite[\S 8,9,10]{Ber14MultiLa} and Porat \cite[\S 3]{Porat}.

In order to generalize $(\phi,\Gamma_K)$-modules theory to any infinitely ramified $p$-adic Lie extension, one would like to understand the structure of the rings $(\Bt^I)^{H_K,\Gamma_K-\la}$, where the rings $\Bt^I$ are some higher rings of periods which are properly defined in \S 1. For the theory to behave well and indeed generalize, we should expect that $(\Bt^I)^{H_K,\Gamma_K-\la}$ can be interpreted as a ring of power series in $d$ variables, where $d$ is the dimension of $\Gamma_K$ as a $p$-adic Lie group. We expect that if $K_\infty$ contains a twist by an unramified character of the cyclotomic extension, in which case we say that the extension $K_\infty/K$ contains a cyclotomic extension, then the theory does generalize and  the rings $(\Bt^I)^{H_K,\la}$ can be interpreted as rings of power series in $d$ variables:

\begin{conj}
If $K_\infty/K$ contains a cyclotomic extension, then the rings $(\Bt_K^I)^{\Gamma_n-\an}$ can be interpreted as rings of power series in $d$ variables, with some convergence condition.

More precisely, we expect that, for $n \gg 0$, there exist $d$ elements $x_{1,n},\ldots,x_{d,n}$ in $(\Bt_K^I)^{\Gamma_n-\an}$ such that 
$(\Bt_K^I)^{\Gamma_n-\an}$ is the set of power series $\sum_{\i = (i_1,\dots,i_d) \in \N^d}a_{\i}x_{j,n}^{i_j}$ in the variables $(x_{i,n})_{i \in \{1,\cdots,d\}}$ with coefficients in $K$ such that the series $\sum_{\i = (i_1,\dots,i_d) \in \N^d}a_{\i}x_{j,n}^{i_j}$ converge in $(\Bt_K^I)^{\Gamma_n-\an}$.
\end{conj}

It was proven by Berger in \cite[Thm. 4.4]{Ber14MultiLa} that this conjecture holds when $K_\infty/K$ is a Lubin-Tate extension. In the particular case of the cyclotomic extension, Berger's result shows that $(\Bt_K^I)^{\Gamma_n-\an}$ is a ring of power series in one variable with coefficients in $K$, such that the series converge on some annulus depending only on $n$ and $I$. Moreover, this variable is, up to some power of the Frobenius, exactly the one used in the construction of cyclotomic $(\phi,\Gamma)$-modules. In particular, just as in \cite{Ber14SenLa}, we notice that locally analytic vectors applied to the cyclotomic setting recover the classical theory. 

In this paper, we are able to generalize Berger's result and to prove our conjecture in a particular case:

\begin{theo}
Let $K_\infty/K$ be an infinitely ramified $p$-adic Lie extension which is a successive extension of $\Z_p$-extensions and contains a cyclotomic extension. Then the conjecture above is true for $K_\infty/K$.
\end{theo} 

The fact that we expect the need to contain a cyclotomic extension follows from the following, which shows that for $p$-adic Lie extensions which do not contain a cyclotomic extension, the situation looks different:

\begin{theo}
Let $K_\infty/\Q_{p^2}$ be the anticyclotomic extension, where $\Q_{p^2}$ is the unramified extension of $\Qp$ of degree $2$. Then the rings $(\Bt^I)^{H_K,\Gamma_K-\la}$ are equal to $\Q_{p^2}$ if $0 \in I$.
\end{theo}

If $W$ is a Fréchet representation of a $p$-adic Lie group, the space of locally analytic vectors $W^\la$ can be defined but is too small in general to be able to recover $W$ from $W^{\la}$. We provide in this paper computations of locally analytic vectors for Robba rings in the $F$-analytic Lubin-Tate case, which highlights this fact. We also show that taking $F$-locally analytic vectors in the $(\phi_q,\Gamma_K)$-modules on Robba rings recovers modules defined by Colmez in \cite{colmez2014serie} through different methods:

\begin{theo}
Let $V$ be an $F$-analytic representation of $\G_K$, and let $\D_{\rig}^\dagger(V)$ be its attached $(\phi_q,\Gamma_K)$-module over the Robba ring $\B_{\rig,K}^\dagger$. We have the following:
\begin{itemize}
\item $(\Bt_{\rig,K}^{\dagger})^{\Gamma_K-\la} = (\B_{\rig,K}^\dagger)^{\Gamma_K-\la} = K\langle \langle t_\pi \rangle \rangle$ ;
\item $\D_{\rig}^\dagger(V)^{\Gamma_K-\la}=\cap_{n \geq 0}\phi^n(\D_{\rig}^\dagger(V))$ and is a free $K\langle \langle t_\pi \rangle \rangle$-module of rank $\leq \dim_{\Qp}V$ ;
where $t_\pi$ is the ``Lubin-Tate analog of $t$'' and $K\langle \langle T \rangle \rangle$ denote the set of power series in $T$ with coefficients in $K$ and infinite radius of convergence. 
\end{itemize}
\end{theo}

This theorem alongside theorem 3.23 of \cite{colmez2014serie} show that in general the rank of $\D_{\rig}^\dagger(V)^\la$ as a $K\langle \langle t_\pi \rangle \rangle$-module is strictly smaller than $\dim_{\Qp}V$ and is thus too small to recover $\D_{\rig}^\dagger(V)$. 

Finally, we highlight the fact that thinking of the rings $\mathcal{C}^{\la}(\Gamma_K,\B)_1$ as rings of periods could have applications in order to define rings of periods for trianguline representations: a trianguline representation is a representation such that its attached $(\phi,\Gamma_K)$-module on the Robba  ring is a successive extension of rank $1$ $(\phi,\Gamma_K)$-modules, but that does not mean that the corresponding representation itself is a successive extension of rank $1$ representations, because the $(\phi,\Gamma_K)$-modules of rank $1$ that appear in the decomposition do not need to be étale. Trianguline representations are assumed to be related to representations coming from global geometric objects (see for example \cite{emerton2009p} and \cite{Kisin2003}) and for example the representations attached to overconvergent modular forms of finite slope are trianguline.

In order to better understand and parametrize trianguline representations, it would make sense to construct a ring which would be to trianguline representations what $\B_{\crys}$ is to crystalline representations, and we try to offer candidate rings for that purpose. 

Note that, since unramified representations of $\G_K$ are crystalline and thus trianguline, we expect such rings to contain $\Q_p^{\mathrm{unr}}$. Moreover, when we talk about rings of periods $\B$, we expect that the corresponding modules attached to $p$-adic representations $V$ and defined by $\D_{\B}(V) = (\B \otimes_{\Qp}V)^{\G_K}$ satisfy at least the three following properties: 
\begin{enumerate}
\item $\B$ is reduced;
\item for any $p$-adic representation $V$ of $\G_K$, $\D_{\B}(V)$ is a free $\B^{\G_K}$-module;
\item the $\B$-linear map $\alpha_V: \B \otimes_{\B^{\G_K}}\D_{\B}(V) \longrightarrow \B \otimes_{\Qp}V$ deduced from the inclusion $\D_{\B}(V) \subset \B\otimes_{\Qp}V$ by extending the scalars to $\B$, is injective.
\end{enumerate}

The reason why one would have to define several rings is the following:

\begin{prop}
There is no ring of periods $\B$ satisfying those three properties and containing $\Q_p^{\mathrm{unr}}$ such that for any finite extension $K$ of $\Qp$, $\B$ is a trianguline period ring for $\G_K$.
\end{prop}

Therefore, our ring of trianguline periods of $\G_K$ has to be dependent on $K$. In the case $K=\Qp$, since every rank $1$ representation is trianguline, our ring has to contain every $\exp(\alpha \log t)$ with $\alpha \in E$, a field of coefficients. In particular, the ring $\B_{\tri,\Qp}^\an$ we define is to $\Btrigplus$ what the ring $\B_{\Sen}$ introduced in \cite{colmez1994resultat} is to $\Cp$: $\B_{\tri,\Qp}^\an$ is the ring $\mathcal{C}^{\la}(\Gamma_K,\Btrigplus)_1$, which is also the inductive limit of the rings $\mathcal{C}^{\an}(\Gamma_{K_n},\Btrigplus)$. Proposition \ref{prop F-la for Robba} shows that $(\B_{\tri,\Qp}^n)^{\G_{K_n}} = \Qp\langle \langle t\rangle \rangle$, the set of power series in $t$ with infinite radius of convergence. The module $\D_{\tri,\Qp}^{\an}(V)$ is therefore a module over $\Qp\langle \langle t \rangle \rangle$ and is also endowed with a Frobenius $\phi$ coming from the one on $\Btrigplus$ and an operator $\nabla$ coming from the action of the Lie algebra of $\Gamma_K$ and commutes with the action of $\phi$. We then extend these constructions to the $F$-analytic case, constructing a ring $\B_{\tri,K}^{\an}$ in the same fashion, and we extend Fontaine's classical formalism of admissibility to take this setting into account.

Generalizing the notion of refinements of $p$-adic representations \cite{mazur2000theme} \cite{bellaichechenevier} to our setting, we prove the following:

\begin{theo}
\label{theo intro Btadm implies triang}
Let $V$ be an $F$-analytic representation of $\G_K$ which is $\B_{\tri,K}^{\an}$-admissible. Then $V$ is trianguline. 
\end{theo}

While the ring $\B_{\tri,K}^\an$ is too small to contain the periods of all $F$-analytic trianguline representations of $\G_{K}$, we could adapt our constructions to ``add a log to our ring'', which would cover the semistable periods, but we would still be missing the ``nongeometric'' periods of trianguline representations, which appear in item $(ii)$ of theorem 3.23 of \cite{colmez2014serie}. It is not yet clear how many periods one would have to add to $\B_{\tri,K}^\an$ to get a ring of trianguline periods.

\section*{Structure of the paper}
The first section of the paper recalls the theory of classical rings of periods and the theory of $(\phi,\Gamma)$-modules and the rings it involves. The second section recalls the theory of locally and pro-analytic vectors. In \S 3, we recall the main results from \cite{Ber14SenLa}. We explain in \S 4 how this framework recovers classical Sen theory for $\Bdrplus$-representations, and we compute what $(\B_{\dR}^+)^{H_K,\la}$ looks like in some particular cases with emphasis on the Lubin-Tate one. In section 5 we explain how $(\phi,\Gamma)$-modules theory is recovered through our framework. In \S 6, we explain what we expect to happen in general when trying to generalize $(\phi,\Gamma)$-modules theory by using locally analytic vectors, prove the particular case of the conjecture and highlight some problems which may arise in the anticyclotomic case. The computations of locally analytic vectors in Robba rings is done in \S 7. Finally, \S 8 is devoted to the applications to trianguline representations and towards a construction of rings of trianguline periods.

\section{Classical $p$-adic rings of periods and $(\phi,\Gamma)$-modules}
\subsection{Fontaine's strategy and some rings of periods}
Let $p$ be a prime, let $K$ be a finite extension of $\Q_p$ and let $\G_K = \Gal(\overline{K}/K)$ be its absolute Galois group. Let $k$ be the residual field ok $K$ and let $F=W(k)[1/p]$ be the maximal unramified extension of $\Qp$ inside $K$. Let $\C_p$ be the $p$-adic completion of $\overline{K}$. Let $F_\infty=\Qp(\mu_{p^\infty})$ be the cyclotomic extension of $\Qp$. For $n \geq 1$ let $K_n = K(\mu_{p^n})$ be the extension of $K$ generated by the $p^n$-th roots of unity, and let $K_\infty= \bigcup_{n \geq 1}K(\mu_{p^n})=K \cdot F_\infty$ be the cyclotomic extension of $K$. Let $H_{\Qp} = \Gal(\Qpbar/F_\infty)$ and $\Gamma_{\Qp} = \Gal(F_\infty/\Qp)$. Let $H_K = \Gal(\overline{K}/K_\infty)$ and $\Gamma_K = \Gal(K_\infty/K)$. Recall that the cyclotomic character $\chi_\cycl : \G_K \to \Z_p^\times$ factors through $\Gamma_K$ and identifies it with an open subset of $\Z_p^\times$. We also let $K_0$ denote the maximal unramified extension of $\Qp$ inside $K_\infty$. 

Recall that the strategy developped by Fontaine (see \cite{fontaine1994representations}) to study $p$-adic representations of $\G_K$ is to construct some $p$-adic rings of periods $\B$, which are topological $\Qp$-algebras endowed with an action of $\G_K$ and additional structures such that if $V$ is a $p$-adic representation of $\G_K$, then the $\B^{\G_K}$-module $\D_{\B}(V) := (\B \otimes_{\Qp}V)^{\G_K}$ is endowed with the structures coming from those on $\B$, and such that the functor $\B \mapsto \D_{\B}(V)$ gives some interesting invariants attached to $V$. In Fontaine's original setting, one requires that these rings of periods $\B$ are $\G_K$-regular in the sense of \cite[1.4.1]{fontaine1994representations} (this implies in particular that $\B^{\G_K}$ is a field). We then say that a $p$-adic representation $V$ of $\G_K$ of dimension $d$ is $\B$-admissible if $\B \otimes_{\Qp} V \simeq \B^d$ as $\B$-representations. The strategy of Fontaine then consists of classifying $p$-adic representations according to the rings of periods for which they are admissible. In the case where $V$ is admissible, $\D_{\B}(V)$ can usually be used to recover $V$, or at least $V_{|\G_L}$ for some finite extension $L$ of $K$.

We now recall the construction of some rings of periods. 

Let $\Etplus = \varprojlim\limits_{x \mapsto x^p}\mathcal{O}_{\Cp} = \left\{(x^{(0)},\ldots) \in \mathcal{O}_{\Cp}^\N : (x^{(n+1)})^p=x^{(n)}\right\}$ and recall \cite[Thm. 4.1.2]{Win83} that this ring is naturally endowed with a ring structure which makes it a perfect ring of characteristic $p$ which is complete for the valuation $v_{\E}$ defined by $v_\E(x) = v_p(x^{(0)})$. Let $\Et$ be its field of fractions and note that it is algebraically closed. We denote by $\phi$ the absolute Frobenius $x \mapsto x^p$ on $\Etplus$ and $\Et$. The action of $\G_{\Qp}$ on $\mathcal{O}_{\Cp}$ induces a continuous action of $\G_{\Qp}$ on $\Et$. 

Choose a sequence $\epsilon=(\epsilon^{n})_{n \in \N} \in \Etplus$ of compatible $p^n$-th roots of unity (with $\epsilon^{(1)} \neq 1$). Let $\overline{v} = \epsilon-1 \in \Etplus$ and let $\E_{\Qp}:=\F_p(\!(\overline{v})\!) \subset \Et$. Let $\E = \E_{\Qp}^{sep}$ be the separable closure of $\E_{\Qp}$ inside $\Et$. The field $\E_{\Qp}$ is left invariant by the action of $H_{\Qp}$ so that we have a morphism $H_{\Qp} \to \Gal(\E/\E_{\Qp})$. By \cite[Thm. 3.2.2]{Win83}, it is actually an isomorphism. We also let $\E_K=\E^{H_K}$. Note that $\Gamma_K$ acts on $\E_K$, and that the action of $\G_{\Qp}$ on $\overline{v}$ is given by $g(\overline{v})=(1+\overline{v})^{\chi_\cycl(g)}-1$. 

Let $\At = W(\Et)$ and let $\Atplus = W(\Etplus)$. We also let $\Bt = \Frac(\At) = \At[1/p]$ and $\Btplus = \Atplus[1/p]$. By functoriality of Witt vectors, the action of $\G_{\Qp}$ extends to an action on $\At$ and $\Bt$ that commutes with the Frobenius $\phi$. If $L$ is a finite extension of $\Qp$, we let $\Bt_L = \Bt^{H_L}$ and $\At_L = \At^{H_L}$, where $H_L=\Gal(\Qpbar/L(\mu_{p^\infty}))$. 

Note that any element $x$ of $\Atplus$ can be written as $x=\sum_{k \geq 0}p^k[x_k]$ where the $x_k$ belong to $\Etplus$ and $[\cdot]$ denotes the Teichmüller lift. Recall \cite[1.5.1]{fontaine1994corps} that we have a surjective morphism of rings $\theta : \Atplus \ra \mathcal{O}_{\Cp}$ given by $\theta(x) = \sum_{k \geq 0}p^kx_k^{(0)}$ and whose kernel is a principal maximal ideal of $\Atplus$. This morphism $\theta$ naturally extends to $\Btplus$ to a surjective morphism that we still denote by $\theta : \Btplus \ra \C_p$. For $m \in \N$, we let $\B_m$ be the ring $\Btplus/{\ker(\theta)^m\Btplus}$ and we endow it with the structure of a $p$-adic Banach ring by taking the image of $\Atplus$ as its ring of integers. We let $\Bdrplus = \varprojlim\limits_{m \in \N}\B_m$ be the completion of $\Btplus$ for the $\ker(\theta)$-adic topology and we endow it with the Fréchet topology of the projective limit. By construction, $\theta$ extends to a continuous morphism $\theta: \Bdrplus \to \C_p$ and the action of $\G_{\Qp}$ on $\Btplus$ extends by continuity to a continuous action on $\Bdrplus$. We let $\Bdr$ be the fraction field of $\Bdrplus$. The power series defining $\log[\epsilon]$ converges in $\Bdrplus$ to an element $t$ that generates the maximal ideal $\ker(\theta : \Bdrplus \to \Cp)$ of $\Bdrplus$, so that $\Bdr = \Bdrplus[1/t]$. Note that the action of $\G_{\Qp}$ on $t$ is given by $g(t) = \chi_{\cycl}(g)\cdot t$. We endow $\Bdr$ with a filtration by setting $\Fil^i\Bdr = t^i\Bdrplus$. We call representations that are $\Bdr$-admissible ``de Rham representations''. 

Fontaine has also defined several other rings of periods, among which $\B_{\crys}$ and $\B_{\st}$, in order to study $p$-adic representations. Recall that $\B_\crys$ is endowed with a Frobenius $\phi$, $\B_\st$ contains $\B_\crys$, is endowed with a Frobenius $\phi$ and a monodromy operator $N$ such that $\B_\crys = \B_\st^{N=0}$, and $\Bdr$ is a field endowed with a filtration $\{\Fil^i\Bdr\}_{i \in \Z}$ and such that there is an injective map $\B_{\st} \rightarrow \Bdr$. Moreover, these rings all contain the element $t$, and there exist rings $\B_\crys^+$ and $\B_\st^+$ such that $\B_\crys=\B_\crys^+[1/t]$ and $\B_\st = \B_\st^+[1/t]$. Representations that are $\B_\crys$-admissible and $\B_\st$-admissible are respectively called crystalline and semi-stable representations. The relations between those rings imply that crystalline representations are semi-stable and that semi-stable representations are de Rham. We do not recall the proper definitions of $\B_\crys$ and $\B_\st$ as they are not needed in this note. 

\subsection{Cyclotomic $(\phi,\Gamma)$-modules}
Let us now recall briefly the theory of $(\phi,\Gamma)$-modules and some of the rings involved in the theory. Let $v = [\epsilon]-1$. Let $\A_{\Qp}$ be the $p$-adic completion of $\Zp(\!(v)\!)$ inside $\At$. This is a discrete valuation ring with residue field $\E_{\Qp}$. Since
$$\phi(v) = (1+v)^p-1 \quad \textrm{and} \quad g(v)=(1+v)^{\chi_\cycl(g)}-1 \textrm{ if } g \in \G_{\Qp},$$
the ring $\A_{\Qp}$ and its field of fractions $\B_{\Qp} = \A_{\Qp}[1/p]$ are both stable by $\phi$ and $\G_{\Qp}$. 

For $r > 0$, we define $\Bt^{\dagger,r}$ the subset of overconvergent elements of ``radius'' $r$ of $\Bt$, by

$$\Bt^{\dagger,r}=\left\{x = \sum_{n \ll -\infty}p^n[x_n] \textrm{ such that } \lim\limits_{k \to +\infty}v_{\E}(x_k)+\frac{pr}{p-1}k =+\infty \right\}$$
and we let $\Bt^\dagger = \bigcup_{r > 0}\Bt^{\dagger,r}$ be the subset of all overconvergent elements of $\Bt$. 

Let $\B_{\Qp}^{\dagger,r}$ be the subset of $\B_{\Qp}$ given by
$$\B_{\Qp}^{\dagger,r}=\left\{\sum_{i \in \Z}a_iv^i, a_i \in \Qp \textrm{ such that the } a_i \textrm{ are bounded and } \lim\limits_{i \to - \infty}v_p(a_i)+i\frac{pr}{p-1} = +\infty \right\},$$
and note that $\B_{\Qp}^{\dagger,r} = \B_{\Qp} \cap \Bt^{\dagger,r}$.  

Let $\B_{\Qp}^\dagger = \bigcup_{r > 0}\B_{\Qp}^{\dagger,r}$. By \S 2 of \cite{matsuda1995local}, this is a Henselian field, and its residue ring is still $\E_{\Qp}$. Since $\B_{\Qp}^\dagger$ is Henselian, there exists a finite unramified extension $\B_K^\dagger/\B_{\Qp}^\dagger$ inside $\Bt$, of degree $f$ and whose residue field is $\E_K$. Therefore, there exists $r(K) > 0$ and elements $x_1,\ldots,x_f$ in $\B_K^{\dagger,r(K)}$ such that $\B_K^{\dagger,s} = \bigoplus_{i=1}^f \B_{\Qp}^{\dagger,s}\cdot x_i$ for all $s \geq r(K)$. We let $\B_K$ be the $p$-adic completion of $\B_K^\dagger$ and we let $\A_K$ be its ring of integers for the $p$-adic valuation. One can show that $\B_K$ is a subfield of $\Bt$ stable under the action of $\phi$ and $\Gamma_K$ (see for example \cite[Prop. 6.1]{colmez2008espaces}). Let $\A$ be the $p$-adic completion of $\bigcup_{K/\Qp}\A_K$, taken over all the finite extensions $K/\Qp$. Let $\B = \A[1/p]$. Note that $\A$ is a complete discrete valuation ring whose field of fractions is $\B$ and with residue field $\E$. Once again, both $\A$ and $\B$ are stable by $\phi$ and $\G_{\Qp}$. Moreover, we have $\A^{H_K}=\A_K$ and $\B_K=\B^{H_K}$, so that $\A_K$ is a complete discrete valuation ring with residue field $\E_K$ and fraction field $\B_K = \A_K[1/p]$. If $L$ is a finite extension of $K$, then $\B_L/\B_K$ is an unramified extension of degree $[L_\infty:K_\infty]$ and if $L/K$ is Galois then so is $\B_L/\B_K$, and we have the following isomorphisms: $\Gal(\Bt_L/\Bt_K) = \Gal(\B_L/\B_K) = \Gal(\E_L/\E_K) = \Gal(L_\infty/K_\infty) = H_K/H_L$.

\begin{defi}
If $K$ is a finite extension of $\Qp$, a $(\phi,\Gamma_K)$-module $D$ on $\A_K$ (resp. $\B_K$) is an $\A_K$-module of finite rank (resp. a finite dimensional $\B_K$-vector space) endowed with semilinear actions of $\Gamma_K$ and $\phi$ that commute one to another.   

It is said to be étale if $1 \otimes \phi: \phi^*D \to D$ is an isomorphism (resp. if there exists a basis of $D$ such that $\Mat(\phi) \in \GL_d(\A_K)$).
\end{defi}

If $K$ is a finite extension of $\Qp$ and if $V$ is a $p$-adic representation of $\G_K$, we set 
$$D(V) = (\B \otimes_{\Qp}V)^{H_K}.$$
Note that $D(V)$ is a $(\phi,\Gamma_K)$-module. Moreover, if $V$ is a $p$-adic representation of $\G_K$, then $D(V)$ is étale and $(\B \otimes_{\B_K}D(V))^{\phi=1}$ is canonically isomorphic to $V$ (see \cite[Prop. 1.2.6]{Fon90}). The functors $V \mapsto D(V)$ and $D \mapsto (\B \otimes_{\B_K}D)^{\phi=1}$ then induce an equivalence of tannakian categories between $p$-adic representations of $\G_K$ and étale $(\phi,\Gamma_K)$-modules.

For $r > 0$, we define a valuation $V(\cdot,r)$ on $\Btplus[1/[\overline{v}]]$ by setting
$$V(x,r) = \inf_{k \in \Z}(k+\frac{p-1}{pr}v_{\E}(x_k))$$
for $x = \sum_{k \gg - \infty}p^k[x_k]$. 

For $r=0$, we let $V(\cdot,0)$ be the $p$-adic valuation on $\Btplus[1/[\overline{v}]]$. 

If $I$ is a closed subinterval of $[0;+\infty[$, $I \neq [0,0]$, we let $V(x,I) = \inf_{r \in I, r \neq 0}V(x,r)$ (one can take a look at remark 2.1.9 of \cite{GP18} to understand why we avoid defining $V(\cdot,0)$). We then define the ring $\Bt^I$ as the completion of $\Btplus[1/[\overline{v}]]$ for the valuation $V(\cdot,I)$ if $0 \not \in I$, and as the completion of $\Btplus$ for $V(\cdot,I)$ if $I=[0;r]$. We will write $\Bt_{\mathrm{rig}}^{\dagger,r}$ for $\Bt^{[r,+\infty[}$ and $\Bt_{\mathrm{rig}}^+$ for $\Bt^{[0,+\infty[}$. We also define $\Bt_{\mathrm{rig}}^\dagger = \bigcup_{r \geq 0}\Bt_{\mathrm{rig}}^{\dagger,r}$. 

Let $I$ be a subinterval of $[0,+\infty[$ which is either a subinterval of $]1,+\infty[$ or of the form $[0,r]$, for some $r > 0$. Let $f(Y) = \sum_{k \in \Z}a_kY^k$ be a power series with $a_k \in F$ and such that $v_p(a_k)+k/\rho \to +\infty$ when $|k| \to + \infty$ for all $\rho \in I$. The series $f(v)$ converges in $\Bt^I$ and we let $\B_{\Qp}^I$ denote the set of all $f(\pi)$ with $f$ as above. It is a subring of $\Bt_{\Qp}^I$. 

We also write $\B_{\mathrm{rig},\Qp}^{\dagger,r}$ for $\B_{\Qp}^{[r;+\infty[}$. It is a subring of $\B_{\Qp}^{[r;s]}$ for all $s \geq r$ and note that the set of all $f(v) \in \B_{\mathrm{rig},\Qp}^{\dagger,r}$ such that the sequence $(a_k)_{k \in \Z}$ is bounded is exactly the ring $\B_{\Qp}^{\dagger,r}$. Let $\B_{\Qp}^{\dagger}=\cup_{r \gg 0}\B_{\Qp}^{\dagger,r}$. 

Recall that, for $K$ a finite extension of $\Qp$, there exists a separable extension $\E_K/\E_{\Qp}$ of degree $f=[K_\infty:F_\infty]$ and an attached unramified extension $\B_K^{\dagger}/\B_{\Qp}^{\dagger}$ of degree $f$ with residue field $\E_K$, so that there exists $r(K) > 0$ and elements $x_1,\cdots x_f \in \B_K^{\dagger,r(K)}$ such that $\B_K^{\dagger,s}= \bigoplus_{i=1}^f\B_{\Qp}^{\dagger,s}\cdot x_i$ for all $s \geq r(K)$. If $r(K) \leq \min(I)$, we let $\B_{K}^I$ be the completion of $\B_K^{\dagger,r(K)}$ for $V(\cdot,I)$, so that $\B_K^I=\oplus_{i=1}^f\B_{\Qp}^I\cdot x_i$. 

We actually have a better description of the rings $\B_{\rig,K}^{\dagger,r}$ in general:

\begin{prop}
Let $K$ be a finite extension of $\Qp$. 
\begin{enumerate}
\item There exists $v_K \in \A_K^{\dagger,r(K)}$ whose image modulo $p$ is a uniformizer of $\E_K$ and such that, for $r \geq r(K)$, every element $x \in \B_K^{\dagger,r}$ can be written as $x = \sum_{k \in \Z}a_kv_K^k$, where $a_k \in F'=\Q_p^{\mathrm{unr}} \cap K_\infty$, and the power series $\sum_{k \in \Z}a_kT^k$ is holomorphic and bounded on $\left\{p^{-1/e_Kr} \leq |T| < 1 \right\}$.
\item Let $\mathcal{H}^{\alpha}_{F'}(T)$ be the set of power series $\sum_{k\in \Z}a_kT^k$ where $a_k \in F'$ and such that, for all $\rho \in [\alpha;1[, \lim\limits_{k \to \pm \infty}|a_k|\rho^k=0$ and let $\alpha_K^r = p^{-1/e_Kr}$. Then the map $\mathcal{H}^{\alpha}_{F'}(T) \rightarrow \B_{\rig,K}^{\dagger,r}$ sending $f$ to $f(v_K)$ is an isomorphism.
\end{enumerate}
\end{prop}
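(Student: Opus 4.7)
The approach follows the standard strategy in the theory of overconvergent $(\phi,\Gamma)$-modules. The starting point is that $\E_K/\E_{\Qp}$ is a finite separable extension of degree $f$ whose maximal unramified sub-extension corresponds to $F'/\Qp$; in particular, the residue field of $\E_K$ is the residue field $k'$ of $F'$, and $\E_K/\Frac(k'[\![\overline{v}_K]\!])$ is totally ramified of degree $e_K$, for a suitable uniformizer $\overline{v}_K$ of $\E_K$. The first step of part (1) is to lift $\overline{v}_K$ to an element $v_K \in \A_K^{\dagger,r(K)}$, which is done by choosing an Eisenstein polynomial with root $\overline{v}_K$ over the unramified part, lifting its coefficients via the Teichmüller section (landing in $\A_{F'}^{\dagger,r(K)}$ for $r(K)$ large enough), and applying Hensel's lemma to the resulting polynomial in the Henselian ring $\A_K^\dagger$. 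Enlarging $r(K)$ if necessary, this produces the desired $v_K$ reducing to $\overline{v}_K$ modulo $p$.

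Next, to prove the expansion in part (1), the plan is to proceed by successive approximation modulo powers of $p$. Given $x \in \A_K^{\dagger,r}$ (after multiplying by a suitable power of $p$), we use that $\overline{x} \in \E_K = k'(\!(\overline{v}_K)\!)$ admits a unique Laurent expansion $\overline{x} = \sum_k \overline{a}_k \overline{v}_K^k$, lift each $\overline{a}_k$ to the Teichmüller representative in $\O_{F'}$, and set $x^{(0)} = \sum_k a_k v_K^k$. Then $x - x^{(0)} \in p \A_K^{\dagger,r'}$ for some $r' \geq r$, and we iterate. To check that the sum makes sense and that the claimed bounds hold, the plan is to compute $V(v_K^k, r)$ explicitly: because $v_K$ lifts a uniformizer of $\E_K$ in a totally ramified extension of degree $e_K$, one gets $V(v_K,r) = \tfrac{p-1}{pr} \cdot \tfrac{1}{e_K}$ (up to normalization), and the condition $v_p(a_k) + k/(e_K r) \to +\infty$ as $k\to -\infty$ is exactly equivalent to convergence of $\sum_k a_k v_K^k$ for $V(\cdot,r)$. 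This is what produces the inner radius $p^{-1/e_K r}$ in the annulus.

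For part (2) the goal is to extend the expansion from $\B_K^{\dagger,r}$ (bounded coefficients) to $\B_{\rig,K}^{\dagger,r}$ (possibly unbounded, with convergence on each closed sub-annulus). The map $\mathcal{H}^\alpha_{F'}(T) \to \B_{\rig,K}^{\dagger,r}$ sending $f$ to $f(v_K)$ is well-defined: the growth condition $|a_k|\rho^k\to 0$ for every $\rho \in [\alpha_K^r;1[$ translates, via the formula for $V(v_K,\rho)$ computed above, into convergence of $f(v_K)$ for each valuation $V(\cdot,[r,s])$, hence in the Fréchet topology of $\B_{\rig,K}^{\dagger,r} = \varprojlim_s \B_K^{[r,s]}$. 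Injectivity follows from the uniqueness already established in part (1) applied after truncation at large $s$. For surjectivity, approximate $x \in \B_{\rig,K}^{\dagger,r}$ by elements of $\B_K^{\dagger,s}$ for $s \to +\infty$, apply part (1) to each approximation, and use the compatibility of the resulting coefficients (again by uniqueness) together with completeness of $F'$ to extract a single power series $f \in \mathcal{H}^\alpha_{F'}(T)$ with $f(v_K) = x$.

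The main obstacle is the precise matching between the intrinsic valuations $V(\cdot,\rho)$ on $\Bt^{[r,s]}$ and the Gauss norms on the annuli $\{|T| = p^{-1/e_K\rho}\}$ in $T$-space. This rests on the fact that $v_K$ is a \emph{lift} of a uniformizer, so that in its Teichmüller expansion $v_K = [\overline{v}_K] + p\, w$ the correction term $w \in \Atplus$ contributes only negligibly to $V(v_K,\rho)$, giving $V(v_K,\rho) = V([\overline{v}_K],\rho) = \tfrac{p-1}{p\rho\, e_K}$. Once this identity is established (and similarly for products $v_K^k$, where cross terms must be controlled to show no cancellation), the equivalence between the two descriptions of $\B_{\rig,K}^{\dagger,r}$ follows formally.
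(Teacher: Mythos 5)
The paper's own ``proof'' is simply a citation to Propositions 7.5 and 7.6 of Colmez's \emph{Espaces vectoriels de dimension finie et représentations de de Rham}, with a warning that the normalizations differ. Your sketch reconstructs exactly the argument underlying that reference --- lift a uniformizer of $\E_K$ to $\A_K^\dagger$ by Henselianity, compute the valuation $V(v_K^k,\rho)$ by reducing to the Teichmüller representative, run a $p$-adic successive approximation to get the Laurent expansion, and pass to the Fréchet completion for part (2) --- so the strategy is the same and the structure is sound.

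Two small imprecisions worth flagging. First, when you write $v_K=[\overline{v}_K]+pw$ you place $w\in\Atplus$; in general $w$ only lies in $\At^{\dagger,r(K)}$, but since $V(w,\rho)\geq 0$ on the relevant range the conclusion $V(v_K,\rho)=V([\overline{v}_K],\rho)$ still holds. Second, your displayed value $V(v_K,\rho)=\tfrac{p-1}{p\rho e_K}$ is off by a factor of $\tfrac{p}{p-1}$: with the paper's normalization one has $V([y],\rho)=\tfrac{p-1}{p\rho}v_\E(y)$ and $v_\E(\overline{v}_K)=\tfrac{1}{e_K}v_\E(\overline{v})=\tfrac{p}{(p-1)e_K}$, giving $V(v_K,\rho)=\tfrac{1}{e_K\rho}$, which is what actually produces the inner radius $p^{-1/e_K r}$. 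Neither affects the correctness of the method, but in a full write-up the valuation bookkeeping is precisely where the ``different normalizations'' warning in the paper bites, so it is worth getting right.
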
 
\begin{proof}
The first item is proved in \cite[Prop. 7.5]{colmez2008espaces} and the second one in \cite[Prop. 7.6]{colmez2008espaces}. Be careful that the notations for the rings and the normalizations of the valuations used in Colmez's paper are a bit different than ours. 
\end{proof}

The following theorem is the main result of \cite{cherbonnier1998representations} and shows that every étale $(\phi,\Gamma_K)$-module is the base change to $\B_K$ of an overconvergent module:
\begin{theo}
\label{theo cherbonniercolmez}
If $D$ is an étale $(\phi,\Gamma_K)$-module, then the set of free sub-$\B_K^\dagger$-modules of finite type stable by $\phi$ and $\Gamma_K$ admits a bigger element $D^\dagger$ and one has $D= \B_K \otimes_{\B_K^{\dagger}}D^\dagger$. 
\end{theo}

In particular, if $V$ is a $p$-adic representation of $\G_K$, then there exists an étale $(\phi,\Gamma_K)$-module over $\B_K^\dagger$ which we will denote by $\D^\dagger(V)$ and such that $D(V) = \B_K \otimes_{\B_K^{\dagger}}\D^\dagger(V)$. We let $\D_{\rig}^\dagger(V) = \B_{\rig,K}^\dagger \otimes_{\B_K^{\dagger}}\D^\dagger(V)$. 

If $E$ if a finite extension of $\Qp$, we can make the following definition:
\begin{defi}
A $(\phi,\Gamma_K)$-module over $E \otimes_{\Qp}\B_{\rig,K}^\dagger$ is a finite module $D$ over $E \otimes_{\Qp}\B_{\rig,K}^\dagger$, equipped with a semi-linear Frobenius $\phi_D$ and a continuous semi-linear action of $\Gamma_K$ such that $D$ is free as a $\B_{\rig,K}^\dagger$-module, $\id \otimes \phi_D: \B_{\rig,K}^\dagger\otimes_{\phi,\B_{\rig,K}^\dagger}D \ra D$ is an isomorphism and that the actions of $\phi_D$ and $\Gamma_K$ commute. 
\end{defi}

By \cite[Lemm. 1.30]{Nakapieds}, a $(\phi,\Gamma_K)$-module over $E \otimes_{\Qp}\B_{\rig,K}^\dagger$ is free as an $E \otimes_{\Qp}\B_{\rig,K}^\dagger$-module. We say that a $(\phi,\Gamma_K)$-module over $E \otimes_{\Qp}\B_{\rig,K}^\dagger$ is étale if its underlying $\phi$-module over $\B_{\rig,K}^\dagger$ is étale.

\subsection{Lubin-Tate $(\phi,\Gamma)$-modules}
\label{phigammaLT}

We now recall the theory of $(\phi,\Gamma)$-modules in the Lubin-Tate setting. We let $F$ be a finite extension of $\Qp$, $\pi$ a uniformizer of $\mathcal{O}_F$ and $\LT$ be a Lubin-Tate formal $\mathcal{O}_F$-module attached to the uniformizer $\pi$ of $\mathcal{O}_F$. Let $q$ be the cardinal of the residue field of $F$ and let $h$ be such that $q=p^h$. Let $F_0 = F \cap \Q_p^{\unr}$. We let $F_n$ denote the extension of $F$ generated by the points of $\pi^n$-torsion of $\LT$ for $n \geq 1$, and $F_\infty = \bigcup_{n \geq 1}F_n$. We let $\Gamma_F = \Gal(F_\infty/F)$ and $H_F = \Gal(F_\infty/F)$. By Lubin-Tate's theory \cite[Thm. 2]{LT65}, the Lubin-Tate character $\chi_\pi : \G_F \to \mathcal{O}_F^\times$ induces an isomorphism $\Gamma_F \simeq \mathcal{O}_F^\times$. For $a \in \mathcal{O}_F$, we let $[a](T)$ denote the power series that corresponds to the multiplication by $a$ map on $\LT$. Let $v_0 =0$ and for each $n \geq 1$, let $v_n \in \Qpbar$ be such that $[\pi](v_{n})=v_{n-1}$, with $v_1 \neq 0$.

Recall that we defined rings $\At, \Atplus, \At^I$ and $\Bt, \Btplus, \Bt^I$ previously, and in what follows we will keep the same notations for those rings tensored over $F_0$ (resp. $\mathcal{O}_{F_0}$ in the case of $\At$ and $\At^I$), by $F$ (resp. $\mathcal{O}_F$). We let $\phi_q = \phi^{\circ h}$ and we let $r_k = p^{kh-1}(p-1)$ for $k \geq 1$. 

Recall that by \cite[\S 9.2]{Col02}, there exists $v \in \Atplus$ whose image in $\Etplus$ is $(v_0,v_1,\cdots)$, where $\Etplus = \varprojlim\limits_{x \mapsto x^q}\mathcal{O}_{\Cp}/\pi$ (by \cite[Prop. 4.3.1]{BrinonConrad}, this is the same ring $\Etplus$ as before) and such that $g(v) = [\chi_\pi(g)](v)$ and $\phi_q(v) = [\pi](v)$. We also let $t_\pi= \log_{\LT}(v) \in \Btrigplus$ so that $g(t_\pi)=\chi_\pi(g)\cdot t_\pi$ and $\phi_q(t_\pi)=\pi t_\pi$. Note that when $F=\Qp$ and $\pi=p$, this is exactly the classical $t$ of $p$-adic Hodge theory. 

For $\rho > 0$, let $\rho' = \rho \cdot e \cdot p/(p-1)\cdot (q-1)/q$, where $e$ is the ramification index of $F/\Qp$. Let $I$ be a subinterval of $[0,+\infty[$ which is either a subinterval of $]1,+\infty[$ or of the form $[0,r]$, for some $r > 0$. Let $f(Y) = \sum_{k \in \Z}a_kY^k$ be a power series with $a_k \in F$ and such that $v_p(a_k)+k/\rho' \to +\infty$ when $|k| \to + \infty$ for all $\rho \in I$. The series $f(v)$ converges in $\Bt^I$ and we let $\B_F^I$ denote the set of all $f(v)$ with $f$ as above. It is a subring of $\Bt_F^I$. We also write $\B_{\mathrm{rig},F}^{\dagger,r}$ for $\B_F^{[r;+\infty[}$.

We let $\A_F$ denote the $p$-adic completion of $\mathcal{O}_F(\!(v)\!)$ inside $\At$, and we let $\B_F = \A_F[1/p]$. As in the cyclotomic case, to any extension $L/F$ finite, there corresponds extensions $\A_L/\A_F$ and $\B_L/\B_F$, of degree $[L_\infty : F_\infty]$ where $L_\infty = L \cdot F_\infty$, equipped with actions of $\phi_q$ and $\Gamma_L := \Gal(L_\infty/L)$. As in the cyclotomic case, we fix once and for all a finite extension $K$ of $F$ and will apply the theory to $p$-adic representations of $\G_K = \Gal(\Qpbar/K)$. There is also a theory of $(\phi_q,\Gamma_K)$-modules over $\B_K$, which are finite dimensional $\B_K$ vector spaces endowed with commuting semilinear actions of $\Gamma_K$ and $\phi_q$.  Once again, such a $(\phi_q,\Gamma_K)$-module is said to be étale if there exists a basis in which $\Mat(\phi_q)$ belongs to $\GL_d(\A_K)$. By specializing Fontaine's constructions \cite[A.1.2.6 and A.3.4.3]{Fon90}, Kisin and Ren prove the following, which is \cite[Thm. 1.6]{KR09}:

\begin{theo}
There is a tannakian equivalence of categories between $F$-linear representations of $\G_K$ and étale $(\phi_q,\Gamma_K)$-modules over $\B_K$.
\end{theo}

However, unlike in the cyclotomic case, these $(\phi_q,\Gamma_K)$ modules are rarely overconvergent. Berger showed in \cite{Ber14MultiLa} that the right subcategory of representations corresponding to overconvergent $(\phi_q,\Gamma_K)$-modules was the one of $F$-analytic representations (note however that there are representations which are not $F$-analytic but whose attached $(\phi_q,\Gamma_K)$-module is overconvergent). An $E$-representation $V$ of $\G_K$, where $E \supset F^{\Gal}$, is said to be $F$-analytic if for any $\tau \in \Emb(E,\Qpbar)$, $\tau \neq \id$, the semilinear $\Cp$-representation $\Cp \otimes^{\tau}V$ is trivial. In that case, theorem 10.4 of \cite{Ber14MultiLa} shows that one can attach to $V$ an étale $F$-analytic $(\phi_q,\Gamma_K)$-module $\D_{\rig}^{\dagger}(V)$ on $\B_{\rig,K}^\dagger$, which means that the operator $\frac{\log g}{\log\chi_\pi(g)}$ is $F$-linear on $\D_{\rig}^{\dagger}(V)$. Note that, when $F=\Qp$, every representation of $\G_{K}$ is $\Qp$-analytic.

For an $F$-analytic character $\delta : K^\times \ra E^\times$, we let $w(\delta)$ denote its weight, which is defined by $w(\delta) = \delta'(1)$.

\begin{lemm}
\label{lemm rank 1 phigammaLT}
Let $\D$ be a rank $1$ $F$-analytic $(\phi_q,\Gamma_K)$-module over $E \otimes_K \B_{\rig,K}^\dagger$. Then there exists $\delta$ an $F$-analytic character $K^\times \ra E^\times$ and a basis $e$ of $\D$ in which $g(e) = \delta(\chi_\pi(g))\cdot e$ and $\phi_q(e) = \delta(\pi)\cdot e$.
\end{lemm}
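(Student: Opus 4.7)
The plan is to fix an arbitrary basis $e_0$ of $\D$ and then successively normalize the actions of $\phi_q$ and of $\Gamma_K$. Since $\D$ has rank $1$ over $E\otimes_F\B_{\rig,K}^\dagger$, we may write $\phi_q(e_0)=f\cdot e_0$ and $g(e_0)=c(g)\cdot e_0$ for some $f\in(E\otimes_F\B_{\rig,K}^\dagger)^\times$ and a continuous map $c\colon\Gamma_K\to(E\otimes_F\B_{\rig,K}^\dagger)^\times$. The commutation $\phi_q\circ g=g\circ\phi_q$ is equivalent to the identity $\phi_q(c(g))/c(g)=g(f)/f$ for every $g\in\Gamma_K$.

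The first and most delicate step is to normalize the Frobenius. I would invoke the classification of rank $1$ $\phi_q$-modules over the Robba ring $E\otimes_F\B_{\rig,K}^\dagger$, a consequence of Kedlaya's slope filtration theorem (combined with a suitable twist by a power of $t_\pi$), to produce $u\in(E\otimes_F\B_{\rig,K}^\dagger)^\times$ and $\alpha\in E^\times$ such that $\phi_q(u)/u=f/\alpha$. In the new basis $e:=u^{-1}e_0$ we then have $\phi_q(e)=\alpha\cdot e$. Writing $g(e)=\tilde c(g)\cdot e$, the commutation relation forces $\phi_q(\tilde c(g))=\tilde c(g)$ for every $g\in\Gamma_K$, since both $\phi_q$ and $g$ fix $\alpha$. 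Using $(\B_{\rig,K}^\dagger)^{\phi_q=1}=F$, so that $(E\otimes_F\B_{\rig,K}^\dagger)^{\phi_q=1}=E$, this yields $\tilde c(g)\in E^\times$, and the cocycle relation then reduces to $\tilde c$ being a continuous group homomorphism $\Gamma_K\to E^\times$.

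The last step is to extend $\tilde c$ to an $F$-analytic character $\delta\colon K^\times\to E^\times$ satisfying $\delta(\chi_\pi(g))=\tilde c(g)$ on $\Gamma_K$ and $\delta(\pi)=\alpha$. The $F$-analyticity of $\D$, encoded by the $F$-linearity of the operator $\frac{\log g}{\log\chi_\pi(g)}$, implies that $\tilde c$ is $F$-analytic on $\Gamma_K$. Transported via $\chi_\pi$, this becomes an $F$-analytic character on the open subgroup $\chi_\pi(\Gamma_K)$ of $\O_F^\times$, which extends uniquely (by the rigidity of $F$-analytic characters on a Lie subgroup of finite index) to $\O_F^\times$; one then extends further to $\O_K^\times$, and finally to $K^\times$ by selecting the value $\delta(\pi_K)\in E^\times$ at a uniformizer $\pi_K$ of $K$ so as to enforce $\delta(\pi)=\alpha$. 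The principal obstacle is the first step: exhibiting the element $u$ that reduces $f$ to a scalar in $E^\times$, where the structure of the Lubin-Tate Robba ring and Kedlaya's slope theory enter essentially.
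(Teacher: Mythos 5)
The paper's own proof of this lemma is a citation: it is ``the same as'' Colmez's classification of rank $1$ $(\phi,\Gamma)$-modules over the Robba ring (\cite[Prop.\ 3.1]{colmez2008representations}), transported to the $F$-analytic Lubin--Tate setting by \cite[Thm.\ 10.4]{Ber14MultiLa}. Colmez's argument twists $\D$ by a character $\delta_0$ so that the degree becomes $0$, uses slope theory to conclude that the twisted module is \emph{\'etale}, and then invokes the equivalence of \'etale $(\phi,\Gamma)$-modules over the Robba ring with Galois representations to read off the desired character; the $\phi$- and $\Gamma$-structures are used together at every step, precisely because it is the passage through the Galois side that produces the scalar basis.

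Your outline instead proposes to normalize $\phi_q$ first, \emph{before} looking at $\Gamma$, by invoking ``the classification of rank $1$ $\phi_q$-modules over $E\otimes_F\B_{\rig,K}^\dagger$'' as a consequence of Kedlaya's slope theorem and a twist by $t_\pi$. This step does not hold. Kedlaya's slope theory, applied to a rank $1$ $\phi_q$-module over the Robba ring, gives you an integer slope and (after twisting to slope $0$) \'etaleness; but \'etale rank $1$ $\phi_q$-modules over $\B_{\rig,K}^\dagger$ \emph{with no $\Gamma$-structure} are classified, via Fontaine's equivalence and Kedlaya's descent, by characters of $H_K$, not by $E^\times$. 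There is no unit $u$ producing $\phi_q(u)/u = f/\alpha$ in general unless one can show that the attached $H_K$-character is unramified (up to a Lubin--Tate twist), and it is exactly the commutation with $\Gamma_K$ --- hence the full $(\phi_q,\Gamma_K)$-module structure and the equivalence with Galois characters --- that forces this. You do flag the first step as ``the principal obstacle,'' and indeed it is: as written, the obstacle is not merely technical but structural, and the repair is to follow Colmez's route of making the module \'etale and then passing to the Galois side rather than trying to trivialize $\phi_q$ in isolation. The remaining steps of your argument (deducing $\tilde c(g)\in E^\times$ from $(\B_{\rig,K}^\dagger)^{\phi_q=1}=F$, then promoting $\tilde c$ to an $F$-analytic character $\delta$ of $K^\times$) are fine once the first step is in place, modulo being careful about whether the tensor is over $F$ or $K$ and over what subfield $\phi_q$ acts trivially.
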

\begin{proof}
This is the same as in \cite[Prop. 3.1]{colmez2008representations}, using \cite[Thm. 10.4]{Ber14MultiLa}.
\end{proof}

\section{Locally and pro-analytic vectors}
Here, we recall some of the theory of locally- and pro-analytic vectors, following the presentation of Emerton in \cite{emerton2004locally} and of Berger in \cite{Ber14MultiLa}.

Let $G$ be a $p$-adic Lie group, and let $W$ be a $\Qp$-Banach representation of $G$. Let $H$ be an open subgroup of $G$ such that there exists coordinates $c_1,\cdots,c_d : H \to \Zp$ giving rise to an analytic bijection $\cbf : H \to \Z_p^d$. We say that $w \in W$ is an $H$-analytic vector if there exists a sequence $\left\{w_{\kbf}\right\}_{\kbf \in \N^d}$ such that $w_{\kbf} \rightarrow 0$ in $W$ and such that $g(w) = \sum_{\kbf \in \N^d}\cbf(g)^{\kbf}w_{\kbf}$ for all $g \in H$. We let $W^{H-\an}$ be the space of $H$-analytic vectors. This space injects into $\cal{C}^{\an}(H,W)$, the space of all analytic functions $f : H \to W$.  Note that $\cal{C}^{\an}(H,W)$ is a Banach space equipped with its usual Banach norm, so that we can endow $W^{H-\an}$ with the induced norm, that we will denote by $||\cdot ||_H$. With this definition, we have $||w||_H = \sup_{\kbf \in \N^d}||w_{\kbf}||$ and $(W^{H-\an},||\cdot||_H)$ is a Banach space.

The space $\cal{C}^{\an}(H,W)$ is endowed with an action of $H \times H \times H$, given by
\[
((g_1,g_2,g_3)\cdot f)(g) = g_1 \cdot f(g_2^{-1}gg_3)
\]
and one can recover $W^{H-\an}$ as the closed subspace of $\cal{C}^{\an}(H,W)$ of its $\Delta_{1,2}(H)$-invariants,  where $\Delta_{1,2} : H \to H \times H \times H$ denotes the map $g \mapsto (g,g,1)$ (we refer the reader to \cite[§3.3]{emerton2004locally} for more details).

We say that a vector $w$ of $W$ is locally analytic if there exists an open subgroup $H$ as above such that $w \in W^{H-\an}$. Let $W^{\la}$ be the space of such vectors, so that $W^{\la} = \bigcup_{H}W^{H-\an}$, where $H$ runs through a sequence of open subgroups of $G$. The space $W^{\la}$ is naturally endowed with the inductive limit topology, so that it is an LB space. 

\begin{lemm}
\label{ringla}
If $W$ is a ring  such that $|\!|xy|\!| \leq |\!|x|\!| \cdot |\!|y|\!|$ for $x,y \in W$, then
\begin{enumerate}
  \item $W^{H-\an}$ is a ring, and $|\!|xy|\!|_H \leq|\!|x|\!|_H \cdot |\!|y|\!|_H$ if $x,y \in W^{H-\an}$;
  \item if $w \in W^\times \cap W^{\la}$, then $1/w \in W^{\la}$. In particular, if $W$ is a field, then  $W^{\la}$ is also a field.
\end{enumerate}
\end{lemm}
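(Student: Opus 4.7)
The plan is to exploit the compatibility of the orbit map $o_w := (g \mapsto g(w))$ with ring structures: since $G$ acts on $W$ by ring homomorphisms, one has $o_{xy} = o_x \cdot o_y$ pointwise, so the orbit map embeds $W^{H-\an}$ multiplicatively into the Banach algebra $\cal{C}^{\an}(H,W)$ of $W$-valued analytic functions on $H$, equipped with pointwise multiplication and the sup-of-coefficients norm.

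For (1), writing $g(x) = \sum_{\kbf} \cbf(g)^{\kbf} x_{\kbf}$ and $g(y) = \sum_{\kbf} \cbf(g)^{\kbf} y_{\kbf}$, the equality $g(xy) = g(x)g(y)$ combined with the Cauchy product formula gives
\[
g(xy) = \sum_{\kbf \in \N^d} \cbf(g)^{\kbf} (xy)_{\kbf}, \qquad (xy)_{\kbf} = \sum_{\mathbf{i}+\mathbf{j} = \kbf} x_{\mathbf{i}} y_{\mathbf{j}}.
\]
Submultiplicativity in $W$ yields $\|(xy)_{\kbf}\| \leq \|x\|_H \cdot \|y\|_H$. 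Given $\epsilon > 0$, choosing $N$ so that $\|x_{\mathbf{i}}\| < \epsilon/\|y\|_H$ for $|\mathbf{i}| > N$ and the symmetric condition for $y$, every $\kbf$ with $|\kbf| > 2N$ has the property that in each decomposition $\kbf = \mathbf{i}+\mathbf{j}$ at least one of $|\mathbf{i}|,|\mathbf{j}|$ exceeds $N$, so $\|(xy)_{\kbf}\| < \epsilon$. Thus $(xy)_{\kbf} \to 0$, whence $xy \in W^{H-\an}$ with the required norm bound.

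For (2), take $w \in W^\times \cap W^{H-\an}$ and write $g(w) = w + \sum_{|\kbf| \geq 1} \cbf(g)^{\kbf} w_{\kbf}$. Restricting to the open subgroup $H' := \cbf^{-1}(p^n \Zp^d)$ and using the adapted coordinates $\cbf' := p^{-n}\cbf$, the coefficient of $\cbf'(g)^{\kbf}$ in $g(w) - w$ becomes $p^{n|\kbf|} w_{\kbf}$, whose norm is at most $p^{-n|\kbf|} \|w\|_H$. Hence the function $u := w^{-1}(g(w) - w)$ lies in $\cal{C}^{\an}(H', W)$ with $\|u\|_{H'} \leq p^{-n} \|w\|_H \cdot \|w^{-1}\| < 1$ for $n$ sufficiently large. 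Working in the Banach algebra $\cal{C}^{\an}(H', W)$, the element $1+u$ is then invertible via the geometric series $\sum_{k \geq 0}(-1)^k u^k$, and pointwise
\[
w^{-1}(1+u(g))^{-1} = w^{-1} \cdot w \cdot g(w)^{-1} = g(w^{-1}) = o_{w^{-1}}(g).
\]
Thus $o_{w^{-1}} \in \cal{C}^{\an}(H', W)$, i.e.\ $w^{-1} \in W^{H'-\an} \subset W^{\la}$. When $W$ is a field, $W^\times = W \setminus \{0\}$, so $W^{\la}$ is a subring closed under inversion of nonzero elements, hence a subfield.

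The main technical point is the shrinking argument in (2): one must verify that, upon passing from $H$ to a smaller open subgroup $H'$, the analytic norm of the perturbation $o_w - w$ decreases by the expected factor $p^{-n}$ via the explicit transformation of coefficients under the rescaling of coordinates. Once this estimate is in hand, the inversion reduces to a formal geometric-series argument inside $\cal{C}^{\an}(H',W)$, and the identification of the resulting function with $o_{w^{-1}}$ is automatic.
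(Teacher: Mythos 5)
Your proof is correct and follows the same strategy as the reference the paper cites for this lemma, \cite[Lemm.~2.5]{Ber14SenLa}: for (1) the Cauchy-product formula together with the non-archimedean inequality, and for (2) shrinking to a smaller open subgroup to make the perturbation $o_w - w$ small in the $\cal{C}^{\an}$-norm, then inverting via a geometric series in the Banach algebra $\cal{C}^{\an}(H',W)$. The only implicit hypotheses worth flagging are that the norm on $W$ is non-archimedean (used in the Cauchy-product estimate and in summing the geometric series) and that $W$ is commutative (used in the display $w^{-1}(1+u(g))^{-1} = g(w^{-1})$), both of which hold in the paper's setting.
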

\begin{proof}
See \cite[Lemm. 2.5]{Ber14SenLa}.
\end{proof}

It is often useful to choose a specific fundamental system of open neighborhoods of $G$: let $G_0$ be a compact open subgroup of $G$ which is $p$-valued and saturated (see \cite[\S 26 and 27]{schneider2011p} for the definition and proof of existence), with coordinates $\c$, and set $G_{n}=G^{p^{n}}=\left\{ g^{p^{n}}:g\in G_{0}\right\}$ for $n \geq 0$.

These are subgroups (\cite[Remark 26.9]{schneider2011p}) which have induced
coordinates $\c:G_{n}\xrightarrow{\sim}(p^{n}\Z_{p})^{d}$.
The normalization is such that for $w\in W^{G_{n}-\an}$ we can write
\[
g(w)=\sum_{\mathbf{k\in}\mathbb{N}^{d}}c(g)^{\mathbf{k}}w_{\mathbf{k}}
\]
for $g\in G_{n}$ and $\left\{ w_{\mathbf{k}}\right\} _{\mathbf{k}\in\N^{d}}$
with $p^{n\left|\mathbf{k}\right|}w_{\mathbf{k}}\rightarrow0$, and
the Banach norm is given by
\[
|\!|w|\!|_{G_{n}-\an}=\sup_{\mathbf{k}}|\!|p^{n\mathbf{k}}w_{\mathbf{k}}|\!|.
\]
It is easy to check if $w\in W^{G_{n}-\an}$ then $|\!|w|\!|_{G_{m}-\an}\leq|\!|w|\!|_{G_{m+1}-\an}$
for $m\geq n$ and $|\!|w|\!|_{G_{m}-\an}=|\!|w|\!|$
for $m\gg n$ (see \cite[Lemme 2.4]{Ber14SenLa}).

Let $W$ be a Fréchet space whose topology is defined by a sequence $\left\{p_i\right\}_{i \geq 1}$ of seminorms. Let $W_i$ be the Hausdorff completion of $W$ at $p_i$, so that $W = \varprojlim\limits_{i \geq 1}W_i$. The space $W^{\la}$ can be defined but as stated in \cite{Ber14MultiLa} and as will be explained in \S 7, this space is too small in general for what we are interested in, and so we make the following definition, following \cite[Def. 2.3]{Ber14MultiLa}:

\begin{defi}
If $W = \varprojlim\limits_{i \geq 1}W_i$ is a Fréchet representation of $G$, then we say that a vector $w \in W$ is pro-analytic if its image $\pi_i(w)$ in $W_i$ is locally analytic for all $i$. We let $W^{\pa}$ denote the set of all pro-analytic vectors of $W$, so that $W^{\pa} = \varprojlim\limits{i \geq 1}W_i^{\la}$. 
\end{defi}

We extend the definition of $W^{\la}$ and $W^{\pa}$ for LB and LF spaces respectively. 

\begin{prop}
\label{lainla and painpa}
Let $G$ be a $p$-adic Lie group which is a uniform pro-$p$-group, let $B$ be a Banach $G$-ring and let $W$ be a free $B$-module of finite rank, equipped with a compatible $G$-action. If the $B$-module $W$ has a basis $w_1,\ldots,w_d$ in which $g \mapsto \Mat(g)$ is a globally analytic function $G \to \GL_d(B) \subset M_d(B)$, then
\begin{enumerate}
\item $W^{H-\an} = \bigoplus_{j=1}^dB^{H-\an}\cdot w_j$ if $H$ is a subgroup of $G$;
\item $W^{\la} = \bigoplus_{j=1}^dB^{\la}\cdot w_j$.
\end{enumerate}
Let $G$ be a $p$-adic Lie group, let $B$ be a Fréchet $G$-ring and let $W$ be a free $B$-module of finite rank, equipped with a compatible $G$-action. If the $B$-module $W$ has a basis $w_1,\ldots,w_d$ in which $g \mapsto \Mat(g)$ is a pro-analytic function $G \to \GL_d(B) \subset M_d(B)$, then
$$W^{\pa} = \bigoplus_{j=1}^dB^{\pa}\cdot w_j.$$
\end{prop}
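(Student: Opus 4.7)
The plan is to prove both inclusions $\bigoplus_j B^{H-\an}\cdot w_j \subseteq W^{H-\an}$ and $W^{H-\an} \subseteq \bigoplus_j B^{H-\an}\cdot w_j$ by a direct matrix computation using the hypothesis on $\Mat(g)$. The easy inclusion is essentially formal: if $b_1,\ldots,b_d \in B^{H-\an}$, then for $w = \sum_j b_j w_j$ we can compute $g(w) = \sum_k \bigl(\sum_j M_{kj}(g)\,g(b_j)\bigr) w_k$, and each $g \mapsto g(b_j)$ and $g \mapsto M_{kj}(g)$ is analytic on $H$ with values in $B$, so the coefficients in the basis $(w_k)$ are $H$-analytic functions, and this is precisely what it means for $w$ to lie in $W^{H-\an}$ after writing everything on the basis.

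For the nontrivial inclusion, take $w \in W^{H-\an}$ and write $w = \sum_j c_j w_j$. By definition of $W^{H-\an}$, there is an expansion $g(w) = \sum_{\kbf \in \N^d} \cbf(g)^\kbf w_\kbf$ with $w_\kbf \to 0$ in $W$. Decomposing each $w_\kbf = \sum_j b_{j,\kbf} w_j$ in the basis, and using that the coordinate projections $W \to B$ are continuous (as $W$ is free of finite rank over the Banach ring $B$), one gets $b_{j,\kbf} \to 0$ in $B$, so $g(w) = \sum_j f_j(g) w_j$ with $f_j \in \cal{C}^{\an}(H,B)$. Comparing this expansion to the one obtained from the action via the matrix, namely $g(w) = \sum_k \bigl(\sum_j M_{kj}(g)\,g(c_j)\bigr) w_k$, I get the linear system $(f_k(g))_k = M(g) \cdot (g(c_j))_j$. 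Inverting gives $(g(c_j))_j = M(g)^{-1}(f_k(g))_k$.

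The key step is now to check that $g \mapsto M(g)^{-1}$ is still globally analytic $G \to M_d(B)$: this follows because the entries of $M(g)^{-1}$ are polynomial expressions in the entries of $M(g)$ divided by $\det M(g)$, and Lemma \ref{ringla} (applied to the Banach ring $\cal{C}^{\an}(H,B)$ with its natural norm) guarantees that inverses of units remain analytic. Multiplying the analytic row vector $(f_k)_k$ by the analytic matrix $M^{-1}$ then shows $g \mapsto g(c_j)$ is in $\cal{C}^{\an}(H,B)$, i.e. $c_j \in B^{H-\an}$. The statement for $W^{\la}$ is obtained by taking the union over a cofinal sequence of open subgroups $H$, using that $\bigcup_H B^{H-\an} = B^{\la}$.

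Finally, for the pro-analytic statement in the Fréchet setting, I would write $B = \varprojlim B_i$ and $W_i = B_i \otimes_B W$, each a free $B_i$-module of rank $d$ on the images of the $w_j$. The hypothesis that $g \mapsto \Mat(g)$ is pro-analytic means that its image in $\GL_d(B_i)$ is locally analytic for each $i$, so the Banach case just proved applies in each $W_i$ to give $W_i^{\la} = \bigoplus_j B_i^{\la} \cdot w_j$. The conclusion $W^{\pa} = \bigoplus_j B^{\pa}\cdot w_j$ then follows directly from the definition of pro-analytic vectors as those with locally analytic image in every $W_i$. The main obstacle I expect is the careful handling of the analytic inversion $M \mapsto M^{-1}$ in the Banach ring $\cal{C}^{\an}(H,B)$ and checking that the decomposition of $w_\kbf \in W$ on the basis yields coefficients tending to zero in $B$; both are technical but follow from the standard functional-analytic properties of finite free modules over Banach rings.
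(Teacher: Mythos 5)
Your overall strategy — decompose $w$ on the basis, expand $g(w)$ analytically, compare with the matrix form of the action, and invert $M(g)$ — is the right one and matches the structure of the proof in the cited references. The easy inclusion, the treatment of the coordinate projections, and the reduction of the Fréchet case to the Banach case are all fine. However, there is a genuine gap at the step you yourself flag as "the key step," namely the claim that $g \mapsto M(g)^{-1}$ is $H$-analytic.

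Your justification for this, via Cramer's rule plus Lemma~\ref{ringla} applied to the Banach ring $\cal{C}^{\an}(H,B)$, is circular. Lemma~\ref{ringla}(2) only tells you that if $\det M$ is already a unit of the ring $\cal{C}^{\an}(H,B)$ and is locally analytic for the relevant group action, then so is its inverse. But knowing that $\det M(g) \in B^\times$ for every $g \in H$ is \emph{not} the same as knowing that $\det M$ is a unit of $\cal{C}^{\an}(H,B)$: a nowhere-vanishing $H$-analytic function to $B$ can fail to have an $H$-analytic inverse. For instance with $H=\Zp$, $B=\Qp$ and trivial action, the function $f(g)=g^2-p$ is analytic on $\Zp$ and never vanishes there, yet $1/f$ is not analytic on $\Zp$, since $f$ has zeros of absolute value $p^{-1/2}<1$ in the closed unit disc. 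So the unit-condition in Lemma~\ref{ringla} is precisely what you would need to check, not something the lemma hands you. Filling this gap requires a separate argument: either exploit the cocycle relation $M(g)^{-1}=g\bigl(M(g^{-1})\bigr)$ together with a careful analysis showing the power-series coefficients of $M$ are themselves locally analytic vectors of $M_d(B)$, or (for the weaker assertion (ii), about $W^{\la}$ rather than $W^{H-\an}$ for a prescribed $H$) pass to a small enough open subgroup $H'\subset H$ on which $M(g)$ is uniformly close to the identity, so that $M(g)^{-1}$ is given by a convergent geometric series and is visibly $H'$-analytic. As written, your proof establishes assertion (ii) up to this shrinking, but does not establish assertion (i) for the given $H$.
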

\begin{proof}
The part for Banach ring is proven in \cite[Prop. 2.3]{Ber14SenLa} and the one for Fréchet rings is proven in \cite[Prop. 2.4]{Ber14MultiLa}.
\end{proof}

Note that the map $\log \chi_\pi : \Gamma_K \to \O_F$ induces isomorphisms $\Gamma_n \simeq \pi^n\O_F$ for $n \gg 0$, and endows $\Gamma_K$ with an $\O_F$-analytic structure as a $p$-adic Lie group. 

If $W$ is an $F$-linear Banach representation of $\Gamma_K = \Gal(K_\infty/K)$, and if $n \geq 1$, we say that $w \in W$ is $F$-analytic on $\Gamma_n = \Gal(K_\infty/K_n)$ if there exists a sequence $\{w_k\}_{k \geq 0}$ of elements of $W$ such that $\pi^{nk}w_k \rightarrow 0$ such that $g(w) = \sum_{k \geq 0}\log \chi_{\pi}(g)^kw_k$ for all $g \in \Gamma_n$. This means that $w$ is a $\Gamma_n$-analytic vector, with $\Gamma_n$ viewed as a $p$-adic Lie group defined over $\O_F$ instead of $\Zp$. 

If $W$ is a locally analytic representation of $\Gamma_K$, we can define operators $\nabla_\tau : W \to W$, for $\tau \in \Sigma_F:=\Emb(F,\Qpbar)$ in the following way, as in \cite[\S 2]{Ber14MultiLa}. 
\begin{defi}\label{op nabla tau}
Let $L$ be a field that contains $F^{\Gal}$. If $\tau \in \Sigma_F$, then we have the derivative in the direction $\tau$, which is an element $\nabla_\tau \in L \otimes_{\Qp} \mathrm{Lie}(\Gamma_F)$. The $L$-vector space $\Hom_{\Qp}(F,L)$ is generated by the elements of $\Sigma_F$. If $W$ is an $L$-linear Banach representation of $\Gamma_F$ and if $w$ is a $\Qp$-locally analytic element of $W$ and $g \in \Gamma_F$, then there exists elements $\{ \nabla_\tau \}_{\tau \in \Sigma_F}$ of $F^{\Gal} \otimes_{\Qp} \Lie(\Gamma_F)$ such that we can write 
$$ \log g (w) = \sum_{\tau \in \Sigma_F} \tau(\log \chi_{\pi}(g)) \cdot \nabla_\tau(w). $$
\end{defi}

In particular, there exist $m \gg 0$ and elements $\{w_\kbf \}_{\kbf \in \N^{\Sigma_F}}$ such that if $g \in \Gamma_m$, then $g (w) = \sum_{\kbf \in \N^{\Sigma_F}} \log \chi_{\pi}(g)^\kbf w_\kbf$, where $\log \chi_{\pi}(g)^\kbf = \prod_{\tau \in \Sigma_F} \tau \circ \log \chi_{\pi}(g)^{k_\tau}$. We have $\nabla_\tau(w) = w_{\mathbf{1}_\tau}$ where $\mathbf{1}_\tau$ is the $\Sigma_F$-tuple whose entries are $0$ except the $\tau$-th one which is $1$. If $\kbf \in\N^{\Sigma_K}$, and if we set  $\nabla^\kbf(w) =  \prod_{\tau \in \Sigma_F} \nabla_{\tau}^{k_\tau} (w)$, then $w_\kbf = \nabla^\kbf(w)/\kbf!$. 

\begin{rema}
\label{remark nabla Fla}
If $w$ is an $F$-analytic element of $W$, so that there exists a sequence $\{w_k\}_{k \geq 0}$ of elements of $W$ such that $\pi^{nk}w_k \rightarrow 0$ such that $g(w) = \sum_{k \geq 0}\log \chi_{\pi}(g)^kw_k$ for all $g \in \Gamma_n$, with $n \gg 0$, then $w_k = \nabla_\id^k(w)/k!$. 
\end{rema}

The standard notations for the set of $F$-analytic elements of $W$ is $W^{\Gamma_n-\an,F-\la}$, following the notations from \cite[\S 2]{Ber14MultiLa}. These are the $\Gamma_n$-analytic vectors when we treat $\Gamma_n$ as a $p$-adic Lie group over $\O_F$ instead of $\Zp$. Since in this article we almost always consider the former case and in order to improve the readability and reduce the need for additional notations, we will still denote by $W^{\Gamma_n-\an}$ the set of $\Gamma_n$ $F$-analytic vectors of $W$ in the Lubin-Tate setting. We also let $W^{\la} = \bigcup_{n \geq 1}W^{\Gamma_n-\an}$. The main advantage of this formalism is that the cyclotomic case is exactly the particular case in the Lubin-Tate setting where $F=\Qp$ and $\pi=p$, so that the statements made in the ``$F$-analytic Lubin-Tate setting'' also contain the statements regarding the cyclotomic case.

In the rare cases where we will consider $\Qp$-locally analytic vectors of $W$ in the Lubin-Tate setting, we'll write $W^{\Qp-\la}$ for the set of $\Qp$-locally analytic vectors of $W$. We also use the same formalism for pro-analytic vectors. 

\section{Sen theory by Berger-Colmez}

Recall that to a $p$-adic representation $V$ of $\G_K$, one can attach the $K_\infty$-vector space $\D_{\Sen}(V)$ which is the set of elements of $W=(\Cp \otimes_{\Qp}V)^{H_K}$ which belong to some finite dimensional $K$-vector subspace of $W$ which is stable by $\Gamma_K$. The $K_\infty$-vector space $D_{\Sen}$ comes equipped with an action of the Lie algebra of $\Gamma_K$ and admits a canonical generator $\nabla = \lim_{\gamma \to 1}\frac{\gamma-1}{\chi_\cycl(\gamma)-1}$ which is the operator of Sen, usually denoted by $\Theta_{\Sen}$ and whose eigenvalues are called the generalized Hodge-Tate weights of the representation $V$.

Colmez has constructed in \cite{colmez1994resultat} a ring $\B_\Sen$ as follows:

\begin{defi}
Let $u$ be a variable and $\B_{\Sen}^n=\Cp\crochu_n$ be the set of power series $\sum_{k \geq 0}a_ku^k$ with coefficients in $\Cp$ such that the series $\sum_{k \geq 0}(p^n)^ka_k$ converges in $\Cp$ and equip it with the natural topology and with an action of $\Gal(\overline{K}/K(\mu_{p^n}))$ by setting 
$$g(\sum_{k \geq 0}a_ku^k) = \sum_{k \geq 0}g(a_k)(u+\log\chi_{\cycl}(g))^k.$$
Note that this makes sense since $\log\chi_{\cycl}(g) \in p^n\Z_p$ if $g \in \Gal(\overline{K}/K(\mu_{p^n}))$. Let $\B_{\Sen} = \bigcup_{n \geq 0}\B_{\Sen}^n$, endowed with the inductive limit topology.

We let $\nabla_u$ denote the $\Cp$-linear operator on $\B_{\Sen}$ given by $\nabla_u = -\frac{d}{du}$. 
\end{defi}

We now recall the following properties (for more details, see \cite{colmez1994resultat} and \cite[\S 2.2]{Ber14SenLa}):
\begin{prop}
\label{prop Bsen}~
\begin{enumerate}
\item We have $(\B_\Sen^n)^{\G_{K_n}} = K_n$ ;
\item if $V$ is a $p$-adic representation of $\G_K$ and if $n$ is an integer, let $\D_{\Sen,n}'(V):=(\B_\Sen^n\otimes_{\Qp}V)^{\G_{K_n}}$ equipped with the operator $\nabla_u$ induced by the operator $\nabla_u$ on $\B_\Sen^n$ (meaning that $(\nabla_u)_{D_{\Sen,n}'(V)}$ acts by $\nabla_u \otimes 1$ on $\B_\Sen^n\otimes_{\Qp}V$) and let $\D_{\Sen}'(V):=\bigcup_{n \geq 0}\D_{\Sen,n}'(V)$. Every element $\delta$ of $\D_{\Sen,n}'(V)$ can be written as $\delta^{(0)}+\delta^{(1)}u+\cdots$ where the $\delta^{(i)}$ belong to $\Cp \otimes_{\Qp}V$. Then the map $\delta \mapsto \delta^{(0)}$ induces an isomorphism of $K_\infty$-vector spaces between $\D_{\Sen}'(V)$ and $\D_{\Sen}(V)$, and of $K_n$-vector spaces between $\D_{\Sen,n}'(V)$ and $\D_{\Sen,n}(V)$ for $n \gg 0$. Moreover, the image of $\nabla_u$ by this isomorphism is $\Theta_\Sen$.
\end{enumerate}
\end{prop}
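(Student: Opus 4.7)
Both statements are essentially due to Colmez \cite{colmez1994resultat}; the plan is to present a streamlined proof.

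For (1), an element $f = \sum_{k\geq 0} a_k u^k \in (\B_\Sen^n)^{\G_{K_n}}$ has coefficients $a_k \in \Cp^{H_K} = \hat{K_\infty}$, since $H_K \subset \G_{K_n}$ fixes $u$ (because $\log\chi_\cycl$ vanishes on $H_K$). The invariance equation
\[
\sum_k a_k u^k = \sum_k g(a_k)\,(u + c(g))^k, \qquad c(g) := \log\chi_\cycl(g),
\]
must then hold for every $g \in \Gamma_{K_n}$. My plan is to split the analysis using the Tate normalized trace $R_n : \hat{K_\infty} \to K_n$, which extends coefficient-wise on $\B_\Sen^n$, commutes with the $\Gamma_{K_n}$-action, and has kernel $X_n$ on which $\gamma - 1$ acts invertibly (for any topological generator $\gamma$ of $\Gamma_{K_n}$). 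The $X_n$-part of the $a_k$ is killed by invertibility of $\gamma - 1$, reducing us to the case $a_k \in K_n$. The equation then becomes a polynomial-type identity in $c(g)$ valid on a set of values of $c(g)$ dense in a neighbourhood of $0$ in $p^n\Z_p$, which forces $a_k = 0$ for $k \geq 1$, leaving $f = a_0 \in K_n$.

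For (2), the inverse of $\delta \mapsto \delta^{(0)}$ is given by a twisted exponential. Fix $n$ large enough that the operator norm of $\Theta_\Sen$ on $D_{\Sen,n}(V)$ is compatible with convergence at $|u| = p^{-n}$; then for $w \in D_{\Sen,n}(V)$ the series
\[
\tilde w := \exp(-u\, \Theta_\Sen)(w) = \sum_{k \geq 0} \frac{(-u)^k}{k!}\, \Theta_\Sen^k(w)
\]
converges in $\B_\Sen^n \otimes_{\Qp} V$. Using that $g \in \Gamma_{K_n}$ acts on $D_{\Sen,n}(V)$ as $\exp(c(g)\, \Theta_\Sen)$, a direct manipulation gives $g(\tilde w) = \exp\bigl(-(u+c(g))\,\Theta_\Sen\bigr)\,\exp(c(g)\,\Theta_\Sen)(w) = \tilde w$, so $\tilde w \in D_{\Sen,n}'(V)$ and $(\tilde w)^{(0)} = w$; this yields surjectivity. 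Injectivity of $\delta \mapsto \delta^{(0)}$ comes from the invariance equation: if $\delta^{(0)} = 0$, then for all $g \in \Gamma_{K_n}$ one has $\sum_{k \geq 1} g(\delta^{(k)})\, c(g)^k = 0$; dividing by $c(g)$, then $c(g)^2$, etc., and letting $g \to 1$ successively annihilates each $\delta^{(k)}$. To see $\delta^{(0)} \in D_{\Sen,n}(V)$, I would rewrite the constant-coefficient invariance equation as $g(\delta^{(0)}) - \delta^{(0)} = -\sum_{k \geq 1} g(\delta^{(k)})\, c(g)^k$, which exhibits $\delta^{(0)}$ as a $\Gamma_{K_n}$-analytic vector of $W = (\Cp \otimes V)^{H_K}$; Sen's decompletion theorem \cite{sen1980continuous} then places it in $D_\Sen(V)$, and in $D_{\Sen,n}(V)$ for $n$ large. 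The compatibility $\nabla_u \leftrightarrow \Theta_\Sen$ (up to the sign inherent in the choice $\exp(-u\Theta_\Sen)$) follows from the identity $\nabla_u \tilde w = -\Theta_\Sen(\tilde w)$.

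The main technical obstacle is the convergence analysis: checking that $(p^n)^k \Theta_\Sen^k(w)/k!$ tends to $0$ in $\Cp \otimes V$, which is what forces the ``for $n \gg 0$'' qualifier in (2), and similarly controlling in (1) the Banach norm of the series when reducing to the $K_n$-part via the Tate trace and invoking invertibility of $\gamma - 1$ on $X_n$. The algebraic manipulations once these estimates are in place are routine.
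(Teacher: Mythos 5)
The paper does not give its own proof of this proposition; it simply cites \cite{colmez1994resultat} and \cite{Ber14SenLa}, so there is no in-text argument to compare your proposal against and I assess it on its own terms.

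Your treatment of item (2) is essentially correct: the twisted exponential $\tilde w=\exp(-u\Theta_\Sen)(w)$ is the standard inverse of $\delta\mapsto\delta^{(0)}$, the manipulation $g(\tilde w)=\tilde w$ via $g=\exp(c(g)\Theta_\Sen)$ is exactly the point, and you correctly locate the ``$n\gg0$'' in the convergence of $\sum_k(p^n)^k\Theta_\Sen^k(w)/k!$. One small sign remark: with the paper's convention $\nabla_u=-d/du$ one gets $\nabla_u\tilde w=+\Theta_\Sen(\tilde w)$, not $-\Theta_\Sen(\tilde w)$ (because $-\frac{d}{du}e^{-u\Theta}=\Theta e^{-u\Theta}$), which is precisely what makes $\nabla_u$ go to $\Theta_\Sen$; there is no residual sign to absorb.

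For item (1), the sentence ``the $X_n$-part of the $a_k$ is killed by invertibility of $\gamma-1$'' hides a genuine gap, and calling the remaining estimate routine is too optimistic. The invariance equation couples all coefficients: reading off the $u^j$-term gives $(1-\gamma)(x_j)=\sum_{k>j}\gamma(x_k)\binom{k}{j}c(\gamma)^{k-j}$, so $(\gamma-1)^{-1}$ only expresses $x_j$ through the tail $\{x_k:k>j\}$, and one must then close a recursion. The natural bound $\|x_j\|\le\|(\gamma-1)^{-1}\|_{X_n}\sup_{k>j}\|x_k\|\,|c(\gamma)|^{k-j}$ does not close: taking $\gamma$ a topological generator of $\Gamma_{K_n}$ (so $|c(\gamma)|=p^{-n}$) and $M=\sup_jp^{-nj}\|x_j\|$, which is exactly the Banach norm on $\B_\Sen^n$, it yields only $M\le\|(\gamma-1)^{-1}\|_{X_n}\cdot M$, vacuous because $\|(\gamma-1)^{-1}\|_{X_n}\ge1$ always (as $\|\gamma-1\|\le1$); iterating replaces $M$ by $\sup_{l\ge j+N}p^{-nl}\|x_l\|$, which tends to $0$ but at an arbitrary rate that need not dominate $\|(\gamma-1)^{-1}\|^N$. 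This is not bookkeeping: it is precisely where the quantitative content of Tate--Sen theory lives. The cleaner route, used in the paper for the more general Proposition~\ref{prop Bcrochu iso locana}, is to differentiate the invariance relation at $\gamma=1$ to get $a_{k+1}=-\nabla(a_k)/(k+1)$, hence $a_k=(-1)^k\nabla^k(a_0)/k!$; the convergence condition defining $\B_\Sen^n$ then says exactly that $a_0\in\hat{K_\infty}^{\Gamma_n-\an}$, and the conclusion reduces to the Tate--Sen identity $\hat{K_\infty}^{\Gamma_n-\an}=K_n$, which should be cited rather than re-derived inside the coefficient recursion.
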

\begin{proof}
Item (i) is \cite[Thm. 2 (i)]{colmez1994resultat}. For item (ii), see \cite[Thm. 2 (ii)]{colmez1994resultat} and \cite[Prop. 2.8]{Ber14SenLa}.
\end{proof}

When $K_\infty/K$ is any $p$-adic Lie extension with Galois group $\Gamma_K$ (such that $\dim \Gamma_K \geq 2$ or such that $K_\infty/K$ is almost totally ramified), Berger and Colmez offer to replace classical Sen theory with the theory of locally analytic vectors, by considering the locally analytic vectors of semilinear $\hat{K_\infty}$-representations of $\Gamma_K$:

\begin{theo}
If $W$ is a $\hat{K_\infty}$-semilinear representation of $\Gamma_K$, then the map
$$\hat{K_\infty}\otimes_{\hat{K_\infty}^\la} W^\la \ra W$$
is an isomorphism. Moreover, if $K_\infty/K$ is the cyclotomic extension of $K$, and if $W=(\Cp \otimes_{\Qp}V)^{H_K}$ then $W^{\Gamma_n-\an}=\D_{\Sen,n}(V)$.
\end{theo}
\begin{proof}
The main claim is theorem 3.4 of \cite{Ber14SenLa}, and the particular case for the cyclotomic extension follows from remark 3.3 of ibid.
\end{proof}

We also have in general a nice description of the structure of $\hat{K_\infty}^\la$: if $K_\infty/K$ is a $p$-adic Lie extension with Galois group $\Gamma_K=\Gal(K_\infty/K)$, then by \cite[Thm. 6.1 and Rem. 6.2 (ii)]{Ber14SenLa}, if $L$ is a subfield of $\Cp$, containing $K_\infty(\mu_{p^m})$ for $m \gg 0$, then $L\hat{\otimes}_{K_n}\hat{K_\infty}^{\Gamma_n-\an}$ is isomorphic to the set $L{\{\{X_1,\ldots,X_{d-1}\}\}}_n$ of power series with coefficients in $L$ and radius of convergence $\geq p^{-n}$, where $K_n = K_\infty \cap \hat{K_\infty}^{\Gamma_n-\an}$. 

Note that in the cyclotomic case, the map $\log\chi_{\cycl} : \Gamma_K \to \Z_p$ induces isomorphisms between $\cal{C}^{\an}(\Gamma_n,\Cp)$ and $\B_\Sen^n$, and by taking the inductive limit, between $\cal{C}^{\la}(\Gamma_K,\Cp)_1$, the stalk at the identity of the sheaf of locally analytic functions on $\Gamma_K$ with coefficients in $\Cp$ and $\B_\Sen$, so that this formalism generalizes the construction of $\B_\Sen$ of Colmez. More generally, we make the following definition: 

\begin{defi}
\label{defi Bcrochu}
For any topological $\Qp$-algebra $\B$ which is an LF or LB space, equipped with a continuous action of $\G_K$, we denote by $\cal{C}^{\la}(\Gamma_K,\B)_1$ the stalk at the identity of the sheaf of locally analytic functions on $\Gamma_K$ with coefficients in $\B$, which is the inductive limit of the rings $\cal{C}^{\an}(\Gamma_n,\B)$, endowed with the inductive limit topology.
\end{defi}

Using the rings $\cal{C}^{\an}(\Gamma_n,\Qp)$, we can recover Sen theory (and its generalization by Berger and Colmez):

\begin{prop}
Let $K_\infty/K$ be a $p$-adic Lie extension with Galois group $\Gamma_K=\Gal(K_\infty/K)$ and let $V$ be a $p$-adic representation of $\G_K$. Then we have 
$$((\Cp \otimes_{\Qp}V)^{H_K})^{\Gamma_K-\la} = \bigcup_{n \geq 1}(\cal{C}^{\an}(\Gamma_n,\Cp)\otimes_{\Qp}V)^{\G_{K_n}},$$
where $\G_{K_n}$ acts on $\cal{C}^{\an}(\Gamma_n,\Cp)\otimes_{\Qp}V \simeq \cal{C}^{\an}(\Gamma_n,V)$ through the $\Delta_{1,2}$ map defined in \S 2.
\end{prop}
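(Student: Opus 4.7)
The plan is to unwind the definition of locally analytic vectors via Emerton's description recalled in \S 2 and then exchange taking invariants with a completed tensor product. I set $W = (\Cp \otimes_{\Qp} V)^{H_K}$, a $\hat{K_\infty}$-Banach representation of $\Gamma_K$. By the characterization of \S 2, $W^{\Gamma_n-\an}$ is precisely the space of $\Delta_{1,2}(\Gamma_n)$-invariants of $\cal{C}^{\an}(\Gamma_n, W)$, so taking the union over $n \geq 1$ gives the left-hand side of the proposition.

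The two identifications to make are: (i) $\cal{C}^{\an}(\Gamma_n, W) \simeq \cal{C}^{\an}(\Gamma_n, \Qp) \hat{\otimes}_{\Qp} W$ as Banach representations, where the $\Gamma_n$-action on the right is the translation action on the first factor and the natural action on $W$; and (ii) since $H_K$ acts trivially on $\cal{C}^{\an}(\Gamma_n, \Qp)$ (its $\G_{K_n}$-action factors through the quotient $\G_{K_n} \twoheadrightarrow \Gamma_n$), the $H_K$-invariants can be moved outward:
$$\cal{C}^{\an}(\Gamma_n, \Qp) \hat{\otimes}_{\Qp} W \;=\; \bigl(\cal{C}^{\an}(\Gamma_n, \Qp) \hat{\otimes}_{\Qp} \Cp \otimes_{\Qp} V\bigr)^{H_K}.$$

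Once this is in place, combining the $H_K$-invariants with the $\Delta_{1,2}(\Gamma_n)$-invariants amounts to taking full $\G_{K_n}$-invariants: indeed the short exact sequence $1 \to H_K \to \G_{K_n} \to \Gamma_n \to 1$ identifies the joint invariants with $\G_{K_n}$-invariants, and the $\G_{K_n}$-action on the triple tensor product decomposes as the translation action on $\cal{C}^{\an}(\Gamma_n, \Qp)$ (through the quotient by $H_K$), the natural action on $\Cp$, and the natural action on $V$, which is exactly the action described via $\Delta_{1,2}$ in the statement. Taking the union over $n$ yields the right-hand side.

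The main obstacle is the commutation in (ii): one must check that $H_K$-invariants commute with the $\Qp$-Banach completed tensor product by $\cal{C}^{\an}(\Gamma_n, \Qp)$. Since the latter carries a trivial $H_K$-action and $W = (\Cp \otimes_{\Qp} V)^{H_K}$ is a closed subspace of the Banach space $\Cp \otimes_{\Qp} V$, this should follow formally from closedness and continuity of the $H_K$-action on the Banach tensor product, but it is the one technical point worth writing down carefully. The passage to the inductive limit in $n$ is unproblematic since it is compatible on both sides.
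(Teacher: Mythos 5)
Your proof is correct and follows essentially the same route as the paper, which simply reduces to the more general statement with $\Cp$ replaced by a Banach ring $B$ and dismisses it as "tautological" — the unwinding of the definition of $\Gamma_n$-analytic vectors via $\Delta_{1,2}$-invariants in $\cal{C}^{\an}(\Gamma_n,W)\simeq\cal{C}^{\an}(\Gamma_n,\Qp)\hat\otimes_{\Qp}W$, combined with the short exact sequence $1\to H_K\to\G_{K_n}\to\Gamma_n\to 1$. You flesh out the one step the paper elides (commuting $H_K$-invariants past $\cal{C}^{\an}(\Gamma_n,\Qp)\hat\otimes_{\Qp}-$), and your reasoning there is sound: since $\cal{C}^{\an}(\Gamma_n,\Qp)$ is orthonormalizable and $H_K$ acts trivially on it, the coefficient maps $\cal{C}^{\an}(\Gamma_n,\Cp\otimes_{\Qp}V)\to\Cp\otimes_{\Qp}V$ are $H_K$-equivariant, so an $H_K$-invariant analytic function has coefficients in the closed subspace $W$.
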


Note that this proposition is the consequence of the following proposition which is itself an immediate consequence of the definition of locally analytic vectors, as stated in \cite[§3.3]{emerton2004locally}:

\begin{prop}
\label{prop Bcrochu iso locana}
Let $\B$ be a $\Qp$-algebra which is an LF or LB space, endowed with an action of $\G_K$, and let $V$ be a $\Qp$-representation of $\G_K$. Then for any $n \geq 0$, we have
$$(\cal{C}^{\an}(\Gamma_n,\Qp)\hat{\otimes}_{\Qp}\B)^{\G_{K_n}} \simeq (\B^{H_K})^{\Gamma_n-\an}$$
and
$$((\cal{C}^{\an}(\Gamma_n,\Qp)\hat{\otimes}_{\Qp}\B)\otimes_{\Qp}V)^{\G_{K_n}} \simeq (\B \otimes_{\Qp}V)^{H_K,\Gamma_n-\an}.$$

Moreover, we have 
$$\bigcup_{n \geq 1}(\cal{C}^{\an}(\Gamma_n,\Qp)\hat{\otimes}_{\Qp}\B)^{\G_{K_n}} \simeq (\B^{H_K})^{\Gamma_K-\la}$$
and
$$\bigcup_{n \geq 1}((\cal{C}^{\an}(\Gamma_n,\Qp)\hat{\otimes}_{\Qp}\B))\otimes_{\Qp}V)^{\G_{K_n}} \simeq (\B \otimes_{\Qp}V)^{H_K,\Gamma_K-\la}.$$
\end{prop}
\begin{proof}
For LB spaces, this is tautological, since $H_K$ acts trivially on $\cal{C}^{\an}(\Gamma_n,\Qp)$ and since the set of $\Gamma_n$-locally analytic vectors of $W:=(\B \otimes_{\Qp}V)^{H_K}$ is by definition the subset of elements of $\cal{C}^{\an}(\Gamma_n,W)=\cal{C}^{\an}(\Gamma_n,\Qp)\hat{\otimes}_{\Qp}W$ which are invariant by the action given by $\Delta_{1,2}$ following the notations of \S 2.

For LF spaces, the proof is almost the same because the set of $\Gamma_n$-analytic vectors of $W$ is still the subset of elements of $\cal{C}^{\an}(\Gamma_n,\Qp)\hat{\otimes}_{\Qp}W$ which are invariant by the action given by $\Delta_{1,2}$ by \cite[Coro. 3.4.5]{emerton2004locally}.

The last two isomorphisms follow by taking the inductive limit. 
\end{proof}

In particular, if $\B$ is a topological $\Qp$-algebra which is an LF or LB space, endowed with an action of $\G_K$ such that for $V$ a $p$-adic representation of $\G_K$, $((B \otimes_{\Qp}V)^{H_K})^{\Gamma_K-\la}$ is related to some module attached to $V$ which appears in $p$-adic Hodge theory (e.g. its $(\phi,\Gamma)$-modules), then $\cal{C}^{\an}(\Gamma_n,\Qp)\hat{\otimes}_{\Qp}\B$ can be thought of as a ring of periods that computes those modules.

\section{de Rham computations}

In this section we compute locally analytic vectors and pro-analytic vectors in $\Bdrplus$, both in the cyclotomic case and in the Lubin-Tate case, and we explain how to recover the module $\D_{\dif}^+(V)$ attached to a $p$-adic representation $V$ thanks to the use of the locally analytic vectors. The fact that locally analytic vectors are able to recover $\D_{\dif}^+(V)$ has already been proven in \cite[\S 6.1]{PoratFF} but here we will also use proposition \ref{prop Bcrochu iso locana} to produce a ring of periods which ``computes'' the functor $\D_{\dif}^+$. 

\subsection{Computations in $\Bdrplus$}

We let $\B_{\dR,K}^+$ and $\B_{\dR,K}$ denote respectively $\B_{\dR}^{H_K}$ and $\B_{\dR}^{H_K}$. Recall (cf. \cite[\S 2]{Ber02}) that there is a natural injective, $\G_K$-equivariant map $\Btrigplus \ra \Bdrplus$, which sends $t_\pi$ to a generator of $\ker(\theta)$ in $\Bdrplus$ and we still denote the image of $t_\pi$ through this map by $t_\pi$. For $\tau \in \Sigma_F$, we have a corresponding element $t_\tau \in \Btrigplus$ defined in \cite[\S 5]{Ber14MultiLa} such that $g(t_\tau) = \tau(\chi_\pi(g))\cdot t_\tau$, and we still denote the image of $t_\tau$ through the map $\Btrigplus \ra \Bdrplus$ by $t_\tau$. Note that $t_\tau \in (\B_{\dR,F^{\Gal}}^+)^\times$ if $\tau \neq \id$ (see for example item 2 of \cite[Prop. 3.4]{BMTensTriang}). We let $\partial_\id = \frac{1}{t_\pi}\nabla_\id$. 

\begin{lemm}\label{lemm partial stab bdrplus}
We have $\partial_\id((\B_{\dR,K}^+)^{\pa}) \subset (\B_{\dR,K}^+)^{\pa}$.
\end{lemm}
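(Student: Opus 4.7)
The plan is to realize $\partial_{\id}$ as $t_\pi^{-1} \nabla_{\id}$, where $\nabla_{\id}$ is the directional derivative from Definition \ref{op nabla tau}, and to check that this yields a well-defined operator on $(\B_{\dR,F}^+)^{\pa}$ that takes values in $(\B_{\dR,F}^+)^{\pa}$. This requires three verifications. First, that $\nabla_{\id}$ stabilizes $(\B_{\dR,F}^+)^{\pa}$: this is essentially automatic from the fact that Lie algebra elements arising from the differentiated $\Gamma_F$-action preserve pro-analytic vectors of any Fréchet $\Gamma_F$-representation (cf.\ the discussion after Definition \ref{op nabla tau} and Proposition \ref{lainla and painpa}).

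Second, I must show that for any $x \in (\B_{\dR,F}^+)^{\pa}$ the element $\nabla_{\id}(x)$ lies in $t_\pi \B_{\dR,F}^+$, so that the quotient $\nabla_{\id}(x)/t_\pi$ makes sense in $\B_{\dR,F}^+$. Using the short exact sequence
$$0 \to t_\pi \B_{\dR,F}^+ \to \B_{\dR,F}^+ \xrightarrow{\theta} \hat{F}_\infty \to 0,$$
this divisibility is equivalent to the vanishing $\theta(\nabla_{\id}(x)) = 0$ in $\hat{F}_\infty$. Since $\theta$ is $\G_F$-equivariant it commutes with $\nabla_{\id}$, so it suffices to show $\nabla_{\id}(\theta(x)) = 0$. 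The image $\theta(x)$ lies in $\hat{F}_\infty^{\la}$, and the needed vanishing of $\nabla_{\id}$ on this subspace follows from the structural description of $\hat{F}_\infty^{\la}$ available in our setting (cf.\ \cite{Ber14SenLa}), which in the cases of interest identifies $\hat{F}_\infty^{\la}$ with an algebraic subfield on which the Lie algebra acts trivially (orbits being finite).

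Third, division by $t_\pi$ preserves pro-analyticity. Indeed, multiplication by $t_\pi$ induces a $\Gamma_F$-equivariant Banach-space isomorphism (twisted by the locally analytic character $\chi_\pi$) from $\B_{\dR,F}^+/t_\pi^n$ onto $t_\pi\B_{\dR,F}^+/t_\pi^{n+1}$; such an equivariant isomorphism identifies the subspaces of locally analytic vectors on either side. Taking projective limits over $n$, any $y \in (\B_{\dR,F}^+)^{\pa} \cap t_\pi \B_{\dR,F}^+$ yields $y/t_\pi \in (\B_{\dR,F}^+)^{\pa}$. Combining the three points with the first step gives $\partial_{\id}(x) = \nabla_{\id}(x)/t_\pi \in (\B_{\dR,F}^+)^{\pa}$.

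The main obstacle is the divisibility step: one must invoke the fact that $\nabla_{\id}$ vanishes on $\theta((\B_{\dR,F}^+)^{\pa}) \subseteq \hat{F}_\infty^{\la}$. In the one-dimensional cyclotomic setting this is immediate since $\hat{K_\infty}^{\la} = K_\infty$ is algebraic; in the higher-dimensional Lubin--Tate situation one must either invoke a sufficiently fine description of $\hat{F}_\infty^{\la}$ or argue directly that the reduction modulo $t_\pi$ of a pro-analytic vector of $\B_{\dR,F}^+$ is forced to land in the kernel of $\nabla_{\id}$.
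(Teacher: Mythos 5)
Your proposal is correct and follows the same outline as the paper's proof: apply $\theta$, use the vanishing of $\nabla_\id$ on $\hat{F_\infty}^{\la}$ to deduce that $\nabla_\id(x) \in t_\pi\Bdrplus$, and then divide by $t_\pi$ while preserving pro-analyticity. The only substantive remark is about the ``main obstacle'' you flag at the end: the vanishing of $\nabla_\id$ on $\hat{F_\infty}^{\la}$ is not open in the Lubin--Tate setting but is precisely \cite[Prop. 2.10]{Ber14MultiLa}, which the paper cites a few lines after this lemma in the form $\hat{F_\infty}^{\Sigma_0-\la}=\hat{F_\infty}^{\la}$ (recall $\Sigma_0$-vectors are those killed by $\nabla_\id$); you should invoke that result rather than treat the step as unresolved. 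For the last step, the paper argues more tersely than you do --- $t_\pi$ is a pro-analytic unit of $\B_{\dR,F}$, so $\nabla_\id(x)/t_\pi$ is pro-analytic there and, lying in $\Bdrplus$, is pro-analytic in $\B_{\dR,F}^+$ --- but your graded argument via the twisted isomorphisms $\B_{\dR,F}^+/t_\pi^n \xrightarrow{\sim} t_\pi\B_{\dR,F}^+/t_\pi^{n+1}$ is an equivalent and equally valid route.
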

\begin{proof}
Let $x \in (\B_{\dR,K}^+)^{\pa}$. Then $\theta(x) \in \hat{K_\infty}^{\la}$. Since $\nabla_\id = 0$ on $\hat{K_\infty}^{\la}$, we get that $\nabla_\id \circ \theta (x) = 0 = \theta \circ \nabla_\id(x)$ so that $\nabla_\id(x) \in t_\pi\Bdrplus$. Therefore, $\partial_\id(x) \in \Bdrplus$. Since $t_\pi$ is a pro-analytic vector of $\B_{\dR,K}$ and since $x \in (\B_{\dR,K}^+)^{\pa}$, we obtain $\partial_\id(x) \in (\B_{\dR,K})^{\pa}$. In order to conclude, we need to prove that $(\B_{\dR,K}^+)^{\pa} = (\B_{\dR,K})^{\pa} \cap \B_{\dR,K}^+$. But this is straightforward, because if $x$ is a pro-analytic vector of $(\B_{\dR,K})^{\pa}$ which belongs to $\B_{\dR,K}^+$ then the $\frac{\nabla_{\id}^k(x)}{k!}$ belong to $\B_{\dR,K}^+$ and thus $x \in (\B_{\dR,K}^+)^{\pa}$ by remark \ref{remark nabla Fla}.
\end{proof}

\begin{lemm}
We have $(\B_{\dR,K})^{\pa} = K_\infty(\!(t_\pi)\!)$ and $(\B_{\dR,K}^+)^{\pa} = K_\infty[\![t_\pi]\!]$.
\end{lemm}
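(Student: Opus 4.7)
The plan is to first establish the second equality, $(\B_{\dR,F}^+)^{F-\pa} = F_\infty[\![t_\pi]\!]$, and then deduce the first from it by inverting $t_\pi$. For the inclusion $F_\infty[\![t_\pi]\!] \subset (\B_{\dR,F}^+)^{F-\pa}$, I would observe that $F_\infty$ consists of $F$-locally analytic vectors since each element of $F_n$ is fixed by the open subgroup $\Gamma_n$. The element $t_\pi$ is $F$-pro-analytic: from $g(t_\pi) = \chi_\pi(g) t_\pi$ one reads off $\log g(t_\pi) = \log \chi_\pi(g) \cdot t_\pi$, which gives $\nabla_{\id}(t_\pi) = t_\pi$ and $\nabla_\tau(t_\pi) = 0$ for $\tau \in \Sigma_F \setminus \{\id\}$. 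Since $F$-pro-analytic vectors form a ring (an $F$-analytic analog of Lemma \ref{ringla}), every polynomial $\sum_{k=0}^{N-1} a_k t_\pi^k$ with $a_k \in F_\infty$ is $F$-pro-analytic. For a power series $x = \sum_{k \geq 0} a_k t_\pi^k$, its image in the Banach quotient $\B_m = \Btplus/\ker(\theta)^m\Btplus$ is a polynomial of degree $< m$ and hence $F$-locally analytic; by the very definition of the pro-analytic vectors in the Fréchet space $\Bdrplus = \varprojlim \B_m$, this shows $x$ is $F$-pro-analytic.

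For the reverse inclusion I would use an iterative $t_\pi$-adic expansion. The essential input is the identification $\hat{F_\infty}^{F-\la} = F_\infty$, which in the cyclotomic case over $\Qp$ reduces to classical Sen theory, and in the general Lubin-Tate case must be imported from the $F$-analytic Sen theory of Berger and Colmez. Granting this, let $x \in (\B_{\dR,F}^+)^{F-\pa}$ and set $x_0 = x$. Inductively, define $a_k := \theta(x_k)$ and $x_{k+1} := (x_k - a_k)/t_\pi$. Since $\theta$ is continuous and $\G_F$-equivariant, $\theta(x_k) \in \hat{F_\infty}^{F-\la} = F_\infty$ as soon as $x_k$ is $F$-pro-analytic; and since $t_\pi^{-1}$ is $F$-pro-analytic in $\B_{\dR,F}$ (by the same computation as above with opposite sign), the ring property shows that $x_{k+1} \in \B_{\dR,F}^+$ is still $F$-pro-analytic. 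By construction $x \equiv \sum_{k=0}^{N-1} a_k t_\pi^k \pmod{t_\pi^N \Bdrplus}$ for every $N$, so the power series $\sum_{k \geq 0} a_k t_\pi^k$ converges to $x$ in the $t_\pi$-adic (Fréchet) topology, giving $x \in F_\infty[\![t_\pi]\!]$.

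For the first equality $(\B_{\dR,F})^{F-\pa} = F_\infty(\!(t_\pi)\!)$, the inclusion $\supset$ follows from the $+$-case since $t_\pi^{-1}$ is $F$-pro-analytic, and the inclusion $\subset$ is obtained by choosing $N \geq 0$ with $t_\pi^N x \in \B_{\dR,F}^+$, noting that $t_\pi^N x$ remains $F$-pro-analytic (as $t_\pi$ is), and applying the $+$-case to conclude $t_\pi^N x \in F_\infty[\![t_\pi]\!]$. The main obstacle of the whole argument is therefore the input $\hat{F_\infty}^{F-\la} = F_\infty$: outside the cyclotomic setting this is a nontrivial statement that must be taken from the Lubin-Tate analog of Sen theory. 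Once this ingredient is in hand, the rest is a direct iterative computation using that $t_\pi$ and $t_\pi^{-1}$ are $F$-pro-analytic and that $F$-pro-analytic vectors form a ring.
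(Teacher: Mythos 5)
Your argument is correct, and it supplies an actual proof where the paper simply defers to \cite[Prop.~2.6]{Porat}. The structure — check that $F_\infty$ and $t_\pi$ are $F$-pro-analytic for the easy inclusion, then use the $\theta$-and-divide-by-$t_\pi$ iteration for the hard one, and finally pass to the field by multiplying by $t_\pi^N$ — is the standard dévissage and matches the spirit of the surrounding computations in the paper (compare the proof of Proposition~\ref{prop struc Bdrpa}, which performs the same kind of Taylor-expansion-in-$t_\pi$ of a pro-analytic element).

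Two points worth making explicit. First, the key input $\hat{F_\infty}^{F-\la}=F_\infty$ is in fact a short consequence of two facts the paper already uses: $\nabla_{\id}=0$ on $\hat{F_\infty}^{\la}$ (invoked in the proof of Lemma~\ref{lemm partial stab bdrplus}), so an $F$-locally analytic vector is killed by the whole Lie algebra, hence fixed by some $\Gamma_n$; and Tate's theorem $\hat{F_\infty}^{\Gamma_n}=F_n$. You flag it as an import from Sen theory, which is fine, but it is worth noticing that it reduces to these two ingredients rather than to the full strength of the Berger--Colmez structure theorem. Second, when you write $x_{k+1}=(x_k-a_k)/t_\pi$ and invoke ``the ring property,'' you are really using that the pro-analytic vectors in the LF-field $\B_{\dR,F}$ form a field (the Fréchet/LF analogue of Lemma~\ref{ringla}(2)), together with the observation $(\B_{\dR,F})^{\pa}\cap\B_{\dR,F}^+=(\B_{\dR,F}^+)^{\pa}$; both are true and routine, but spelling out that $\nabla_\tau(t_\pi x_{k+1})=t_\pi\nabla_\tau(x_{k+1})$ forces $\nabla_\tau(x_{k+1})=0$ (since $\Bdrplus$ is a domain) would make the step more transparent. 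With these glosses the proof is complete and self-contained.
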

\begin{proof}
See \cite[Prop. 2.6]{Porat}.
\end{proof}

We let $(\B_{\dR,K}^+)^{\Sigma_0-\pa}$ denote the set of $\Qp$-pro-analytic vectors of $\B_{\dR,K}^+$ which are killed by $\nabla_\id$. 

\begin{prop}\label{prop struc Bdrpa}
We have $(\B_{\dR,K}^+)^{\pa} = \left\{ \sum_{k \geq 0}a_kt_\pi^k, a_k \in (\B_{\dR,K}^+)^{\Sigma_0-\pa}\right\}$. 
\end{prop}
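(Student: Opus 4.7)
The inclusion $\supseteq$ is immediate: $t_\pi \in (\B_{\dR,F}^+)^{\pa}$ since $g(t_\pi) = \chi_\pi(g) t_\pi$, the space $(\B_{\dR,F}^+)^{\pa}$ is a ring, and any series $\sum_k a_k t_\pi^k$ with $a_k \in (\B_{\dR,F}^+)^{\Sigma_0-\pa}$ converges in the $t_\pi$-adic Fréchet topology since in each Banach quotient $\B_m = \B_{\dR,F}^+/t_\pi^m\B_{\dR,F}^+$ only finitely many terms survive, and pro-analyticity is preserved under finite sums.

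For the reverse inclusion, my plan is a Taylor-type expansion with respect to the derivation $\partial_\id = \nabla_\id/t_\pi$. By lemma \ref{lemm partial stab bdrplus}, $\partial_\id$ preserves $(\B_{\dR,F}^+)^{\pa}$, and by construction $\nabla_\id = t_\pi \partial_\id$ on pro-analytic vectors, whence $\nabla_\id(\partial_\id^n(x)) = t_\pi \partial_\id^{n+1}(x)$ for every $x \in (\B_{\dR,F}^+)^{\pa}$ and every $n \geq 0$. Given such an $x$, I set
$$a_k := \frac{1}{k!} \sum_{n \geq 0} \frac{(-t_\pi)^n}{n!}\, \partial_\id^{n+k}(x),$$
which converges in the $t_\pi$-adic topology because modulo $t_\pi^m$ only the terms with $n < m$ survive; this same truncation presents $a_k \bmod t_\pi^m$ as a finite sum of locally analytic vectors in $\B_m$, so $a_k \in (\B_{\dR,F}^+)^{\pa}$.

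To verify that $a_k \in (\B_{\dR,F}^+)^{\Sigma_0-\pa}$, I apply $\nabla_\id$ termwise, which is legitimate since $\nabla_\id$ preserves the $t_\pi$-adic filtration and one may work in each $\B_m$. Using
$$\nabla_\id\bigl(t_\pi^n \partial_\id^{n+k}(x)\bigr) = n\, t_\pi^n \partial_\id^{n+k}(x) + t_\pi^{n+1}\partial_\id^{n+k+1}(x),$$
and reindexing the second family with $m = n+1$, the contributions at index $m \geq 1$ appear with coefficients $\tfrac{(-1)^m}{(m-1)!}$ and $\tfrac{(-1)^{m-1}}{(m-1)!}$, so they cancel pairwise and $\nabla_\id(a_k) = 0$. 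Finally, $x = \sum_k a_k t_\pi^k$ is a formal rearrangement: interchanging the sums (valid in each $\B_m$) and setting $m = n+k$, the coefficient of $\partial_\id^m(x)\, t_\pi^m/m!$ is $\sum_{n=0}^m \binom{m}{n}(-1)^n$, which vanishes unless $m=0$ and equals $x$ at $m=0$.

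The principal obstacle is the telescoping verification of $\nabla_\id(a_k)=0$: while the algebraic cancellation is clean once the identity $\nabla_\id \partial_\id^n = t_\pi \partial_\id^{n+1}$ is in hand, one needs to justify exchanging $\nabla_\id$ with the convergent sum, which is precisely where one exploits both the $t_\pi$-adic completeness of $\B_{\dR,F}^+$ and the compatibility of $\nabla_\id$ with its filtration; everything else either follows from lemma \ref{lemm partial stab bdrplus} or reduces to finite manipulations in the Banach quotients $\B_m$.
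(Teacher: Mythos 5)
Your proof is correct and follows essentially the same route as the paper: the key formula $a_k = \frac{1}{k!}\sum_{n\ge 0}\frac{(-t_\pi)^n}{n!}\partial_\id^{n+k}(x)$ is identical (up to relabeling) to the paper's definition of the $x_i$, and both arguments rest on lemma \ref{lemm partial stab bdrplus} for stability of $(\B_{\dR,F}^+)^{\pa}$ under $\partial_\id$, plus $t_\pi$-adic convergence in the Banach quotients. The only (cosmetic) difference is that you verify $\nabla_\id(a_k)=0$ explicitly by the telescoping computation, whereas the paper states the equivalent $\partial_\id(x_i)=0$ as a ``simple computation''; since $\nabla_\id = t_\pi\partial_\id$ and $t_\pi$ is not a zero divisor in $\B_{\dR,F}^+$, these are the same condition.
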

\begin{proof}
Let $x \in (\B_{\dR,K}^+)^{\pa}$. For $i \geq 0$, we let $x_i = \frac{1}{i!}\sum_{k \geq 0}(-1)^k\frac{\partial_\id^{i+k}(x)}{k!}t_\pi^k$. By lemma \ref{lemm partial stab bdrplus}, we have that for any $i,k \geq 0$, $\partial_\id^{i+k}(x)$ belongs to $(\B_{\dR,K}^+)^{\pa}$ so that the sum $\frac{1}{i!}\sum_{k \geq 0}(-1)^k\frac{\partial_\id^{i+k}(x)}{k!}t_\pi^k$ converges in $(\B_{\dR,K}^+)^{\pa}$ to an element $x_i$ such that $\partial_\id(x_i)=0$.

The sum $\sum_{i \geq 0}x_it_\pi^i$ converges in $(\B_{\dR,K}^+)^{\pa}$ and a simple computation shows that $x=\sum_{i \geq 0}x_it_\pi^i$.

Conversely, it is easy to check that if $(a_k)_{k \geq 0}$ is a sequence of elements of $(\B_{\dR,K}^+)^{\Sigma_0-\pa}$, the sum $\sum_{k \geq 0}a_kt_\pi^k$ converges to an element of $(\B_{\dR,K}^+)^{\pa}$.
\end{proof}

\begin{lemm}\label{lemm div tp implies 0}
Let $x \in (\B_{\dR,K}^+)^{\Sigma_0-\pa}$ such that $t_\pi | x$ in $\Bdrplus$. Then $x=0$.
\end{lemm}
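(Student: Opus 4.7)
The plan is to show by induction on $m \geq 1$ that $x \in t_\pi^m \Bdrplus$; since $\Bdrplus$ is $t_\pi$-adically separated this forces $x = 0$. The case $m = 1$ is the hypothesis. For the inductive step, suppose $x = t_\pi^m y$ with $y \in \B_{\dR,F}^+$, and one aims to show $\theta(y) = 0$, which gives $y \in t_\pi \Bdrplus$ and hence $x \in t_\pi^{m+1}\Bdrplus$.

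The preliminary observation, to be established first, is that $y$ is itself pro-analytic. For each $k \geq 1$, multiplication by $t_\pi^m$ induces an injective continuous Banach space map $\B_k \hookrightarrow \B_{k+m}$, whose image is the closed subspace $t_\pi^m \B_{k+m}$ and which intertwines the $\G_F$-action up to the locally analytic character $\chi_\pi^m$. The pro-analyticity of $x$ says that the image of $x$ in $\B_{k+m}^{H_F}$ is locally analytic; twisting by $\chi_\pi^{-m}$ and inverting multiplication by $t_\pi^m$ (both locally analytic operations on a small enough open subgroup of $\Gamma_F$) then shows that the image of $y$ in $\B_k^{H_F}$ is locally analytic for every $k$, whence $y$ is pro-analytic.

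Now apply $\nabla_\id$ to $x = t_\pi^m y$. Since $g(t_\pi) = \chi_\pi(g) t_\pi$ for all $g \in \Gamma_F$, we have $\nabla_\id(t_\pi) = t_\pi$, so the Leibniz rule yields
\[
0 = \nabla_\id(x) = \nabla_\id(t_\pi^m y) = m t_\pi^m y + t_\pi^m \nabla_\id(y),
\]
and hence $m y + \nabla_\id(y) = 0$ in $\B_{\dR,F}^+$. Apply $\theta$: using that $\theta$ is $\G_F$-equivariant, so it commutes with $\nabla_\id$ on pro-analytic vectors, together with the fact that $\nabla_\id$ vanishes on $\hat{F_\infty}^{\la}$ (as recalled in the proof of Lemma~\ref{lemm partial stab bdrplus}), we obtain $m \theta(y) + \nabla_\id(\theta(y)) = m\theta(y) = 0$. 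Since $m \geq 1$ and we are in characteristic zero, $\theta(y) = 0$, completing the induction.

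The main obstacle is the preliminary observation that $y$ is pro-analytic: the definition of pro-analyticity involves the images in each layer $\B_k$ separately, and one must carefully transfer local analyticity through the twisted injection $\B_k \hookrightarrow \B_{k+m}$ given by multiplication by $t_\pi^m$. Once this is in hand, the computation with the Lie algebra operator $\nabla_\id$ and the $\theta$ map is purely mechanical.
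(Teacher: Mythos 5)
Your proof is correct, and it takes a genuinely different route from the paper's. The paper argues by contradiction: it chooses $k$ to be the exact $t_\pi$-adic valuation of $x$, writes $x = t_\pi^k\alpha$, and then invokes Proposition~\ref{prop struc Bdrpa} to expand $\alpha = \sum_{j\geq 0} a_j t_\pi^j$ with $a_j$ killed by $\nabla_\id$; applying $\partial_\id^k$ to the relation $\sum_j (k+j) a_j t_\pi^{k+j}=0$ and reducing modulo $t_\pi$ yields $\theta(a_0)=0$, whence $t_\pi\mid\alpha$, contradiction. Your argument is an induction that avoids the structure theorem entirely: after dividing by $t_\pi^m$ you use the Leibniz rule for $\nabla_\id$ together with $\nabla_\id(t_\pi)=t_\pi$ to get $my+\nabla_\id(y)=0$, then apply $\theta$ (continuous, $\G_F$-equivariant, hence commuting with $\nabla_\id$) and use $\nabla_\id=0$ on $\hat F_\infty^{\la}$ to conclude $\theta(y)=0$. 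This trades the expansion into a $t_\pi$-power series for a single application of $\theta$, which is cleaner and more self-contained: it only requires the computation in the proof of Lemma~\ref{lemm partial stab bdrplus}, not the subsequent Proposition~\ref{prop struc Bdrpa}. You also give a more careful justification than the paper does for the preliminary fact that $y=x/t_\pi^m$ is pro-analytic (the paper disposes of this in one sentence), via the closed $\chi_\pi^m$-twisted injection $\B_k\hookrightarrow\B_{k+m}$; that level of detail is welcome. The one point worth flagging is that you should explicitly note $y\in\B_{\dR,F}^+$, i.e.\ $y$ is $H_F$-invariant — this follows since $x$ and $t_\pi^m$ are both $H_F$-invariant and $\Bdrplus$ is a domain — as this is needed before one can speak of pro-analyticity of $y$ for $\Gamma_F$.
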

\begin{proof}
Let $x \in (\B_{\dR,F}^+)^{\Sigma_0-\pa}$ such that $t_\pi | x$, and assume that $x \neq 0$. We can therefore write $x = t_\pi^k\alpha$ with $k \geq 1$, $\alpha \in \Bdrplus$ and $t_\pi$ does not divide $\alpha$ in $\Bdrplus$. Moreover, since $t_\pi$ is pro-analytic for the action of $\Gamma_K$, we get that $\alpha$ is pro-analytic for the action of $\Gamma_K$. 

By proposition \ref{prop struc Bdrpa}, we can write $\alpha = \sum_{j \geq 0}a_jt_\pi^j$ where the $a_j$ are elements of $(\B_{\dR,K}^+)^{\pa}$ killed by $\nabla_\id$. The fact that $x$ is killed by $\nabla_\id$ translates into
$$\sum_{j \geq 0}(k+j)a_jt_\pi^{k+j} = 0.$$
Applying $\partial_\id^k$ to this equality and reducing mod $t_\pi$, we obtain that $a_0 = 0 \mod t_\pi$ and thus $t_\pi | \alpha$, which is not possible.
\end{proof}

\begin{coro}\label{coro inj map}
For any $N \geq 1$, the map $\theta_N: (\B_{\dR,K}^+)^{\Sigma_0-\pa} \ra (\B_{\dR,K}^+/t_\pi^N\B_{\dR,K}^+)^{\Sigma_0-\pa}$ is injective.
\end{coro}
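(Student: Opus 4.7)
The statement is an essentially immediate consequence of the preceding lemma, so my plan is to reduce directly to it. The map $\theta_N$ is just the natural reduction modulo $t_\pi^N$, so its kernel on $(\B_{\dR,F}^+)^{\Sigma_0-\pa}$ consists of those $x \in (\B_{\dR,F}^+)^{\Sigma_0-\pa}$ that are divisible by $t_\pi^N$ in $\Bdrplus$.

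Suppose $x$ lies in this kernel. Since $N \geq 1$, divisibility by $t_\pi^N$ in $\Bdrplus$ implies divisibility by $t_\pi$ in $\Bdrplus$. Thus $x \in (\B_{\dR,F}^+)^{\Sigma_0-\pa}$ and $t_\pi \mid x$ in $\Bdrplus$, which are exactly the hypotheses of Lemma \ref{lemm div tp implies 0}. That lemma forces $x = 0$, and hence $\theta_N$ is injective.

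So in my write-up the only step is: given $x$ in the kernel of $\theta_N$, observe $x \in t_\pi^N \Bdrplus \subset t_\pi \Bdrplus$, and apply Lemma \ref{lemm div tp implies 0} to conclude $x=0$. There is no serious obstacle here; the substance of the argument was done in the previous lemma, where pro-analyticity of $\alpha = x/t_\pi^k$ (inherited from that of $x$ via pro-analyticity of $t_\pi$), together with the structure result from Proposition \ref{prop struc Bdrpa} and the fact that $\nabla_\id(x)=0$, was used to derive a contradiction with $t_\pi \nmid \alpha$. The corollary just packages the $N=1$ case to cover arbitrary $N$.
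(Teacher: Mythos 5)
Your proof is correct and is exactly the argument the paper has in mind: the corollary is stated without a separate proof precisely because it reduces, as you observe, to noting that an element of the kernel of $\theta_N$ is divisible by $t_\pi$ and invoking Lemma \ref{lemm div tp implies 0}. Nothing to add.
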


Note that $(\B_{\dR,K}^+/t_\pi\B_{\dR,K}^+)^{\Sigma_0-\pa} = \hat{K_\infty}^{\Sigma_0-\la}$ and that $\hat{K_\infty}^{\Sigma_0-\la} = \hat{K_\infty}^{\Qp-\la}$ by \cite[Prop. 2.10]{Ber14MultiLa}. Note that this also implies that for any $m \geq 0$, the natural map $\theta: (\B_{\dR,K}^+)^{\Qp-\Gamma_m-\an,\Sigma_0-\pa} \ra \hat{K_\infty}^{\Qp-\Gamma_m-\an}$  is injective. By the same argument as in the proof of the surjectivity in \cite[Thm. 6.2]{PoratFF}, the map $\theta : (\B_{\dR,K}^+)^{\Qp-\pa} \ra \hat{K_\infty}^{\Qp-\la}$ is surjective. In particular, using proposition \ref{prop struc Bdrpa}, we get the following ``description'' of $(\B_{\dR,K}^+)^{\Qp-\pa}$:

\begin{prop}
\label{prop struc complete Bdrpa}
The natural map $x \in (\B_{\dR,K}^+)^{\Qp-\pa} \mapsto \sum_{i \geq 0}\theta(x_i)t_\pi^i$, where $x_i = \frac{1}{i!}\sum_{k \geq 0}(-1)^k\frac{\partial_\id^{i+k}(x)}{k!}t_\pi^k$, induces a $\Gamma_K$-equivariant isomorphism from $(\B_{\dR,K}^+)^{\Qp-\pa}$ to $\hat{K_\infty}^{\Qp-\la}[\![t_\pi]\!]$.
\end{prop}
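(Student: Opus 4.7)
The plan is to combine Proposition \ref{prop struc Bdrpa} with the injectivity of $\theta$ on $(\B_{\dR,F}^+)^{\Sigma_0-\pa}$ provided by Corollary \ref{coro inj map} (the case $N=1$) and the surjectivity of $\theta : (\B_{\dR,F}^+)^{\pa} \to \hat{F_\infty}^{\la}$ recalled from \cite{PoratFF}. Denote the candidate map by $\Phi$. By the proof of Proposition \ref{prop struc Bdrpa}, the coefficients $x_i$ appearing in the formula lie in $(\B_{\dR,F}^+)^{\Sigma_0-\pa}$ and satisfy $x = \sum_{i \geq 0} x_i t_\pi^i$, so $\theta(x_i) \in \hat{F_\infty}^{\la}$ and $\Phi$ lands in $\hat{F_\infty}^{\la}[\![t_\pi]\!]$. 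I would first establish that the decomposition $x = \sum x_i t_\pi^i$ is \emph{unique}: if $\sum (x_i - x_i') t_\pi^i = 0$ with both families in $(\B_{\dR,F}^+)^{\Sigma_0-\pa}$, then $x_0 - x_0'$ is divisible by $t_\pi$ in $\Bdrplus$ and hence zero by Lemma \ref{lemm div tp implies 0}, after which one divides by $t_\pi$ and iterates. Uniqueness forces $(g \cdot x)_i = \chi_\pi(g)^i g(x_i)$, since the expansion $g(\sum x_i t_\pi^i) = \sum g(x_i) \chi_\pi(g)^i t_\pi^i$ already has the required form; combined with the $\G_F$-equivariance of $\theta$ and $g(t_\pi) = \chi_\pi(g) t_\pi$, this yields the $\Gamma_F$-equivariance of $\Phi$.

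Injectivity of $\Phi$ is then immediate: if $\Phi(x) = 0$, each $\theta(x_i)$ vanishes, so $x_i \in (\B_{\dR,F}^+)^{\Sigma_0-\pa}$ is divisible by $t_\pi$ in $\Bdrplus$ and Lemma \ref{lemm div tp implies 0} forces $x_i = 0$, whence $x = 0$.

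For surjectivity, given $\sum_{i \geq 0} b_i t_\pi^i \in \hat{F_\infty}^{\la}[\![t_\pi]\!]$, I would lift each $b_i$ to some $y_i \in (\B_{\dR,F}^+)^{\pa}$ using the surjectivity of \cite{PoratFF}, then apply Proposition \ref{prop struc Bdrpa} to write $y_i = \sum_j y_{i,j} t_\pi^j$ and set $x_i := y_{i,0} \in (\B_{\dR,F}^+)^{\Sigma_0-\pa}$, which satisfies $\theta(x_i) = b_i$. It then remains to form $x := \sum_i x_i t_\pi^i$ and check that $x$ lies in $(\B_{\dR,F}^+)^{\pa}$. Convergence in $\Bdrplus$ for the $t_\pi$-adic topology is automatic, and pro-analyticity is verified layer by layer using the Fréchet structure $\B_{\dR,F}^+ = \varprojlim_m \B_m$: modulo $t_\pi^m$ the sum truncates to the finite sum $\sum_{i < m} x_i t_\pi^i$, whose image in the Banach quotient $\B_m$ is locally analytic as a finite combination of images of pro-analytic vectors. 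Uniqueness of the decomposition then guarantees that the coefficients recovered by the formula in the statement agree with our chosen $x_i$, so $\Phi(x) = \sum b_i t_\pi^i$.

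The main obstacle is precisely this last surjectivity step: one must ensure that the lift $\sum x_i t_\pi^i$ really belongs to the pro-analytic vectors and not merely to $\Bdrplus$, which is what forces the reduction of pro-analyticity to each Banach quotient $\B_m$, where only finitely many terms of the series survive.
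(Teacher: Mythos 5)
Your proof is correct and follows the same strategy as the paper: injectivity via Lemma \ref{lemm div tp implies 0} (through Corollary \ref{coro inj map}), and surjectivity by lifting $\hat{F_\infty}^{\la}$-coefficients to $(\B_{\dR,F}^+)^{\Sigma_0-\pa}$ using the surjectivity of $\theta$ from \cite{PoratFF} together with the decomposition of Proposition \ref{prop struc Bdrpa}. You are more careful than the paper's (very terse) proof about the uniqueness of the $t_\pi$-expansion, the $\Gamma_F$-equivariance, and the pro-analyticity of the lifted series $\sum x_i t_\pi^i$ — all points the paper leaves implicit in the converse direction of Proposition \ref{prop struc Bdrpa}.
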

\begin{proof}
We already know from the above that the map $x \in (\B_{\dR,K}^+)^{\Qp-\pa} \mapsto \sum_{i \geq 0}\theta(x_i)t_\pi^i$ is injective. To prove that it is surjective, recall that the map $\theta: (\B_{\dR,K}^+)^{\Qp-\pa} \ra \hat{K_\infty}^{\Qp-\la}$ is surjective. If $y \in \hat{K_\infty}^{\Qp-\la}$, let $x \in (\B_{\dR,K}^+)^{\Qp-\pa}$ such that $\theta(x)=y$. One can write $x=\sum_{i \geq 0}x_it_\pi^i$ with $\partial_\id(x_i)=0$ for all $i$, and thus $x_0$ satisfies $\theta(x_0)=\theta(x)=y$ and $\partial(x_0)=0$, so that the map above is injective.
\end{proof}

\begin{rema}
We have $\B_{\dR,K}^+ \simeq \hat{K_\infty}[\![t_\pi]\!]$ noncanonically but this isomorphism is not $\Gamma_K$-equivariant. However, taking only the $\Qp$-pro-analytic vectors on both sides gives us a canonical isomorphism which is $\Gamma_K$-equivariant.
\end{rema}

\subsection{The modules $\D_\dif^+(V)$}

When $K_\infty/K$ is the cyclotomic extension of $K$, Fontaine has proven in \cite{fontaine2004arithmetique} that the set of sub-$K_\infty[\![t]\!]$-modules free of finite type of $(\Bdrplus\otimes_{\Qp}V)^{H_K}$ and stable by the action of $\Gamma_K$ admits a maximal element, usually denoted by $\D_{\dif}^+(V)$, and which is such that $\Bdrplus \otimes_{K_\infty[\![t]\!]}\D_{\dif}^+(V) = \Bdrplus \otimes_{\Qp}V$. 

If $\gamma \in \Gamma_K$ is close enough to $1$, then the power series defining $\log(\gamma)$ converges as a power series of $\Qp$-linear operators of $\D_{\dif}^+(V)$, and the operator $\nabla_V = \frac{\log(\gamma)}{\log(\chi_\cycl(\gamma))}$ does not depend on the choice of $\gamma$ and satisfies the Leibniz rule $\nabla_V(\lambda\cdot x)= \lambda \nabla_V(x)+\nabla(\lambda)x$ for all $\lambda \in K_\infty[\![t]\!]$ and $x \in \D_{\dif}^+(V)$. The map $\theta : \Bdrplus \to \Cp$ induces a surjective morphism of modules with connexions $(\D_\dif^+(V),\nabla_V) \ra (\D_{\Sen}(V),\Theta_V)$ (see for example \cite[\S 5.3]{Ber02}). 

The map $\iota_n : \Bt_{\rig}^{\dagger,r_n} \ra \Bdrplus$ sends $\B_K^{\dagger,r_n}$ into $K_n[\![t]\!] \subset \Bdrplus$ and $\D^{\dagger,r_n}(V)$ in a sub-$K_n[\![t]\!]$-module of $\D_{\dif}^+(V)$, and we let $\D_{\dif,n}^+(V):= K_n[\![t]\!] \otimes_{\iota_n(\B_K^{\dagger,r_n})}\iota_n(\D^{\dagger,r_n}(V))$. Proposition 5.7 of \cite{Ber02} shows that $\D_{\dif}^+(V) = K_\infty[\![t]\!]\otimes_{K_n[\![t]\!]}\D_{\dif,n}^+(V)$ for $n \gg 0$. 

The fact that one could retrieve the modules $\D_{\dif}^+(V)$ and $\D_{\dif,n}^+(V)$ using the theory of locally analytic vectors had already been noticed by Berger and Colmez \cite[Rem. 3.3]{Ber14SenLa} and proven by Porat in \cite[Prop. 3.3]{Porat} and \cite[Thm. 6.2]{PoratFF} but we now explain how this incorporates into the setting laid out at the end of \S 3. 

Note that $\Bdrplus$, endowed with its natural topology, is not a Banach ring but a Fréchet ring, and as Berger points out in \cite{Ber14MultiLa}, locally analytic vectors in the setting of Fréchet spaces usually have to be replaced with the weaker notion of pro-analytic vectors, because the resulting objects are too small in general. However, in the setting of $\Bdrplus$ and $\D_\dif^+(V)$, locally analytic vectors are actually sufficient to recover the theory.

\begin{lemm}
\label{lemma Fla and Fpa in Bdrplus}
We have $(\B_{\dR,K}^+)^{\Gamma_n-\an} = K_n[\![t_\pi]\!]$, $(\B_{\dR,K}^+)^{\la} = \bigcup_nK_n[\![t_\pi]\!]$.
\end{lemm}
\begin{proof}
The second equality follows directly from the first one. For the first equality, take $x \in (\B_{\dR,K}^+)^{\Gamma_n-\an}$. We have $\theta(x) \in \hat{K_\infty}^{\Gamma_n-\an} = K_n$ by \cite[Coro. 4.8]{Ber14SenLa}, so that we can write $x = x_0+t_{\pi}y$, with $x_0 \in K_n$ and $y \in \B_{\dR,K}^+$, and one checks that $y$ is $\Gamma_n$-analytic because $x$, $x_0$ and $t_\pi$ are. By induction, $x \in K_n[\![t_\pi]\!]$. Because $K_n \subset (\B_{\dR,K}^+)^{\Gamma_n-\an}$ and because $t_\pi$ is $\Gamma_0$-analytic, we have $K_n[\![t_\pi]\!] \subset (\B_{\dR,K}^+)^{\Gamma_n-\an}$, which finishes the proof.
\end{proof}

\begin{prop}
\label{D_difn=Gamma_nan}
For $n \gg 0$, we have $\D_{\dif,n}^+(V) = ((\Bdrplus \otimes_{\Qp}V)^{H_K})^{\Gamma_n-\an}$.
\end{prop}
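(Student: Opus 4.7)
The plan is to identify $W := (\Bdrplus \otimes_{\Qp} V)^{H_K}$ as a free $\B_{\dR,K}^+$-module of rank $d = \dim_{\Qp} V$ with a basis coming from $\D_{\dif,n}^+(V)$, and then to apply Proposition \ref{lainla and painpa} (level by level on the Banach quotients $\B_m$ of $\Bdrplus$) together with the preceding lemma, which identifies $(\B_{\dR,K}^+)^{\Gamma_n-\an} = K_n[\![t]\!]$. Concretely, the identification $\D_{\dif}^+(V) = K_\infty[\![t]\!] \otimes_{K_n[\![t]\!]} \D_{\dif,n}^+(V)$ from \cite[Prop. 5.7]{Ber02} extends by continuity to an isomorphism $W \simeq \B_{\dR,K}^+ \otimes_{K_n[\![t]\!]} \D_{\dif,n}^+(V)$, so any $K_n[\![t]\!]$-basis of $\D_{\dif,n}^+(V)$ is also a $\B_{\dR,K}^+$-basis of $W$.

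The key step, and main obstacle, is showing that for $n$ large enough one can choose a basis $e_1, \ldots, e_d$ of $\D_{\dif,n}^+(V)$ such that $g \mapsto \Mat(g)$ is a $\Gamma_n$-analytic (in the Fréchet sense, pro-analytic at each level $\B_m$) function from $\Gamma_n$ to $\GL_d(K_n[\![t]\!])$. I would obtain this by first choosing $n$ large enough so that $\D^{\dagger,r_n}(V)$ admits a basis on which the matrix of $\Gamma_n$ has entries in $\B_K^{\dagger,r_n}$ that are $\Gamma_n$-analytic; this relies on the standard estimates that the action of $\Gamma_n$ on $v_K$ (and hence on $\B_K^{\dagger,r_n}$) becomes arbitrarily close to the identity as $n \to \infty$, so that the matrix coefficients converge as required. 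Transporting this basis through the $\Gamma$-equivariant, continuous map $\iota_n : \B_K^{\dagger,r_n} \to K_n[\![t]\!]$ yields the desired basis of $\D_{\dif,n}^+(V)$, and analyticity is preserved since $\iota_n$ is continuous and $K_n[\![t]\!] \subset (\B_{\dR,K}^+)^{\Gamma_n-\an}$ by the preceding lemma.

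Once such a basis is in hand, applying Proposition \ref{lainla and painpa} (in its Fréchet/pro-analytic version, refined to track the subgroup $\Gamma_n$ by working in each Banach quotient $\B_m$ and passing to the inverse limit) yields
\begin{equation*}
W^{\Gamma_n-\an} = \bigoplus_{i=1}^d (\B_{\dR,K}^+)^{\Gamma_n-\an} \cdot e_i = \bigoplus_{i=1}^d K_n[\![t]\!] \cdot e_i = \D_{\dif,n}^+(V),
\end{equation*}
where the middle equality uses the preceding lemma. This gives both inclusions at once.

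The hard part is really the first step: verifying that the action of $\Gamma_n$ on a well-chosen basis of $\D^{\dagger,r_n}(V)$, and hence of $\D_{\dif,n}^+(V)$, is analytic for $n$ sufficiently large. This is a quantitative statement about the $(\phi,\Gamma)$-module structure that is essentially classical (cf. the arguments in \cite{Ber02}), and the threshold $n$ depends on $V$. Everything else is a formal application of Proposition \ref{lainla and painpa} and the lemma above, together with the structural identification of $W$ as a free $\B_{\dR,K}^+$-module obtained by extension of scalars from $\D_{\dif,n}^+(V)$.
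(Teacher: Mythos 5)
Your overall strategy matches the paper's: reduce the statement to showing that $\iota_n(\D^{\dagger,r_n}(V))$ consists of $\Gamma_n$-analytic elements, then conclude using the identification $(\B_{\dR,K}^+)^{\Gamma_n-\an}=K_n[\![t]\!]$ and the formal decomposition of Proposition \ref{lainla and painpa}. You correctly identify this analyticity statement as the crux and you correctly observe that everything else is routine.

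However, your treatment of that crux has a genuine gap. You write that ``for $n$ large enough $\D^{\dagger,r_n}(V)$ admits a basis on which the matrix of $\Gamma_n$ has entries in $\B_K^{\dagger,r_n}$ that are $\Gamma_n$-analytic,'' justified by the action of $\Gamma_n$ on $v_K$ getting close to the identity. The difficulty is that \emph{both} the group $\Gamma_n$ \emph{and} the Banach norm (coming from the radius $r_n$, hence from $\B_K^{[r_n;r_n]}$) change with $n$, and it is not at all clear that a fixed basis of $\D^{\dagger,r_m}(V)$, viewed in $\D^{[r_n;r_n]}(V)$ for growing $n$, satisfies the required uniform estimate: the analyticity bound needed to be $\Gamma_n$-analytic competes with a Banach norm that is also shrinking, and one cannot simply ``let $n$ grow.'' The paper resolves this by a Frobenius transport: fix $m$ with a basis $e_1,\dots,e_d$ of $\D^{\dagger,r_m}(V)$, check that these are $\Gamma_n$-analytic for $n\gg m$ \emph{in the fixed space} $\D^{[r_m;r_m]}(V)$ (where the Banach norm does not change), and then apply $\phi^{n-m}$ to produce a basis $u_i=\phi^{n-m}(e_i)$ of $\D^{\dagger,r_n}(V)$ living at radius $r_n$; Lemma 2.2 of \cite{Ber14SenLa} guarantees that this Frobenius twist preserves the analyticity needed, and one then composes with $\iota_n$. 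This Frobenius maneuver is exactly the mechanism that simultaneously scales the radius and controls the analyticity level, and your argument omits it. Without it, the claim you reduce to is not justified, and referring to ``standard estimates \`a la \cite{Ber02}'' does not close the gap, since those estimates live at a fixed radius and the passage between radii is precisely where the work is.
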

\begin{proof}
Since $(\B_{\dR,K}^+)^{\Gamma_n-\an} = K_n[\![t]\!]$, it suffices to prove that the elements of $\iota_n(\D^{\dagger,r_n}(V))$ are $\Gamma_n$-analytic for $n \gg 0$. 

Let $m \geq 0$ be such that $\D^{\dagger,r_m}(V)$ has the right dimension, and let $e_1,\cdots,e_d$ be a basis of $\D^{\dagger,r_m}(V)$. We can see the elements of $\D^{\dagger,r_m}(V)$ as elements of $\D^{[r_m;r_m]}(V)$. By \S 2.1 of \cite{KR09}, these elements are $\Gamma_n$-analytic for $n \gg m$ big enough. A direct consequence of lemma 2.2 of \cite{Ber14SenLa} shows that if we let $u_i=\phi^{n-m}(e_i)$, $1 \leq i \leq d$, then the $u_i$ are $\Gamma_n$-analytic as elements of $\D^{[r_n;r_n]}(V)$, and we know that it is a basis of $\D^{\dagger,r_n}(V)$ (since $\phi^*(\D^\dagger(V)) \simeq \D^{\dagger}(V)$) and thus of $\D^{[r_n;r_n]}(V)$. Therefore, $(\iota_n(u_1),\cdots,\iota_n(u_d))$ generates $\D_{\dif,n}^+(V)$, and forms a basis of $\Gamma_n$-analytic elements of $\D_{\dif,n}^+(V)$.
\end{proof}

\begin{prop}
\label{Ddif=Ddrrpa}
We have $\D_{\dif}^+(V) = ((\Bdrplus \otimes_{\Qp}V)^{H_K})^{\pa}$.
\end{prop}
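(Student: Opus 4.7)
The plan is to deduce this from the previous proposition \ref{D_difn=Gamma_nan} together with proposition \ref{lainla and painpa} applied in the Fréchet setting, using the identification $(\B_{\dR,K}^+)^{\pa} = K_\infty[\![t]\!]$ obtained in the preceding lemma.

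More concretely, fix $n$ large enough so that proposition \ref{D_difn=Gamma_nan} applies: there exists a basis $e_1,\ldots,e_d$ of $\D_{\dif,n}^+(V)$ over $K_n[\![t]\!]$ consisting of $\Gamma_n$-analytic elements of $W := (\Bdrplus \otimes_{\Qp}V)^{H_K}$. First I would check that $(e_1,\ldots,e_d)$ is in fact a $\B_{\dR,K}^+$-basis of $W$. This follows from the fact that $(e_i)$ is a $\Bdrplus$-basis of $\Bdrplus \otimes_{\Qp}V$ (after extending scalars via $\Bdrplus \otimes_{K_n[\![t]\!]}\D_{\dif,n}^+(V) = \Bdrplus \otimes_{\Qp}V$), combined with the fact that the $e_i$ are $H_K$-invariant and $(\Bdrplus)^{H_K} = \B_{\dR,K}^+$; so taking $H_K$-invariants yields $W = \bigoplus_{i=1}^d \B_{\dR,K}^+ \cdot e_i$.

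Next, since $e_i \in W^{\Gamma_n-\an}$, the matrix $g \mapsto \Mat(g)$ in the basis $(e_i)$ has coefficients in $(\B_{\dR,K}^+)^{\Gamma_n-\an} = K_n[\![t]\!]$ (preceding lemma), and in particular those coefficients are pro-analytic vectors of $\B_{\dR,K}^+$. The map $\Gamma_K \to \GL_d(\B_{\dR,K}^+)$ is therefore pro-analytic in the sense of the Fréchet half of proposition \ref{lainla and painpa}, which then gives
\[
W^{\pa} \;=\; \bigoplus_{i=1}^d (\B_{\dR,K}^+)^{\pa} \cdot e_i.
\]
Invoking the lemma $(\B_{\dR,K}^+)^{\pa} = K_\infty[\![t]\!]$, the right-hand side equals $K_\infty[\![t]\!] \otimes_{K_n[\![t]\!]} \D_{\dif,n}^+(V)$, which by the defining relation $\D_{\dif}^+(V) = K_\infty[\![t]\!] \otimes_{K_n[\![t]\!]} \D_{\dif,n}^+(V)$ (proposition 5.7 of \cite{Ber02}) is exactly $\D_{\dif}^+(V)$.

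The only mildly delicate point is justifying that the $\Gamma_n$-analytic basis $(e_i)$ of $\D_{\dif,n}^+(V)$ really is a $\B_{\dR,K}^+$-basis of the whole $W$; I would argue this by observing that the inclusion $\B_{\dR,K}^+ \otimes_{K_n[\![t]\!]} \D_{\dif,n}^+(V) \hookrightarrow W$ is surjective after extending scalars to $\Bdrplus$ (since both sides become $\Bdrplus \otimes_{\Qp} V$), and $\B_{\dR,K}^+ \to \Bdrplus$ is faithfully flat. Once this is in place, both inclusions $\D_{\dif}^+(V) \subset W^{\pa}$ and $W^{\pa} \subset \D_{\dif}^+(V)$ follow directly from the displayed equality.
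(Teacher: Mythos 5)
Your proof is correct in structure and carries out exactly what the paper compresses into the phrase ``also follows from the previous proposition'' (the paper additionally cites Porat's Prop.~3.3, but that is a reference, not an argument). The decomposition $W=\bigoplus_i\B_{\dR,K}^+\cdot e_i$ obtained from a $\Gamma_n$-analytic basis of $\D_{\dif,n}^+(V)$, followed by proposition~\ref{lainla and painpa} and the computation $(\B_{\dR,K}^+)^{\pa}=K_\infty[\![t]\!]$, is the right chain of reductions, and the justification that $(e_i)$ is actually a $\B_{\dR,K}^+$-basis of $W$ (taking $H_K$-invariants in $\bigoplus_i\Bdrplus\cdot e_i$, or equivalently by faithful flatness of $\B_{\dR,K}^+\to\Bdrplus$) is sound.

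One step, however, is phrased in a way that does not support the conclusion drawn from it. You argue: the \emph{values} $\Mat(g)_{ij}$ lie in $(\B_{\dR,K}^+)^{\Gamma_n-\an}=K_n[\![t]\!]$, hence are pro-analytic vectors of $\B_{\dR,K}^+$, ``therefore the map $\Gamma_K\to\GL_d(\B_{\dR,K}^+)$ is pro-analytic''. That inference is a non sequitur: the hypothesis of the Fréchet half of proposition~\ref{lainla and painpa} is that $g\mapsto\Mat(g)$ be pro-analytic \emph{as a function of $g$}, and knowing that each value $\Mat(g)_{ij}$ happens to be a pro-analytic vector of $\B_{\dR,K}^+$ says nothing about the dependence on $g$. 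The correct (and direct) justification does not pass through $K_n[\![t]\!]$ at all: since each $e_j\in W^{\Gamma_n-\an}$, the orbit map $g\mapsto g(e_j)$ admits a convergent expansion $\sum_{\kbf}\cbf(g)^{\kbf}(e_j)_{\kbf}$ with $(e_j)_{\kbf}\to 0$ in $W$; projecting onto the $i$-th coordinate of the topological direct sum $W=\bigoplus_i\B_{\dR,K}^+\cdot e_i$ shows that each entry function $g\mapsto\Mat(g)_{ij}$ is $\Gamma_n$-analytic with values in $\B_{\dR,K}^+$, hence pro-analytic, so proposition~\ref{lainla and painpa} applies. With this repair the rest of the argument goes through as you wrote it.
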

\begin{proof}
This is proposition 3.3 of \cite{Porat} and also follows from the previous proposition: we know that $\D_{\dif}^+(V) = K_\infty[\![t]\!]\otimes_{K_n[\![t]\!]}\D_{\dif,n}^+(V)$. Rewriting this using lemma \ref{lemma Fla and Fpa in Bdrplus} and proposition \ref{D_difn=Gamma_nan}, we get:
$$\D_{\dif}^+(V) = (\B_{\dR,K}^+)^{\pa}\otimes_{(\B_{\dR,K}^+)^{\Gamma_n-\an}}((\Bdrplus \otimes_{\Qp}V)^{H_K})^{\Gamma_n-\an}$$
so that $\D_{\dif}^+(V) = ((\Bdrplus \otimes_{\Qp}V)^{H_K})^{\pa}$ by proposition \ref{lainla and painpa}. 
\end{proof}

We can use this result to generalize the theory to the Lubin-Tate case, as it has been done in \cite[\S 3]{Porat}: we define $\D_{\dif}^+(V)$ by the formula $\D_{\dif}^+(V) := ((\Bdrplus \otimes_{\Qp}V)^{H_K})^{\pa}$, so that our object is indeed the classical $\D_{\dif}^+$ when $F=\Qp$ by proposition \ref{Ddif=Ddrrpa}, and if $V$ is an $F$-representation of $\G_K$ then $\D_{\dif}^+(V)$ is a free $K_\infty[\![t_\pi]\!]$-module of rank $\dim_F(V)$, and the natural map $\Bdrplus \otimes_{K_\infty[\![t_\pi]\!]}\D_{\dif}^+(V) \ra \B_{\dR}^+\otimes_{F}V$ is an isomorphism.

Note that when $V$ is an $F$-analytic representation of $\G_K$, one can define a module $\D_{\dif,n}^+(V)$ in the $F$-analytic Lubin-Tate case using the same arguments as given in the proof of proposition \ref{D_difn=Gamma_nan}: we can define $\D_{\dif,n}^+(V)$ by $\D_{\dif,n}^+(V) := ((\Bdrplus \otimes_{\Qp}V)^{H_K})^{\Gamma_n-\an}$ for $n \gg 0$, and we have $\D_{\dif}^+(V)=K_\infty[\![t_\pi]\!]\otimes_{K_n[\![t_\pi]\!]}\D_{\dif,n}^+(V)$ for $n \gg 0$. 

In particular, the rings $\cal{C}^{\an}(\Gamma_n,\Bdrplus)$ and $\cal{C}^{\la}(\Gamma_K,\Bdrplus)_1$ allow us to compute the modules $\D_{\dif}^+(V)$ in the spirit of Fontaine's strategy. Moreover, this shows that every $p$-adic representation of $\G_K$ is ``$\cal{C}^{\la}(\Gamma_K,\Bdrplus)_1$-admissible''.

In general, when $K_\infty/K$ is any $p$-adic Lie extension, one could define a module $\D_{\dif,K}^+(V)$ in the same manner, taking the pro-analytic vectors of $(\B_{\dR}^+)^{H_K}$ for the action of $\Gamma_K$. The fact that this module has the same dimension as $\dim_{\Qp}V$ follows from an unpublished result of Porat, and one could show in that case that the ring $\varprojlim\limits_k \varinjlim\limits_n(\cal{C}^{\an}(\Gamma_n,\Qp)\hat{\otimes}_{\Qp}\Bdrplus/t^k)$ computes the said module.

\section{$(\phi,\Gamma)$-modules}
Computations made by Berger in \cite[\S 4, \S 8]{Ber14MultiLa} show that classical cyclotomic $(\phi,\Gamma)$-modules over the Robba ring $\B_{\rig,K}^\dagger$ can be recovered by using pro-analytic vectors, and theorem 10.1 of ibid shows that this can be extended to $F$-analytic representations. If $V$ is an $F$-analytic representation of $\G_K$ then we can attach a $(\phi_q,\Gamma_K)$-module which we will denote $\D_{\rig}^\dagger(V)$.

Given an $F$-analytic $E$-representation $V$ of $\G_K$ , we let $\tilde{\D}_{\rig,K}^{\dagger,r}(V)=(\Bt_{\rig,K}^{\dagger,r}\otimes_{E}V)^{H_K}$. For $r > 0$ and $n \geq 1$ we let $\B_{\rig,K,n}^{\dagger,r} = \phi_q^{-n}(\B_{\rig,K,n}^{\dagger,q^nr})$, and we let $\B_{\rig,K,\infty}^{\dagger,r} = \bigcup_{n \geq 1}\B_{\rig,K,\infty}^{\dagger,r}$ in $\Bt_{\rig,K}^{\dagger,r}$, and $\B_{\rig,K,n}^{\dagger}=\bigcup_{r > 0}\B_{\rig,K,n}^{\dagger,r}$, $\B_{\rig,K,\infty}^{\dagger}=\bigcup_{r > 0}\B_{\rig,K,\infty}^{\dagger,r}$.

\begin{prop}
\label{prop phi gamma from locana berger}
We have
\begin{enumerate}
\item  $(\Bt_{\rig,K}^{\dagger,r_k})^{\pa} = \B_{\rig,K,\infty}^{\dagger,r_k}$;
\item $\tilde{\D}_{\rig,K}^{\dagger}(V)^\pa = \B_{\rig,K,\infty}^{\dagger}\otimes_{\B_{\rig,K}^{\dagger}}\D_{\rig,K}^{\dagger}(V)$;
\item if $\D$ is a $(\phi_q,\Gamma_K)$-module over $\B_{\rig,K}^{\dagger}$ such that $\tilde{\D}_{\rig,K}^{\dagger}(V)^\pa = \B_{\rig,K,\infty}^{\dagger}\otimes_{\B_{\rig,K}^{\dagger}}\D$ then $\D=\D_{\rig,K}^{\dagger,r}(V)$.
\end{enumerate}
\end{prop}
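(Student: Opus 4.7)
The plan is to establish the three assertions in order; all three are due to Berger \cite{Ber14MultiLa}, so I only outline the strategy. Item (1) is the main technical input.

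For (1), the inclusion $\B_{\rig,K,\infty}^{\dagger,r_k} \subseteq (\Bt_{\rig,K}^{\dagger,r_k})^{\pa}$ is the easy direction: every element of $\B_{\rig,K,\infty}^{\dagger,r_k}$ is a convergent series in a variable $v_K$ with coefficients in a locally analytic completion of $K_\infty$, and since the $\Gamma_K$-action on $v_K$ is given by an analytic power series, pro-analyticity follows from Lemma \ref{ringla} applied seminorm by seminorm. The converse inclusion, a decompletion statement, is the hard part. The plan is to mimic Tate-Sen: for each closed subinterval $I \subset [r_k,+\infty[$, produce approximate normalized trace maps $\Bt_{\rig,K}^I \to \B_{\rig,K,\infty}^I$ splitting the inclusion up to errors going to $0$ in the seminorm for $I$, and deduce that any pro-analytic vector is in fact a limit of, and hence equal to, an element of $\B_{\rig,K,\infty}^{\dagger,r_k}$. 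Constructing these trace maps with quantitative control is the main obstacle.

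For (2), the plan is to combine (1) with Cherbonnier-Colmez (Theorem \ref{theo cherbonniercolmez}): one has $\tilde{\D}_{\rig,K}^\dagger(V) = \Bt_{\rig,K}^\dagger \otimes_{\B_{\rig,K}^\dagger}\D_{\rig,K}^\dagger(V)$. Choosing a basis of $\D_{\rig,K}^{\dagger,r_n}(V)$ over $\B_{\rig,K}^{\dagger,r_n}$ for $n$ large, the matrices $\mathrm{Mat}(g)$ of $g \in \Gamma_n$ have entries in $\B_{\rig,K}^{\dagger,r_n} \subseteq (\Bt_{\rig,K}^{\dagger,r_n})^{\pa}$ by (1), and a direct check shows that $g \mapsto \mathrm{Mat}(g)$ is pro-analytic on a sufficiently small neighbourhood of the identity in $\Gamma_K$. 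The Fréchet version of Proposition \ref{lainla and painpa} then yields
\[
\tilde{\D}_{\rig,K}^\dagger(V)^{\pa} = (\Bt_{\rig,K}^\dagger)^{\pa} \otimes_{\B_{\rig,K}^\dagger} \D_{\rig,K}^\dagger(V) = \B_{\rig,K,\infty}^\dagger \otimes_{\B_{\rig,K}^\dagger}\D_{\rig,K}^\dagger(V)
\]
after taking unions over $r$ and applying (1).

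Finally, (3) is a uniqueness assertion. Given $D$ satisfying the hypothesis, both $D$ and $\D_{\rig,K}^\dagger(V)$ sit inside $\tilde{\D}_{\rig,K}^\dagger(V)^{\pa}$ as free $\B_{\rig,K}^\dagger$-submodules of the same rank $d = \dim_{\Qp}V$, both stable by $\phi$ and $\Gamma_K$, and both generating the pro-analytic module over $\B_{\rig,K,\infty}^\dagger$. The plan is to descend to the bounded Robba subring and invoke the Cherbonnier-Colmez maximality: $\D^\dagger(V)$ is the unique maximal free $(\phi,\Gamma_K)$-stable sub-$\B_K^\dagger$-module whose base change to $\B_K$ gives $D(V)$, which forces $D$ to coincide with $\D_{\rig,K}^\dagger(V)$ after a rank count.
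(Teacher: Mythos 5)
For items (1) and (2) both you and the paper simply defer to Berger's theorems (\cite{Ber14MultiLa}, Thm.~4.4 item 3 and Thm.~8.1 item 2); your accompanying sketch of the Tate--Sen decompletion and the ``pro-analytic $\Rightarrow$ free module splits'' mechanism is consistent with what Berger does, so nothing to flag there.

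Item (3) is where your argument has a genuine gap, and where the paper's proof is in fact quite different. You assert that $D$ and $\D_{\rig,K}^{\dagger}(V)$ are both rank-$d$ free $\B_{\rig,K}^\dagger$-submodules of $\tilde{\D}_{\rig,K}^{\dagger}(V)^{\pa}$ generating it over $\B_{\rig,K,\infty}^\dagger$, and conclude ``after a rank count.'' But equality of rank does not imply equality of submodules: already in rank $1$, if $u$ is a unit of $\B_{\rig,K,\infty}^\dagger$ that is not in $\B_{\rig,K}^\dagger$, then $\B_{\rig,K}^\dagger$ and $\B_{\rig,K}^\dagger\cdot u$ are distinct rank-$1$ free submodules with the same extension of scalars. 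Moreover, the Cherbonnier--Colmez maximality statement (Theorem \ref{theo cherbonniercolmez}) is about \emph{\'etale} $(\phi,\Gamma)$-modules over the \emph{bounded} ring $\B_K^\dagger$; it is not given a priori that $D$ descends to such a module, so there is no clean way to ``invoke maximality'' here. What the paper actually does is a Frobenius-regularity argument: pick bases and let $M$ be the base-change matrix, $P_1, P_2$ the matrices of $\phi$ on $D$ and on $\D_{\rig,K}^{\dagger,r}(V)$. A priori $M \in \GL_d(\B_{\rig,K,\infty}^\dagger)$, hence $M \in \GL_d(\B_{\rig,K,n}^\dagger)$ for some $n \gg 0$; the commutation relation $M = P_2^{-1}\phi(M)P_1$ combined with $\phi(\B_{\rig,K,n}^\dagger) \subset \B_{\rig,K,n-1}^\dagger$ and $P_1, P_2 \in \GL_d(\B_{\rig,K}^\dagger)$ lets one strictly decrease $n$, and iterating forces $M \in \GL_d(\B_{\rig,K}^\dagger)$, hence $D = \D_{\rig,K}^{\dagger,r}(V)$. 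Your rank count should be replaced by this descent along $\phi$.
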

\begin{proof}
The first item is item 3 of theorem 4.4 of \cite{Ber14MultiLa}. The second item is item 2 of theorem 8.1 of \cite{Ber14MultiLa} in the cyclotomic setting, or follows from the proof of theorem 10.1 of ibid. in the Lubin-Tate setting. For the last item, let $M$ denote the base change matrix and $P_1,P_2$ denote the matrices of $\phi$ on $\D$, $\D_{\rig,K}^{\dagger,r}(V)$ respectively. There exists $n \gg 0$ such that $M \in \GL_d(\B_{\rig,K,n}^\dagger)$, and the equation $M = P_2^{-1}\phi_q(M)P_1$ implies that $M \in \GL_d(\B_{\rig,K}^\dagger)$.
\end{proof}

In particular, taking the pro-analytic vectors of $\tilde{\D}_{\rig,K}^{\dagger,r}(V)$ allows us to recover the $(\phi_q,\Gamma_K)$-module $\D_{\rig,K}^{\dagger,r}(V)$, either in the cyclotomic setting or in the $F$-analytic Lubin-Tate setting. 

As in the constructions for $\Bdr$ and $\Cp$, the rings $\cal{C}^{\la}(\Gamma_n,\B)_1$, for $\B$ an LB or LF space, are not endowed with an action of $\Gamma_K$ but only with an action of its Lie algebra, so that if $V$ is a $p$-adic representation of $\G_K$, the module $(\cal{C}^{\la}(\Gamma_n,\B)_1\otimes V)^{H_K}$ is only endowed with an operator $\nabla$ coming from the infinitesimal action of $\Gamma_K$, and from the operator $\nabla_\id$, which we still denote by $\nabla$, in the Lubin-Tate setting. In particular, the constructions laid out in this subsection can only allow us to recover the $(\phi,\nabla)$-module (or $F$-analytic $(\phi_q,\nabla)$-module) attached to a representation $V$. 

\begin{prop}
Let $V$ be an $F$-analytic representation of $\G_K$ and let $r > 0$. The collection $(\bigcup_n(\cal{C}^{\an}(\Gamma_n,\Bt^I)\otimes_{\Qp}V)^{\G_{K_n}})_{\min(I) \geq r}$ equipped with natural transition maps $\bigcup_n(\cal{C}^{\an}(\Gamma_n,\Bt^I)\otimes_{\Qp}V)^{\G_{K_n}} \ra \bigcup_n(\cal{C}^{\an}(\Gamma_n,\Bt^J) \otimes_{\Qp}V)^{\G_{K_n}}$ when $J \subset I$, and Frobenius maps $\phi_q : \bigcup_n(\cal{C}^{\an}(\Gamma_n,\Bt^I)\otimes_{\Qp}V)^{\G_{K_n}} \ra \bigcup_n(\cal{C}^{\an}(\Gamma_n,\Bt^{qI})\otimes_{\Qp}V)^{\G_{K_n}}$ defines a $(\phi_q,\nabla)$-module $\tilde{\D}$ over $\varprojlim\limits_I(\cal{C}^{\an}(\Gamma_n,\Bt^I))^{\G_{K_n}} \simeq (\Bt_{\rig,K}^{\dagger,r})^{\pa}$, and we have $\tilde{\D} \simeq \tilde{\D}_{\rig}^{\dagger,r}(V)^{\pa}$ as $(\phi_q,\nabla)$-modules.

Moreover, there exists a $(\phi_q,\nabla)$-module $\D$ on $\B_{\rig,K}^\dagger$ inside $\tilde{\D}$ such that $\tilde{\D} = (\Bt_{\rig,K}^{\dagger})^{\pa}\otimes_{\B_{\rig,K}^\dagger}\D$, and if $\D'$ is a $(\phi_q,\nabla)$-module on $\B_{\rig,K}^{\dagger}$ such that $\tilde{\D}_{\rig,K}^{\dagger}(V)^\pa = \B_{\rig,K,\infty}^{\dagger}\otimes_{\B_{\rig,K}^{\dagger}}\D'$ then $\D=\D'$. In particular, $\D \simeq \D_{\rig,K}^\dagger(V)$.
\end{prop}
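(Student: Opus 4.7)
The plan is to reduce the entire statement to Berger's Proposition \ref{prop phi gamma from locana berger} by first identifying the rings $\bigcup_n (\Bt^I\crochu_n \otimes_{\Qp} V)^{\G_{K_n}}$ with the locally analytic vectors of $\tilde{D}^I(V)$ for each compact interval $I$, and then passing to the inverse limit over $I$.

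First I would apply the preceding lemma (which is Proposition \ref{prop Bcrochu computes locana} specialized to $B = \Bt^I$) to each compact subinterval $I$ with $\min(I) \geq r$ to get a canonical isomorphism
\[
\bigcup_n(\Bt^I\crochu_n \otimes_{\Qp}V)^{\G_{K_n}} \simeq \tilde{D}^I(V)^{\la}
\]
which is compatible with the natural transition maps as $I$ varies (since both sides are functorial in the ring $\Bt^I$, the transition maps on the left being those induced by the natural inclusions $\Bt^J \hookrightarrow \Bt^I$ when $I \subset J$), and compatible with the Frobenius (since $\phi : \Bt^I \to \Bt^{pI}$ commutes with taking $H_K$-invariants and with the locally analytic vectors formalism). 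The identification $\varprojlim_I(\Bt^I\crochu_n)^{\G_{K_n}} \simeq (\Bt_{\rig,K}^{\dagger,r})^{\pa}$ follows by applying the same argument to $V = \Qp$ and using that pro-analytic vectors of a Fréchet space are by definition those whose image in each Banach quotient is locally analytic.

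Next I would check that the connection induced on the left-hand side, namely $-\tfrac{d}{du}$, is sent to the connection $\nabla$ coming from the infinitesimal action of $\Gamma_K$ on $\tilde{D}^I(V)^{\la}$. This is exactly the content of the explicit description of the isomorphism in the proof of Proposition \ref{prop Bcrochu iso locana}: an element $x = \sum_i x_i u^i$ corresponds to the element $x_0$, and the inverse sends $z$ to $\sum_i (-1)^i \tfrac{\nabla^i(z)}{i!}u^i$, so that applying $-\tfrac{d}{du}$ on the left corresponds to applying $\nabla$ on the right. Passing to the inverse limit over $I$ then yields an isomorphism of $(\phi,\nabla)$-modules
\[
\tilde{\D} \simeq \tilde{D}_{\rig}^{\dagger,r}(V)^{\pa}
\]
over $(\Bt_{\rig,K}^{\dagger,r})^{\pa}$.

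Finally, the descent statement is an immediate consequence of Proposition \ref{prop phi gamma from locana berger}. Item (2) of that proposition gives $\tilde{D}_{\rig,K}^{\dagger}(V)^{\pa} = \B_{\rig,K,\infty}^{\dagger}\otimes_{\B_{\rig,K}^{\dagger}}\D_{\rig,K}^{\dagger,r}(V)$, so one takes $\D := \D_{\rig,K}^{\dagger,r}(V)$, which is a $(\phi,\nabla)$-module on $\B_{\rig,K}^\dagger$ (the $\nabla$ structure being induced by the $\Gamma_K$-action on $\D_{\rig,K}^{\dagger,r}(V)$). The uniqueness of such a $\D'$ is then item (3) of the same proposition.

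The only step that requires care is the identification of the connections, which comes down to the explicit formula in the proof of Proposition \ref{prop Bcrochu iso locana}; the rest is essentially a bookkeeping of functorialities on top of Berger's theorem.
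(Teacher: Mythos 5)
Your proof is correct and follows essentially the same route as the paper's. The one substantive difference is that the paper makes explicit use of the Fréchet--Stein/coadmissibility formalism introduced in the preceding subsection: it verifies that the collection $(\bigcup_n(\Bt^I\crochu_n \otimes_{\Qp}V)^{\G_{K_n}})_I$ forms a coherent sheaf glued by Frobenius, and then defines $\tilde{\D}$ as the associated coadmissible module, after which the descent to $\B_{\rig,K}^\dagger$ is carried out by choosing a basis $v_1,\dots,v_d$ of $\tilde{\D}$, applying a Frobenius twist $\phi^n$ to make it land in the right ring and invoking uniqueness as in item (3) of Proposition~\ref{prop phi gamma from locana berger}. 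You instead pass directly from the identification $\bigcup_n(\Bt^I\crochu_n \otimes_{\Qp}V)^{\G_{K_n}} \simeq \tilde{D}^I(V)^{\la}$ to the inverse limit $\tilde{\D}\simeq\tilde{D}_{\rig}^{\dagger,r}(V)^{\pa}$ and then cite item (2) of Proposition~\ref{prop phi gamma from locana berger} to produce $\D=\D_{\rig,K}^{\dagger,r}(V)$ outright, which is slightly cleaner. The step you pass over a bit lightly is what entitles one to call $\tilde{\D}$ a module over the limit ring $(\Bt_{\rig,K}^{\dagger,r})^{\pa}$ at all — this is precisely what coadmissibility is for, and it requires the gluing condition $B_J\hat{\otimes}_{B_I}M_I\simeq M_J$, which in turn needs the freeness of each $\tilde{D}^I(V)^{\la}$ over $(\Bt_K^I)^{\la}$ via Proposition~\ref{lainla and painpa}. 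Since you end up identifying $\tilde{\D}$ explicitly with $\tilde{D}_{\rig}^{\dagger,r}(V)^{\pa}$, this is implicitly handled, but it would be worth saying a word. Your extra verification that $-d/du$ matches the infinitesimal $\Gamma_K$-connection $\nabla$ under the isomorphism of Proposition~\ref{prop Bcrochu iso locana} is a useful point the paper leaves implicit.
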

\begin{proof}
The first part of the proposition follows from the definition of pro-analytic vectors, proposition \ref{prop Bcrochu iso locana} and proposition \ref{prop phi gamma from locana berger}.

Since $(\Bt_{\rig,K}^{\dagger,r})^{\pa} = \B_{\rig,K,\infty}^{\dagger,r}$, there exist elements $v_1,\cdots,v_d$ of $\tilde{\D}$ and $n \gg 0$ such that $\D:=\oplus_{i=1}^d\B_{\rig,K}^{\dagger,p^nr}\cdot\phi^n(v_i)$ generates $\tilde{\D}$. The unicity of $\D$ follows from the same argument as in the proof of the last item of proposition \ref{prop phi gamma from locana berger}.
\end{proof}

\section{Generalization to other $p$-adic Lie extensions}
\subsection{General results when $K_\infty$ contains a cyclotomic extension}
In what follows, $K$ is a finite extension of $\Qp$, $K_\infty/K$ is an infinite Galois $p$-adic Lie extension, with $\dim \Gamma_K \geq 2$, and such that $K_\infty$ contains a cyclotomic extension, in the sense that there exists an unramified character $\eta : \G_K \ra \Z_p^\times$ such that $K_\infty \cap \Qpbar^{\eta \chi_{\cycl}}$ is infinitely ramified. We let $K_\infty^\eta$ denote the extension $K_\infty \cap \Qpbar^{\eta \chi_{\cycl}}$. 

Recall that $K_\infty^\eta/K$ is the extension of $K$ attached to $\eta\chi_{\mathrm{cycl}}$. Let $\Gamma_{K,\eta} = \Gal(K_\infty^\eta/K)$ and $H_{K,\eta}=\Gal(\Qpbar/K_\infty^\eta)$. Let $\B_{K,\eta}^\dagger$, $\B_{K,\eta}^I$ and $\B_{\mathrm{rig},K,\eta}^\dagger$ be as in \cite[\S 8]{Ber14MultiLa}. By the same arguments as in \cite[\S 8]{Ber14MultiLa}, there is an equivalence of categories between étale $(\phi,\Gamma_{K,\eta})$-modules over $E \otimes_{\Qp}\B_{\mathrm{rig},K,\eta}^\dagger$ (it is also true over $E \otimes_{\Qp}\B_{K,\eta}^\dagger$) and $E$-representations of $\G_K$. 

If $V$ is a $p$-adic representation of $\G_K$, we let $\D_\eta^\dagger(V):=\bigcup_{r \gg 0}\D_\eta^{\dagger,r}(V)$, where $\D_\eta^{\dagger,r}(V):=(\B_\eta^{\dagger,r}\otimes_{\Qp}V)^{H_{K,\eta}}$. Let $\D_{\eta}^{[r;s]}$ and $\D_{\rig,\eta}^{\dagger,r}(V)$ denote the various completions of $\D_\eta^{\dagger,r}(V)$. We let $\tilde{\D}_\eta^{[r;s]}(V)=(\Bt^{[r;s]}\otimes_{\Qp}V)^{H_{K,\eta}}$ and $\tilde{\D}_{\rig,\eta}^{\dagger,r}(V)=(\Bt_{\rig}^{\dagger,r}\otimes_{\Qp}V)^{H_{K,\eta}}$. By the variant of the Cherbonnier-Colmez theorem for twisted cyclotomic extensions, we have that $\tilde{\D}_\eta^{[r;s]}(V)=\Bt_{K,\eta}^{[r;s]}\otimes_{\B_{K,\eta}^{[r;s]}}\D_{\eta}^{[r;s]}$ and $\tilde{\D}_{\rig,\eta}^{\dagger,r}(V)=\Bt_{\rig,K,\eta}^{\dagger,r}\otimes_{\B_{\rig,K,\eta}^{\dagger,r}}\D_{\rig,K,\eta}^{\dagger,r}(V)$.

\begin{lemm}
\label{lemma phigammamod gives locana}
Let $r \geq 0$ be such that $\D^{\dagger,r}_\eta(V)$ is free of rank $\dim_{\Qp}(V)$ as a $\B_{\rig,K,\eta}^{\dagger,r}$-module, and let $s \geq r$. Then the elements of $\D_{\eta}^{[r;s]}(V)$ are locally analytic for the action of $\Gal(K_\infty^{\eta}/K)$.
\end{lemm}
\begin{proof}
See the proof of \cite[Thm. 8.1]{Ber14MultiLa}.
\end{proof}

\begin{coro}
\label{coro construire vla}
Let $V$ be a $p$-adic representation of $\G_K$ which factors through $\Gamma_K$, and let $r > 0$ be such that $\D^{\dagger,r}_\eta(V)$ is free of rank $\dim_{\Qp}(V)$ as a $\B_{\rig,K,\eta}^{\dagger,r}$-module, so that $\B_\eta^{\dagger,r}\otimes_{\Qp}V \simeq \B_\eta^{\dagger,r}\otimes_{\B_{K,\eta}^{\dagger,r}}\D_{\eta}^{\dagger,r}(V)$. Then the coefficients in $\B_\eta^{\dagger,r}$ of a base change matrix between $V$ and $\D_{\eta}^{\dagger,r}(V)$ belong to $(\B_{K_{\eta}}^{[r;s]})^{\la}$ for any $s \geq r$. 
\end{coro}
\begin{proof}
Let $V$ be such a $p$-adic representation of $\G_K$. Since $V$ factors through $\Gamma_K$ and by Cartan's theorem (see for example \cite[Prop. 3.6.10]{emerton2004locally}), the elements of $V=V^{H_K}$ are locally analytic vectors for the action of $\Gamma_K$. Now, we have 
$$\tilde{\D}_{\eta}^{[r;s]}(V)^{\la}=(\Bt_{K,\eta}^{[r;s]}\otimes_{\Qp}V)^{\la}$$
since $V$ factors through $\Gamma_K$, and thus
\begin{equation}\label{equationla1}
\tilde{\D}_{\eta}^{[r;s]}(V)^{\la}=(\Bt_{K,\eta}^{[r;s]})^{\la}\otimes_{\Qp}V
\end{equation}
by proposition \ref{lainla and painpa}.

Since $K_\infty$ contains $K_\infty^{\eta}$, the elements of $\D_{\eta}^{[r;s]}(V)$ are locally analytic for the action of $\Gamma_K$, as they are locally analytic for the action of $\Gamma_{K,\eta}$ by \ref{lemma phigammamod gives locana} and invariant by the action of $\Gal(K_\infty/K_{\infty}^\eta)$ (which is just an other way of saying that the action of $\Gamma_K$ on $\D_{\eta}^{[r;s]}(V)$ factor through a locally analytic action of $\Gamma_K^{\eta}$).

By proposition \ref{lainla and painpa}, this implies that
\begin{equation}\label{equationla2}
\tilde{\D}_{K}^{[r;s]}(V)^{\la}=(\Bt_{K}^{[r;s]})^{\la}\otimes_{\Bt_{K,\eta}^{[r;s]}}\D_{\eta}^{[r;s]}.
\end{equation}
Putting equations \ref{equationla1} and \ref{equationla2} together, we obtain that
$$(\Bt_{K}^{[r;s]})^{\la}\otimes_{\B_{K,\eta}^{[r;s]}}\D_{\eta}^{[r;s]}=(\Bt_K^{[r;s]})^{\la}\otimes_{\Qp}V.$$
In particular, this implies that the coefficients of the base change matrix in $\GL_d(\B_\eta^{\dagger,r})$ belong to $(\Bt_{K,\eta}^{[r;s]})^{\la}$, and thus to $(\B_{K,\eta}^{[r;s]})^{\la}$. 
\end{proof}

This corollary will prove very useful in order to produce locally analytic vectors for $\Gamma_K$ in the rings $(\Bt_K^I)$. 

\begin{rema}
Note that the fact that $K_\infty$ contains $K_\infty^{\eta}$ is crucial for the proof of corollary \ref{coro construire vla} to work.
\end{rema}

\subsection{Higher locally analytic vectors}
Let $V$ be a Banach representation of a $p$-adic Lie group $G$, and assume that $G$ is small in the sense of \cite[\S 2.1]{PoratFF}, so that the set of $G$-analytic vectors of $V$ is well defined. 

The functor $V \mapsto V^{G-\an}$ is left exact, and following \S 2.2 of \cite{pan2022locally}, \cite{RJRC21} and \cite[\S 2.3]{PoratFF}, we define right derived functors for $i\geq0$:
\[
\mathrm{R}_{G-\an}^{i}(V)=H^{i}(G,V\widehat{\otimes}_{\Q_{p}}\mathcal{C}^{\an}(G,\Q_{p})),
\]
where we consider continuous cohomology on the right hand side.

If $G$ is a compact $p$-adic Lie group (without the smallness assumption) with subgroups $\left\{ G_{n}\right\} _{n\geq1}$
as in $\S 2$, taking the colimit over
$n$, there are right derived functors for $V\mapsto V^{G-\la}$ given
by
\[
\mathrm{R}_{G-\la}^{i}(V)=\varinjlim_{n}\mathrm{R}_{G_{n}-\an}^{i}(V)=\varinjlim_{n}H^{i}(G_{n},V\widehat{\otimes}_{\Q_{p}}\mathcal{C}^{\an}(G_{n},\Q_{p})).
\]

Following \cite[\S 2.3]{PoratFF}, we call these groups the higher locally analytic vectors of $V$.

Note that if
\[
0\rightarrow V\rightarrow W\rightarrow X\rightarrow0
\]
is a short exact sequence of $G$-Banach spaces, we then have a long exact sequence
\[
0\rightarrow V^{\la}\rightarrow W^{\la}\rightarrow X^{\la}\rightarrow\mathrm{R}_{G-\la}^{1}\left(V\right)\rightarrow\mathrm{R}_{G-\la}^{1}\left(W\right)\rightarrow\mathrm{R}_{G-\la}^{1}\left(X\right)\rightarrow \ldots
\]

The fact that the functor $V \mapsto V^\la$ is exact is thus equivalent to the vanishing of the higher locally analytic vectors $R^i_{G-\la}(V)$ for $i \geq 1$. 

We now prove several results and recall some results from \cite{PoratFF} regarding locally analytic vectors attached to $p$-adic Lie extensions in the rings $\Bt^I$. We let $K_\infty/K$ be a general $p$-adic Lie extension with Galois group $\Gamma_K$, and we let $H_K = \Gal(\Qpbar/K_\infty)$. We let $\Bt_K^I= (\Bt^I)^{H_K}$.

We recall the following result, which is item (ii) of corollary 5.6 of \cite{PoratFF}:

\begin{prop}
\label{prop R1vanish cyclo}
If $I$ is a compact subinterval of $[\frac{p}{p-1};+\infty[$, if $K_\infty$ contains a cyclotomic extension $K_\infty^\eta$, and if $M$ is a finite free $\Bt_K^I$-semilinear representation of $\Gamma_K$ then the higher analytic vectors $R^i_{\la}(M)$ are zero for $i \geq 1$.
\end{prop}

\begin{lemm}
\label{lemma ker theta}
Let $I =[r_k;r_\ell]$. For any $m \in [k;\ell]$ integer, the kernel of $\theta \circ \phi_q^{-m} : \At_K^I \ra \O_{\hat{K_\infty}}$ is a principal ideal.
\end{lemm}
\begin{proof}
The same proof as in \cite[Prop. 4.3.7]{Win83} (we are in the same setting since $K_\infty/K$ is strictly arithmetically profinite by the main theorem of \cite{sen1972ramification}) shows that the kernel of $\theta : \At_K^+ \ra \O_{\hat{K_\infty}}$ is principal, generated by some element $y$ such that $v_{\E}(\overline{y})=v_p(\pi_K)$, where $\pi_K$ is a uniformizer of $\O_K$. 

The same proof as for item 1. of lemma 3.2 of \cite{Ber14MultiLa} shows that $\frac{\phi_q^m(y)}{\pi_K}$ is then a generator of the kernel of $\theta \circ \phi_q^{-m} : \At_K^I \ra \O_{\hat{K_\infty}}$.
\end{proof}

\begin{coro}
\label{coro theta surj on locana}
Let $I =[r_k;r_\ell]$. If $K_\infty$ contains a cyclotomic extension $K_\infty^\eta$ then for any $m \in [k;\ell]$ integer, the map $\theta \circ \phi_q^{-m} : (\Bt_K^I)^{\la} \ra \hat{K_\infty}^{\la}$ is surjective.
\end{coro}
\begin{proof}
We have an exact sequence 
$$0 \ra \ker(\theta : \Bt_K^I \ra \hat{K_\infty})\ra \Bt_K^I \ra \hat{K_\infty} \ra 0$$
which gives rise to the exact sequence 
$$0 \ra (\ker(\theta : \Bt_K^I \ra \hat{K_\infty}))^{\la} \ra (\Bt_K^I)^{\la} \ra (\hat{K_\infty})^{\la} \ra R^1_{\la}(\ker(\theta : \Bt_K^I \ra \hat{K_\infty})) \ra \ldots$$
By lemma \ref{lemma ker theta}, the kernel of $\theta \circ \phi_q^{-m} : \At_K^I \ra \O_{\hat{K_\infty}}$ is a principal ideal so that it gives rise to a one dimensional $\Bt_K^I$-semilinear representation of $\Gamma_K$, and thus by proposition \ref{prop R1vanish cyclo} we have that $R^1_{\la}(\ker(\theta : \Bt_K^I \ra \hat{K_\infty}))=0$, so that we get the exact sequence 
$$0 \ra (\ker(\theta : \Bt_K^I \ra \hat{K_\infty}))^{\la} \ra (\Bt_K^I)^{\la} \ra (\hat{K_\infty})^{\la} \ra 0,$$
and thus the map $\theta \circ \phi_q^{-m} : (\Bt_K^I)^{\la} \ra \hat{K_\infty}^{\la}$ is surjective.
\end{proof}

Recall that by theorem 6.2 of \cite{Ber14SenLa}, $\hat{K_\infty}^{\la}$ is a ring of power series in $d-1$ variables. Since in the case we consider $K_\infty$ contains a cyclotomic extension, lemma \ref{lemma ker theta} shows that $\ker(\theta \circ \phi_q^{-m})$ is a principal ideal generated by a locally analytic vector of $\Bt_K^I$, and corollary \ref{coro theta surj on locana} shows that the map $\theta \circ \phi_q^{-m} : (\Bt_K^I)^{\la} \ra \hat{K_\infty}^{\la}$ is surjective. This (and the computations of section \ref{subsection particular case}) makes us think that the following conjecture should hold:

\begin{conj}
\label{conjecture structure good case}
If $K_\infty/K$ contains a cyclotomic extension, then for $n \gg 0$, there exist $d$ elements $x_{1,n},\ldots,x_{d,n}$ in $(\Bt_K^I)^{\Gamma_n-\an}$ such that 
$(\Bt_K^I)^{\Gamma_n-\an}$ is the set of power series $\sum_{\i = (i_1,\dots,i_d) \in \N^d}a_{\i}x_{j,n}^{i_j}$ in the variables $(x_{i,n})_{i \in \{1,\cdots,d\}}$ with coefficients in $K$ such that the series $\sum_{\i = (i_1,\dots,i_d) \in \N^d}a_{\i}x_{j,n}^{i_j}$ converge in $(\Bt_K^I)^{\Gamma_n-\an}$.
\end{conj}

\subsection{The anticyclotomic case}
\label{bad lie}
Berger and Colmez have proven in \cite{Ber14SenLa} that the theory of locally analytic vectors is the right object to consider in order to generalize classical Sen theory to arbitrary $p$-adic Lie extensions. With that in mind, and considering the results above that show that in the cyclotomic (and in the $F$-analytic Lubin-Tate) case one recovers classical $(\phi,\Gamma)$-modules theory, it seems reasonable to assume that the theory of locally analytic vectors is the right object to consider in order to generalize $(\phi,\Gamma)$-modules to arbitrary $p$-adic Lie extensions. 

It has already been noticed that, even in the Lubin-Tate case, ``one dimensional $(\phi_q,\Gamma_K)$-modules'' do not behave well \cite{FX13} and that the kind of objects one should consider are multivariable Lubin-Tate $(\phi_q,\Gamma_K)$-modules \cite{berger2012multivariable} which arise from locally analytic vectors  \cite{Ber14MultiLa}.

Therefore, in general, one should expect to use that theory for arbitrary $p$-adic Lie extensions to get a theory of $(\phi_q,\Gamma_K)$-modules over $(\Bt_{\rig,K}^{\dagger})^\pa$, and such that the functor $V \mapsto ((V \otimes_{\Qp}\Bt_{\rig})^{H_K})^{\pa}$ is a faithfully exact functor. We now give some insight as to why such a generalization does not seem to be true in general, using the anticyclotomic extension as a potential counterexample. 

Let $F/\Qp$ be the unramified extension of $\Qp$ of degree $2$. We take $\pi$ to be equal to $p$ in our Lubin-Tate setting. We let $\sigma$ denote the Frobenius on $F$. By \cite[\S 5]{Ber14MultiLa} the element $y_\sigma=\phi(u) \in \Atplus$ is such that $g(y_\sigma) = [\chi_p(g)]^\sigma(y_\sigma)$ for $g \in \G_F$. Since $[p](T) \in \Z_p[T]$, the series $\log_{\LT}(T)$ and $\exp_{\LT}(T)$ have all their coefficients in $\Qp$, so that $t_\sigma = \phi(t_p)= \log_{\LT}(y_\sigma)$. 

Let $F_{\mathrm{cycl}}=F(\mu_{p^\infty})$ denote the cyclotomic extension of $F$. We let $F_{\mathrm{ac}}$ be the anticyclotomic extension of $F$: it is the unique $\Z_p$ extension of $F$, Galois over $\Qp$, which is pro-dihedral: the Frobenius $\sigma$ of $\Gal(F/\Qp)$ acts on $\Gal(F_{\mathrm{ac}}/F)$ by inversion. It is linearly disjoint from $F_{\cycl}$ over $F$, and the compositum $F_{\mathrm{cycl}}\cdot F_{\mathrm{ac}}$ is equal to the Lubin-Tate extension $F_p^{\LT}$ attached to $p$ by local class field theory. The anticyclotomic extension is then the subfield of $F_\infty$ fixed under $G_\sigma:=\{g \in \Gal(F_{\infty}/F)~: \chi_p(g) = \sigma(\chi_p(g))\}$, and the cyclotomic extension of $F$ is the one fixed by $G:=\{g \in \Gal(F_{\infty}/F)~: \chi_p(g) = (\sigma(\chi_p(g)))^{-1}\}$. Note that $G \simeq \Gal(F_{ac}/F)$ as $F_\infty/F$ is abelian, and we still write $G$ for the Galois group of $F_{ac}/F$. We let $H_{F,ac}$ denote the group $\Gal(\Qpbar/F_{\mathrm{ac}})$, and if $B$ is a ring of periods we let $B_{F,\mathrm{ac}}$ denote $B^{H_{F,ac}}$. We write $t_1$ for $t_p$ and $t_2$ for $t_\sigma$.

\begin{prop}
\label{plgt BI Bdr cas anticyclo}
We have $(\B_{\dR,F,ac}^+)^{\pa} = F_{\mathrm{ac}}[\![\frac{t_1}{t_2}]\!]$.
\end{prop}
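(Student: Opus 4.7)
The plan is to adapt the strategy of propositions~\ref{prop struc Bdrpa} and~\ref{prop struc complete Bdrpa} to the anticyclotomic setting, using the element $\xi := t_1/t_2$ as a $G_\sigma$-invariant generator of $\ker\theta$ visible inside $\B_{\dR,F,\mathrm{ac}}^+$. First I observe that $\xi \in \B_{\dR,F,\mathrm{ac}}^+$: for $g \in G_\sigma$ one has $g(\xi) = (\chi_p(g)/\sigma(\chi_p(g)))\,\xi = \xi$ by the very definition of $G_\sigma$. Since $t_2 \in (\B_{\dR,F}^+)^\times$, the element $\xi$ remains a generator of $\ker\theta$ in the DVR $\B_{\dR}^+$. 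Moreover $\xi$ is $\Gamma_{F,\mathrm{ac}}$-pro-analytic, since it transforms under $\Gamma_F$ by the locally analytic character $\eta(g) := \chi_p(g)/\sigma(\chi_p(g))$, which factors through $\Gamma_{F,\mathrm{ac}}$.

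For the inclusion $F_{\mathrm{ac}}[\![\xi]\!] \subset (\B_{\dR,F,\mathrm{ac}}^+)^{\pa}$, the canonical $\G_K$-equivariant embedding $\Qpbar \hookrightarrow \B_{\dR}^+$ places $F_{\mathrm{ac}}$ inside $\B_{\dR,F,\mathrm{ac}}^+$, and each element of $F_{\mathrm{ac}}$ lies in a finite subextension fixed by an open subgroup of $\Gamma_{F,\mathrm{ac}}$, hence is locally analytic. Combining this with the pro-analyticity of $\xi$, lemma~\ref{ringla}, and the fact that any formal series $\sum c_k \xi^k$ converges in the $\ker\theta$-adic (Fréchet) topology of $\B_{\dR}^+$, we deduce $F_{\mathrm{ac}}[\![\xi]\!] \subset (\B_{\dR,F,\mathrm{ac}}^+)^{\pa}$.

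For the reverse inclusion, let $x \in (\B_{\dR,F,\mathrm{ac}}^+)^{\pa}$. The reduction $\theta(x)$ lies in $\hat{F_{\mathrm{ac}}}^{\la}$; since $\Gamma_{F,\mathrm{ac}} \cong \Z_p$ is $1$-dimensional, classical Sen theory gives $\hat{F_{\mathrm{ac}}}^{\la} = F_{\mathrm{ac}}$, so $\theta(x) \in F_{\mathrm{ac}}$. Lifting $\theta(x)$ through the embedding $F_{\mathrm{ac}} \subset \B_{\dR,F,\mathrm{ac}}^+$, I write $x = \theta(x) + \xi \cdot x^{(1)}$ with $x^{(1)} \in \B_{\dR,F,\mathrm{ac}}^+$; the identity $\ker\theta \cap \B_{\dR,F,\mathrm{ac}}^+ = \xi\B_{\dR,F,\mathrm{ac}}^+$ is immediate from $G_\sigma$-invariance of $\xi$. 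Iterating produces $c_k := \theta(x^{(k)}) \in F_{\mathrm{ac}}$ with $x - \sum_{k=0}^{n} c_k \xi^k = \xi^{n+1} x^{(n+1)}$, so the Hausdorff $\ker\theta$-adic topology forces $x = \sum_{k \geq 0} c_k \xi^k \in F_{\mathrm{ac}}[\![\xi]\!]$.

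The main obstacle is to verify that each $x^{(k)}$ remains pro-analytic, which is what allows Sen's decompletion to be applied at every step. This is the analog of lemma~\ref{lemm partial stab bdrplus}: I would pass to the fraction field $\B_{\dR,F,\mathrm{ac}}$, in which $\xi^{-1}$ is pro-analytic as the inverse of a pro-analytic unit (item~2 of lemma~\ref{ringla} applied to a field), so that $x^{(1)} = \xi^{-1}(x - \theta(x))$ is a product of pro-analytic vectors that happens to land inside $\B_{\dR,F,\mathrm{ac}}^+$. Equivalently, one can construct an operator $\partial_{\mathrm{ac}} := \frac{1}{2}\xi^{-1}\nabla_{\mathrm{ac}}$ on $(\B_{\dR,F,\mathrm{ac}}^+)^{\pa}$, where $\nabla_{\mathrm{ac}} := \nabla_\id - \nabla_\sigma$ generates the infinitesimal $\Gamma_{F,\mathrm{ac}}$-action and satisfies $\nabla_{\mathrm{ac}}(\xi) = 2\xi$, and then mimic the explicit formula from the proof of proposition~\ref{prop struc Bdrpa} to compute the $c_k$ in closed form in terms of $\partial_{\mathrm{ac}}^k(x)$.
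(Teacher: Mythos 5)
Your proposal follows the same strategy as the paper: identify $\xi = t_1/t_2$ as a $G_\sigma$-invariant, pro-analytic generator of $\ker\theta$ inside $\B_{\dR,F,\mathrm{ac}}^+$, use Sen's decompletion $\hat{F_{\mathrm{ac}}}^{\la} = F_{\mathrm{ac}}$ at the residue-field level, and iterate the decomposition $z = \theta(z) + \xi z_2$ while tracking pro-analyticity. You also supply more detail than the paper at the one genuinely delicate step (why $z_2$ stays pro-analytic), offering two valid routes. One small caveat: lemma~\ref{ringla}(2) is stated for Banach rings, and $\B_{\dR,F,\mathrm{ac}}$ is Fréchet/LF rather than Banach, so the ``inverse of a pro-analytic unit is pro-analytic'' step should be justified in the Fréchet setting; but in this concrete case it is immediate since $g(\xi^{-1}) = \eta(g)^{-1}\xi^{-1}$ with $\eta^{-1}$ a locally analytic character, so the gap is cosmetic rather than substantive.
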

\begin{proof}
Clearly, if $z \in  F_{\mathrm{ac}}[\![\frac{t_1}{t_2}]\!]$, then the corresponding power series converges to an element of $\Bdrplus$ which is invariant by $H_F$ and pro-analytic for the action of $\Gamma_F$.

Now if $z \in (\B_{\dR,F,ac}^+)^{\pa}$, we have $\theta(z) \in \hat{F_{\mathrm{ac}}}^{\la} = F_{\mathrm{ac}}$ by \cite[Thm. 3.2]{Ber14SenLa}. We can therefore write $z= \theta(z)+t_1\cdot z'$ in $\Bdrplus$. Since  $\frac{t_1}{t_2}$ belongs to $(\B_{\dR,F,ac}^+)^{\pa} \cap \Fil^1\Bdr$, we can write $z=\theta(z)+\frac{t_1}{t_2}z_2$ with $z_2$ in  $(\B_{\dR,F,ac}^+)^{\pa}$. Now we can do the same thing for $z_2$, and doing this inductively gives us the result.
\end{proof}

If $I$ is big enough, so that the corresponding annulus contains a zero of $t_1$ and $t_2$, then the localization map at the zero of $t_1$ gives an embedding $(\Bt_{F,ac}^I)^{\la} \ra (\B_{\dR,F,ac}^+)^{\pa}=F_{\mathrm{ac}}[\![\frac{t_1}{t_2}]\!]$, and it seems difficult for an element in $\Bt_{F,ac}^I$ to have an ``essential singularity at a zero of $t_2$'', even if it's after a localization at a zero of $t_1$. Moreover, it is easy to prove that the image of $(\B_{\dR,F,ac}^+)^{\pa}$ in $\Bdrplus$ does not intersect $K_\infty[\frac{t_1}{t_2}] \setminus F$ as soon as $I$ is such that the corresponding annulus contains a zero of $t_2$. It seems therefore reasonable to expect that $(\B_{\dR,F,ac}^+)^{\pa}=F$, even though we do not have a proof of that statement, except for $I$ a subinterval of $[0,\infty[$ containing $0$, which we are now going to prove.

We let $P(T)=[p](T) = T^q+pT$. 

\begin{lemm}
We have $P^{\circ k}(\phi_q^{-k}(u^p)) \ra y_\sigma$ in $\Atplus$ when $k \ra +\infty$ for the $p$-adic topology.
\end{lemm}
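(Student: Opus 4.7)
My plan is to reinterpret the sequence $P^{\circ k}(\phi_q^{-k}(u^p))$ as iterates of a single operator on $\Atplus$, and then to show that this operator is a strict $p$-adic contraction whose fixed point is $y_\sigma$. Set $R := P \circ \phi_q^{-1}$. Since $P(T) = T^q + pT$ has coefficients in $\Z_p$ and $\phi_q^{-1}$ is a ring endomorphism of $\Atplus$ fixing $\Z_p$, the operators $P$ and $\phi_q^{-1}$ commute, so $R^{\circ k} = P^{\circ k} \circ \phi_q^{-k}$ and the sequence of the lemma becomes $R^{\circ k}(u^p)$.

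I would then establish two properties of $R$. On the one hand, $y_\sigma = u_\sigma = \phi(u)$ is a fixed point of $R$: the identity $\log_{\LT}(u_\sigma) = \phi(\log_{\LT}(u))$, combined with the $\Q_p$-rationality of $\log_{\LT}$, forces $u_\sigma = \phi(u)$; applying $\phi^{-1}$ to the relation $\phi_q(u) = P(u)$ and using the commutation of $P$ with $\phi^{-1}$ then yields $P(\phi^{-1}(u)) = \phi(u)$, which rewrites as $R(\phi(u)) = \phi(u)$ since $\phi_q^{-1} \circ \phi = \phi^{-1}$. On the other hand, $R$ is a strict $p$-adic contraction: if $x \equiv y \pmod{p^n}$ in $\Atplus$ with $n \geq 1$, then $R(x) \equiv R(y) \pmod{p^{n+1}}$. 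Indeed, $\phi_q^{-1}$ preserves the ideal $p^n\Atplus$ (as $\phi_q$ fixes $p$), and a Taylor expansion combined with the fact that $P'(T) = qT^{q-1} + p$ lies in $p\Z_p[T]$ (because $q = p^2$) shows that $P(a + p^n b) \equiv P(a) \pmod{p^{n+1}}$ for all $a, b \in \Atplus$.

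To conclude, I would observe that $u^p$ and $y_\sigma = \phi(u)$ have the same reduction $\bar u^p$ in $\Etplus = \Atplus/p$, so $u^p \equiv y_\sigma \pmod p$. An immediate induction using the contraction estimate then gives $R^{\circ k}(u^p) \equiv R^{\circ k}(y_\sigma) = y_\sigma \pmod{p^{k+1}}$ for every $k \geq 0$, which proves $R^{\circ k}(u^p) \to y_\sigma$ in the $p$-adic topology of $\Atplus$. The main conceptual step is the identification of the relevant dynamics through $R$; once the commutation $P \circ \phi_q^{-1} = \phi_q^{-1} \circ P$ is noted, the remaining verifications reduce to a routine $p$-adic computation exploiting $p \mid P'(T)$.
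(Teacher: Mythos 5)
Your proof is correct in substance and organized more crisply than the paper's. Both arguments rest on the same $p$-adic estimate — since $q=p^2$, the derivatives of $P$ are divisible by $p$, so a Taylor (equivalently binomial) expansion of $P$ gains one power of $p$ — and the real difference lies in how the limit is identified. The paper shows the sequence $s_k:=P^{\circ k}(\phi_q^{-k}(u^p))$ is Cauchy by an induction on $v_p(s_{k+1}-s_k)$, then pins down the limit by evaluating $\theta\circ\phi_q^{-j}$ and invoking Lemma~5.3 of \cite{Ber14iterate}, which characterizes $y_\sigma$ via its $\theta$-compatible system. You instead recognize the sequence as the orbit $R^{\circ k}(u^p)$ of a single operator $R=P\circ\phi_q^{-1}$, verify directly that $\phi(u)$ is a fixed point of $R$ (from $\phi_q(u)=P(u)$ and the $\Zp$-rationality of $P$), and conclude by the strict contraction together with $u^p\equiv\phi(u)\pmod p$; this is more self-contained and avoids the external characterization. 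The one step you should tighten is the identification $y_\sigma=\phi(u)$: the inference ``$\log_{\LT}(u_\sigma)=\log_{\LT}(\phi(u))$ forces $u_\sigma=\phi(u)$'' implicitly uses injectivity of $\log_{\LT}$ on the relevant slice of $\Btrigplus$, which deserves a word (e.g.\ both elements reduce to $\overline{u}^p$ in $\Etplus$, so they differ by an element of $\ker\log_{\LT}$ lying in $1+\text{(small)}$, which must be trivial), or one can simply fall back on the uniqueness from \cite{Ber14iterate} that the paper uses. With that caveat addressed, the contraction-fixed-point reformulation is a genuine and worthwhile streamlining of the argument.
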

\begin{proof}
Let $s_k := P^{\circ k}(\phi_q^{-k}(u^p)) \in \Atplus$, for all $k \geq 0$. We therefore have $s_0=u^p$, and $s_{k+1} = \phi_q^{-1}(P(s_k))$. 

Let us assume that $s_{k}-s_{k-1}$ belongs to $p^b\Atplus$, with $b \geq 1$.

Then we have $s_{k+1}= \phi_q^{-1}(P(s_k))$, and we can write
$$P(s_k) = P(s_{k-1})+\sum_{j = 1}^qP^{(j)}(s_{k-1})\frac{(s_k-s_{k-1})^j}{j!}.$$
Since $b \geq 1$ and since $P^{(j)}(T) \in p\mathcal{O}_F[\![T]\!]$, this means that $P(s_k) = P(s_{k-1})+(s_k-s_{k-1})h_k$, with $h_k \in p\Atplus$. But then this means that 
$$s_{k+1}-s_k = \phi_q^{-1}(s_k-s_{k-1})\phi_q^{-1}(h_k)$$
and thus $s_{k+1}-s_k \in p^{b+1}\Atplus$.

We already know that $s_1-s_0 \in p\Atplus$ (because $\overline{s_1}=\overline{s_0}=\overline{u}^p$ mod $p$) so that the sequence $(s_k)$ converges in $\Atplus$ to an element that we will denote by $s$.

Because both $\phi$ and $\theta$ are continuous for the $p$-adic topology, we know that $\theta \circ \phi_q^{-j}(s) = \lim\limits_{k \ra +\infty}P^{\circ k}(\theta \circ \phi_q^{-k}(u^p)) = P^{\circ k}(u_{j+k}^p)$. Therefore by lemma 5.3 of \cite{Ber14iterate}, $s$ is such that $\theta \circ \phi_q^{-j}(s) = \theta \circ \phi_q^{-j}(y_\sigma)$ for all $j \in \N$, so that $s = y_\sigma$.
\end{proof}

In particular, in lemma 5.3 of \cite{Ber14MultiLa}, we can actually take $x_n$ to be equal to $P^{\circ k}(\phi_q^{-k}(u^p))$ for some $k \gg 0$. In what follows, we let $h_{\ell}(u) := P^{\circ \ell}(\phi_q^{-\ell}(u^p))$.

Let $I=[0,r_0]$, let $m \geq 0$ and let $x \in (\Bt_F^I)^{\Gamma_m-\Qp-\an}$. Then there exists $n \geq 0$ such that $|\!|\partial_\sigma(x)|\!|_{\Gamma_m} \leq p^{nk}|\!|x|\!|_{\Gamma_m}$. Moreover, by \cite[Lemm. 2.4]{Ber14SenLa}, there exists $k_0 \geq m$ such that $|\!|x|\!|_{\Gamma_k}=|\!|x|\!|$ for all $k \geq k_0$. There exists $\ell \geq k_0$ such that $h_{\ell}(u)-y_\sigma \in p^n\At^I$, and there exists $m' \geq \ell$ such that $h_\ell(u), y_\sigma \in (\Bt_F^I)^{\Gamma_{m'}-\Qp-\an}$ and such that $|\!|h_{\ell}(u)-y_\sigma|\!|_{\Gamma_s} \leq p^{-n}$ for all $s \geq m'$. 

Then for $s \geq m'$, the series $x_i:=\frac{1}{i!}\sum_{k \geq 0}(-1)^k\partial_\sigma^{i+k}(x)\frac{(y_\sigma-h_{\ell}(u))^k}{k!}$ converges in $(\Bt_F^I)^{\Gamma_s-\Qp-\an}$, and we have 
$$x = \sum_{i \geq 0}x_i(y_\sigma-h_{\ell}(u))^i$$
in $(\Bt_F^I)^{\Gamma_s-\Qp-\an}$ (this is the same as the proof of theorem 5.4 of \cite{Ber14MultiLa}).

Now let 
$$X_{\ell,s}:= \left\{x \in (\Bt_F^I)^{\Gamma_{\ell}-\Qp-\an}, x = \sum_{i \geq 0}x_i(y_\sigma-h_{\ell}(u))^i \textrm{ and } x_i \in (\Bt_F^I)^{\Gamma_{s}-\an}\right\}.$$

The above shows that any $x \in (\Bt_F^I)^{\Qp-\la}$ belongs to some $X_{\ell,s}$, $s \geq \ell \geq 0$. 

We denote by $F[\![T_1,T_2]\!]$ the set of power series with coefficients in $F$ in two variables $T_1$ and $T_2$, endowed with an $F$-linear action of $\Gal(F_p^{\LT}/F)$ given by $g(T_1) = \chi_p(g)\cdot T_1$ and $g(T_2) = \sigma(\chi_p(g))\cdot T_2$. 

\begin{prop}
There is an injective, Galois-equivariant $F$-linear map $\iota_{\ell,s}: X_{\ell,s} \ra F[\![T_1,T_2]\!]$. 
\end{prop}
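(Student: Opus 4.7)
The plan is to construct $\iota_\ell$ as a ``double Taylor expansion'': for $x \in X_\ell$, the variable $T_2$ records the expansion in powers of $y_\sigma - h_\ell(u)$ already built into the definition of $X_\ell$, while $T_1$ records, inside each $F$-analytic coefficient, a formal expansion as a power series over $F$ in the Lubin-Tate parameter $u$. The first step is to verify that the decomposition $x = \sum_{i \geq 0} x_i (y_\sigma - h_\ell(u))^i$ with $x_i \in (\Bt_K^I)^{\Gamma_\ell-F-\an}$ is uniquely determined by $x$. This mirrors the proof of Proposition~\ref{prop struc Bdrpa}: the operator $\partial_\sigma$ acts as the formal derivative in the direction $y_\sigma - h_\ell(u)$ on such an expansion, and iterated application of $\partial_\sigma$ followed by reduction modulo $y_\sigma - h_\ell(u)$ (which makes sense for the chosen $\ell$ since $y_\sigma - h_\ell(u) \in p^n \Atplus$ by the lemma preceding the definition of $X_\ell$) recovers the $x_i$ one by one.

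Next, I would build a canonical $F$-linear ring map $\alpha : (\Bt_K^I)^{\Gamma_\ell-F-\an} \to F[\![T_1]\!]$. For such a $y$, $F$-analyticity forces $\nabla_\sigma(y) = 0$, so the orbit map under $\Gamma_\ell$ is governed entirely by $\nabla_\id$: one has $g(y) = \sum_{k \geq 0}(\log \chi_p(g))^k \cdot \nabla_\id^k(y)/k!$ for $g$ near the identity. The description of $F$-analytic vectors in the Lubin-Tate setting (cf.\ \S\ref{phigammaLT} and \cite{Ber14MultiLa}) then identifies $y$ via its sequence $(\nabla_\id^k(y)/k!)_k$ with a convergent power series in the Lubin-Tate variable $u$ over $F$; reading off the coefficients produces $\alpha(y) \in F[\![T_1]\!]$. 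One then sets $\iota_\ell(x) := \sum_i \alpha(x_i) T_2^i \in F[\![T_1, T_2]\!]$, convergence being automatic.

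For Galois equivariance, I would equip $F[\![T_1, T_2]\!]$ with the $\Gamma_{F,\mathrm{ac}}$-action dictated by $g(T_1) = [\chi_p(g)](T_1)$ (well-defined over $F$ since $\chi_p(g) \in \Z_p^\times$ for $g \in \Gamma_{F,\mathrm{ac}}$, so $[\chi_p(g)](T) \in \O_F[\![T]\!]$) and $g(T_2)$ transported from the explicit transform of $g(y_\sigma - h_\ell(u))$ (which, since $\sigma\chi_p(g) = \chi_p(g)$ on $\Gamma_{F,\mathrm{ac}}$, can be computed directly from $P \in \Z_p[T]$ and the Lubin-Tate action on $u$). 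Equivariance of $\iota_\ell$ then reduces to the equivariance of $\alpha$, which follows by construction from the compatibility of $\nabla_\id$ with the Lubin-Tate action on $u$. The main obstacle will be making the coefficient-extraction in $\alpha$ fully rigorous: one has to verify that the identification of $F$-analytic $\Gamma_\ell$-analytic vectors with power series in $u$ really lands in $F[\![T_1]\!]$ and not in some completion with larger coefficients. This is precisely where the ``smallness'' of the anticyclotomic locally-analytic invariants emphasized throughout this section — reflected for instance in Proposition~\ref{plgt BI Bdr cas anticyclo} — becomes essential.
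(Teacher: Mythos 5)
Your map $\iota_\ell(x) = \sum_i \alpha(x_i) T_2^i$ treats $y_\sigma - h_\ell(u)$ as a single formal variable, but this coordinate choice does not diagonalize the Lie-algebra action, and that is where the gap lies. The paper applies $\phi_q^\ell$ and then binomially expands $(P^{\circ\ell}(y_\sigma) - P^{\circ\ell}(u^p))^i$, regrouping the result as a double series $\sum_k (P^{\circ\ell}(y_\sigma))^k A_k(u)$ in the two \emph{independent} variables $y_\sigma$ and $u$ over $F$; this regrouping is not cosmetic, it is exactly the change of coordinates $T_2' \mapsto T_2 := T_2' + P^{\circ\ell}(T_1^p)$ that separates the two conjugate Lubin-Tate directions. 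With your coordinate $T_2' = y_\sigma - h_\ell(u)$, one has $\nabla_\sigma(T_2') = t_\sigma$, and $t_\sigma$ is a power series in $y_\sigma = T_2' + P^{\circ\ell}(T_1^p)$, not in $T_2'$ alone; likewise $\nabla_\id(T_2') = -\nabla_\id(h_\ell(u)) \neq 0$. So in your coordinates the operators do not act as $T_1\frac{d}{dT_1}$ and $T_2\frac{d}{dT_2}$, whereas the Corollary immediately following this Proposition uses exactly those formulas to compute $F[\![T_1,T_2]\!]^{\nabla_1+\nabla_2=0} = F$ as the kernel of the total-degree operator.

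Transporting the Galois action from $y_\sigma - h_\ell(u)$ onto $T_2$ does make your $\iota_\ell$ equivariant by fiat, but that evades the content of the statement: the proposition is only useful because $F[\![T_1,T_2]\!]$ carries the standard diagonal action with $T_1 \leftrightarrow t_p$ and $T_2 \leftrightarrow t_\sigma$. You would need to identify your transported action with that one, and that identification is precisely the change of variables you skipped. Two smaller remarks: (i) by Theorem 4.4 of \cite{Ber14MultiLa} the $x_i$ are power series in $\phi_q^{-\ell}(u)$, not in $u$, so you should apply $\phi_q^\ell$ before reading off coefficients in $F[\![u]\!]$, as the paper does; (ii) the claim that $\sigma\chi_p(g) = \chi_p(g)$ on $\Gamma_{F,\mathrm{ac}}$ describes the kernel $G_\sigma$, not the quotient $\Gal(F_{\mathrm{ac}}/F)$.
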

\begin{proof}
Let $x \in X_{\ell}$. We can write $x = \sum_{i \geq 0}x_i(y_\sigma-h_{\ell}(u))^i$, where $x_i \in (\Bt_F^I)^{\Gamma_s-\an}$. Note that $(\Bt_F^I)^{\Gamma_s-\an} \subset \B_{F,s}^I$ by \cite[Thm. 4.4]{Ber14MultiLa} so that we can write $x_i = f_i(\phi_q^{-s}(u))$, with $f_i \in F[\![u]\!]$. 

We can write $\phi_q^{s}(x) = \sum_{i \geq 0}f_i(u)(P^{\circ s}(y_\sigma)-P^{\circ s}(u^p))^i$, so that 
$$\phi_q^{s}(x)=\sum_{i \geq 0}f_i(u)\sum_{k = 0}^i\binom{i}{k}(P^{\circ s}(y_\sigma))^k(-P^{\circ s}(u^p))^{k-i}$$
and this is equal to (if everything converges)
$$\sum_{k \geq 0}(P^{\circ s}(y_\sigma))^k\sum_{j \geq 0}f_{j+k}(u)(-P^{\circ s}(u^p))^j.$$
Let $A_k:=\sum_{j \geq 0}f_{j+k}(u)(-P^{\circ s}(u^p))^j \in F[\![u]\!]$. This is a well defined element of $F[\![u]\!]$ since $P^{\circ s}(u^p) \in u\cdot F[\![u]\!]$ and since the $f_{j+k}(u)$ belong to $F[\![u]\!]$. 
Since $P^{\circ s}(y_\sigma) \in y_\sigma\cdot F[\![y_\sigma]\!]$ (because $s \geq \ell$), the sum $\sum_{k \geq 0}(P^{\circ s}(y_\sigma))^kA_k$ defines an element of $F[\![y_\sigma,u]\!]$. Now because $t_\sigma \in y_\sigma\cdot F[\![y_\sigma]\!]$ and $t_p \in F[\![u]\!]$, this can be rewritten as an element of $F[\![T_1,T_2]\!]$. It remains to check that the map we have just constructed is well defined relative to the Galois action, which is straightforward (because $\phi_q^{-s}$ is $\Gamma_K$-equivariant and then the rest is just rewriting power series in $F[\![Y_1,Y_2]\!]=F[\![T_1,T_2]\!]$). 

In order to see that the map that we obtain this way is injective, we can see that at every step the operations we make are injective. To see that we indeed have defined a map this way, we have to prove that the $x_i$ coming from $x$ are uniquely defined, which follows from the formula given in the proof of \cite[Thm. 5.4]{Ber14MultiLa}.
\end{proof}

\begin{coro}
\label{thm ext anticyclo bad}
We have $(\Bt_F^{[0,r_0]})^{\mathrm{ac},\la}=F$.
\end{coro}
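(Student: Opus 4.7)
The plan is to interpret $(\Bt_F^{[0,r_0]})^{\mathrm{ac},\la}$ as the locally analytic vectors of $\Bt_{F,\mathrm{ac}}^{[0,r_0]}$ under the $\Gal(F_{\mathrm{ac}}/F) \cong \Zp$-action, and then feed such an element into the machinery developed just before the corollary. Let $x$ belong to this space. I first observe that $x$ is in fact locally analytic for the full $\Gamma_F$-action on $\Bt_F^{[0,r_0]}$: since $x$ is fixed by $G_\sigma = \Gal(F_\infty/F_{\mathrm{ac}})$, its $\Gamma_F$-orbit map factors through the analytic surjection of $p$-adic Lie groups $\Gamma_F \twoheadrightarrow \Gamma_F/G_\sigma = \Gal(F_{\mathrm{ac}}/F)$, on which the orbit map is locally analytic by hypothesis. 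Consequently $x \in (\Bt_K^I)^{\la}$ with $K=F$ and $I=[0,r_0]$, so by the discussion preceding the corollary there exists $\ell$ with $x \in X_\ell$, and the Galois-equivariant map $\iota_\ell : X_\ell \to F[\![T_1,T_2]\!]$ produces a formal power series $\iota_\ell(x)$.

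Next I analyze $\iota_\ell(x)$ using the $G_\sigma$-invariance of $x$. Under the identifications $T_1 = t_\sigma$ and $T_2 = t_p$ from the construction of $\iota_\ell$, the action of $g \in \Gamma_F$ on $F[\![T_1,T_2]\!]$ is the diagonal one $g \cdot T_1 = \sigma(\chi_p(g))\,T_1$ and $g \cdot T_2 = \chi_p(g)\,T_2$. For $g \in G_\sigma$ we have $\chi_p(g) = \sigma(\chi_p(g)) =: a$, and $\chi_p$ restricts to an isomorphism $G_\sigma \xrightarrow{\sim} \Zp^\times$, so the $G_\sigma$-action on $F[\![T_1,T_2]\!]$ is the uniform scaling $f(T_1,T_2) \mapsto f(aT_1, aT_2)$ with $a$ ranging over $\Zp^\times$. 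Galois-equivariance of $\iota_\ell$ combined with $G_\sigma$-invariance of $x$ thus yields, writing $\iota_\ell(x) = \sum_{m,n \geq 0} c_{m,n} T_1^m T_2^n$, that $a^{m+n} c_{m,n} = c_{m,n}$ for every $a \in \Zp^\times$; since $(1+p)^k = 1$ forces $k=0$, this is only possible when $m = n = 0$, so $\iota_\ell(x) = c_{0,0} \in F$.

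Finally, I need $\iota_\ell$ to be $F$-linear and injective so that applying injectivity to $x - c_{0,0} \in X_\ell$ (noting $\iota_\ell$ sends the constant $c_{0,0}$ to itself) yields $x = c_{0,0} \in F$. Injectivity is established by induction: if $\iota_\ell(y) = 0$ for $y = \sum_i y_i (y_\sigma - h_\ell(u))^i$ with $y_i = f_i(\phi_q^{-\ell}(u))$, then the vanishing of the formal power series $\sum_k (P^{\circ\ell}(y_\sigma))^k A_k(u)$ in $F[\![y_\sigma,u]\!]$ forces each $A_k = f_k(u) + \sum_{j \geq 1} \binom{j+k}{k} f_{j+k}(u) (-P^{\circ\ell}(u^p))^j = 0$, and since $P^{\circ\ell}(u^p) \in u^p F[\![u]\!]$, comparing $u$-valuations inductively forces every $f_i$, and hence every $y_i$, to vanish. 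The main obstacle is precisely this last step: the $G_\sigma$-invariance computation is routine once the diagonal scaling action is pinned down, but proving that $\iota_\ell$ faithfully records $x$ -- so that a formal conclusion translates back to an equality in $\Bt^{[0,r_0]}$ -- is the technically delicate point.
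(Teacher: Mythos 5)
Your proof is correct and follows essentially the same route as the paper's: reduce to the power-series ring $F[\![T_1,T_2]\!]$ via the Galois-equivariant map $\iota_\ell$ and use anticyclotomic invariance to kill all non-constant coefficients. The only differences are cosmetic---you phrase the invariance constraint at the group level (uniform scaling $f\mapsto f(aT_1,aT_2)$, $a\in\Zp^\times$), whereas the paper phrases it infinitesimally via $\nabla_1+\nabla_2$, and you are more explicit than the paper about the injectivity of $\iota_\ell$, which the paper uses implicitly.
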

\begin{proof}
Let $\nabla_1, \nabla_2$ denote respectively the maps $T_1\cdot\frac{d}{dT_1}$ and $T_2\cdot\frac{d}{dT_2}$ on $F[\![T_1,T_2]\!]$. It is clear from the definition of the Galois action of $\Gal(F_p^{\LT}/F)$ on $F[\![T_1,T_2]\!]$ that this action is locally analytic, and that the corresponding operators $\nabla_\id$ and $\nabla_\sigma$ coincide respectively with $\nabla_1$ and $\nabla_2$. 

By the previous proposition, it therefore suffices to prove that $F[\![T_1,T_2]\!]^{\nabla_1+\nabla_2=0}=K$. This is straightforward because 
$$(\nabla_1+\nabla_2)(\sum_{i,j}a_{ij}T_1^iT_2^j) = \sum_{i,j}(i+j)a_{ij}T_1^iT_2^j$$
which is equal to $0$ if and only if $a_{ij}=0$ for all $i,j \neq 0$.
\end{proof}

Those results highlight that in general, the rings $(\Bt_{K}^I)^{\la}$ could be really small, even if we restrict ourselves to the case of $p$-adic abelian extensions. However, if we assume that $K_\infty/K$ contains a cyclotomic extension, then most of those problems should disappear. Note that the case of the anticyclotomic extension is precisely a case where we removed the cyclotomic extension contained inside the Lubin-Tate extension. 

It would be interesting to know if ``containing a cyclotomic extension'' is the key component for the theory to behave properly:

\begin{question}
Are there Galois $p$-adic Lie extensions $K_\infty/K$ almost totally ramified, not containing any cyclotomic extension, such that for all compact subinterval $I$ of $[0;+\infty[$, $(\Bt_K^I)^{\la} \neq K$?
\end{question}

Note that, if we do not assume that $K_\infty/K$ is Galois but that its ramification is infinite and its Galois closure $L_\infty:=K_\infty^\Gal$ is such that $L_\infty/K$ is a $p$-adic Lie extension with Galois group $\Gamma_L:=\Gal(L_\infty/K)$, there is still a way to define locally analytic vectors attached to the extension $K_\infty/K$, in the following way: if $W$ is a $p$-adic Banach representation of $\Gamma_L$, we define the ''$K_\infty$-locally analytic vectors of $W$'' by $W^{K_\infty-\la}:=(W^{\Gamma_L-\la})^{\Gal(L_\infty/K_\infty)}$. Kummer extensions are particular cases of this setting, and in this case the theory does behave properly \cite{GP18}. It is therefore not clear what to expect if we generalize the theory to ``non Galois $p$-adic Lie extensions''.

\subsection{A particular case of the conjecture}
\label{subsection particular case}
We now explain how to prove conjecture \ref{conjecture structure good case} in a very particular case, which is already nontrivial and is a generalization of the Kummer case.

In this section, we assume that $K_\infty/K$ is a $p$-adic Lie extension which is a successive extension of $\Zp$-extensions: there exist $(K_{\infty,i})_{i \in \{0,\ldots,d\}}$ such that for all $i$, $K_{\infty,i}/K$ is Galois, $K_\infty=K_{\infty,d}$, $K_{\infty,0}=K$, and $\Gal(K_{\infty,i+1}/K_{\infty,i}) \simeq \Z_p$. We also assume that there exists $\eta : \G_K \to \Z_p^\times$ an unramified character such that $K_{\infty,1}=K_\infty^{\eta}$. In particular, this implies that $\Gamma_K$ is isomorphic to a semi-direct product $\Z_p \rtimes \cdots \rtimes \Zp$. We write $g \mapsto (c_d(g),\ldots,c_1(g))$ for the isomorphism $\Gamma_K \simeq \Z_p \rtimes \cdots \rtimes \Zp$, where if $1 \leq j \leq d$ and $g \in \Gamma_K$, $(c_j(g),\cdots,c_1(g))$ denotes the image of $g$ in the quotient $\Gal(K_{\infty,j}/K) \simeq \Z_p \rtimes \cdots \rtimes \Zp$. 

For any $i \in \{1,\cdots,d\}$, we let $g_i \in \Gal(K_{\infty}/K_{\infty,i-1})$ be such that $c_i(g_i)=1$, so that its image in the quotient $\Gal(K_{\infty,i}/K_{\infty,i-1})$ is a topological generator, and we let $\nabla_i \in \Lie(\Gamma_K)$ denote the operator corresponding to $\log g_i$. Note that since $g_i \in \Gal(K_{\infty}/K_{\infty,i-1})$, we have $c_j(g_i) = 0$ for $j < i$. Since it is clear that the $g_i$ generate $\Gamma_K$ topologically, the operators $\nabla_i$ define a basis of the Lie algebra of $\Gamma_K$. We also let $G_i = \Gal(K_{\infty,i}/K)$. 

\begin{lemm}\label{lemm tué par nabla stable}
If $x$ is a locally analytic vector of a $p$-adic Banach representation of $\Gamma_K$ such that there exists $j \geq 2$, such that for all $k \geq j$, $\nabla_k(x)=0$, then for all $\ell < j$ and for all $k \geq j$, $\nabla_k \circ \nabla_{\ell}(x) =\nabla_\ell \circ \nabla_k(x)=0$. 
\end{lemm}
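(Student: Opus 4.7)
The plan is to exploit the normality of $\Gal(K_\infty/K_{\infty,j-1})$ inside $\Gamma_K$ (which follows from the assumption that each $K_{\infty,i}/K$ is Galois) to produce an ideal in $\Lie(\Gamma_K)$ which contains $\nabla_j, \ldots, \nabla_d$, and then use the fact that on locally analytic vectors the commutator of two Lie-algebra operators equals the operator coming from their Lie bracket.

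More precisely, I would first identify the Lie subalgebra $\mathfrak{h} := \Lie(\Gal(K_\infty/K_{\infty,j-1}))$. For any $k \geq j$, the element $g_k$ belongs to $\Gal(K_\infty/K_{\infty,k-1}) \subseteq \Gal(K_\infty/K_{\infty,j-1})$, so $\nabla_k = \log g_k$ lies in $\mathfrak{h}$. Since $\dim_{\Qp} \mathfrak{h} = d - j + 1$ and the $\nabla_i$ ($i = 1, \ldots, d$) form a basis of $\Lie(\Gamma_K)$, the family $(\nabla_j, \ldots, \nabla_d)$ is a basis of $\mathfrak{h}$.

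Because $K_{\infty,j-1}/K$ is Galois, the subgroup $\Gal(K_\infty/K_{\infty,j-1})$ is normal in $\Gamma_K$, so $\mathfrak{h}$ is an ideal of $\Lie(\Gamma_K)$. In particular, for every $\ell < j$ and $k \geq j$, there exist scalars $c_j, \ldots, c_d \in \Qp$ such that $[\nabla_\ell, \nabla_k] = \sum_{i = j}^d c_i \nabla_i$ in $\Lie(\Gamma_K)$.

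Applying this identity to $x \in W^{\la}$, and using the standard fact that the Lie algebra action on locally analytic vectors satisfies $[\nabla_\ell, \nabla_k](x) = \nabla_\ell \nabla_k(x) - \nabla_k \nabla_\ell(x)$, one obtains
$$ \nabla_\ell \nabla_k(x) - \nabla_k \nabla_\ell(x) = \sum_{i \geq j} c_i \nabla_i(x) = 0, $$
since $\nabla_i(x) = 0$ for all $i \geq j$ by assumption. On the other hand, $\nabla_\ell \nabla_k(x) = \nabla_\ell(0) = 0$ trivially, hence $\nabla_k \nabla_\ell(x) = 0$ as well. The main (very mild) obstacle is ensuring that $\mathfrak{h}$ really has the expected basis; this reduces to the dimension count above, which is immediate from the successive $\Z_p$-extension hypothesis. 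Everything else is formal manipulation in the Lie algebra.
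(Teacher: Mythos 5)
Your proof is correct, and it takes a genuinely different route from the paper's. The paper argues at the \emph{group} level: it first observes (by an inductive use of the local-analyticity of $x$ along each one-parameter subgroup $\langle g_k\rangle$, $k\geq j$) that vanishing of all $\nabla_k(x)$ for $k \geq j$ forces $x$ to be fixed by an open subgroup $\Gal(K_\infty/M_j)$ of $\Gal(K_\infty/K_{\infty,j-1})$, where $M_j/K$ can be taken Galois. Normality of $\Gal(K_\infty/M_j)$ in $\Gamma_K$ then makes the closed subspace $W^{\Gal(K_\infty/M_j)}$ stable under $g_\ell$, hence under $\nabla_\ell$, and one concludes $\nabla_k\nabla_\ell(x)=0$ for $k\geq j$. (The final line of the printed proof, ``so that $\nabla_\ell(x)=0$'', appears to be a typo for this conclusion.) You instead work purely at the \emph{Lie algebra} level: $\mathfrak{h}=\Lie\bigl(\Gal(K_\infty/K_{\infty,j-1})\bigr)$ has basis $\nabla_j,\ldots,\nabla_d$ by the dimension count, and since $K_{\infty,j-1}/K$ is Galois the subgroup is normal, so $\mathfrak{h}$ is an ideal; thus $[\nabla_\ell,\nabla_k]\in\mathfrak{h}$ and the commutator formula on locally analytic vectors gives $\nabla_k\nabla_\ell(x)=\nabla_\ell\nabla_k(x)-[\nabla_\ell,\nabla_k](x)=0$. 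Both arguments hinge on exactly the same structural input (the Galois hypothesis on the tower), but your version bypasses the step of producing the auxiliary extension $M_j$ and the passage through fixed points; it is arguably cleaner and more directly reveals why normality is the relevant hypothesis. The paper's version has the small advantage of being phrased entirely in terms of group elements and fixed subspaces, without invoking the $\Lie(\Gamma_K)$-module structure on $W^{\la}$ explicitly.
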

\begin{proof}
Let $W$ be a $p$-adic Banach representation of $\Gamma_K$. Let $x$ be a locally analytic vector of $W$ which is killed by $\nabla_k$, for all $k \geq j$. By definition of the $\nabla_k$ operators, this implies that there exist $f_d,\ldots,f_j$ integers such that we have $g_k^{p^{f_k}}(x)=x$ for all $k \in \{j,\ldots,d\}$. Therefore, $x$ belongs to $W^{\Gal(K_\infty/M)}$ for some finite extension $M$ of $K_{\infty,j}$, which we can assume to be Galois over $K$. But then $g_\ell(x) \in W^{\Gal(K_\infty/M)}$ for all $\ell < j$, so that $\nabla_k \circ \nabla_\ell(x)=0$. 
\end{proof}

\begin{prop}
For any $i \in \{2,\cdots,d\}$, there exists $r_i \geq 0$ and $b_i \in \B_{K_{\infty,i}}^{\dagger,r_i}$ such that $(g_i-1)(b_i)=1$ and the image of $b_i$ in $ \Bt_{K_{\infty,i}}^I$, for $\min(I) \geq r_i$, is a locally analytic vector of $\Bt_{K_{\infty,i}}^I$ for the action of $G_i$.
\end{prop}
\begin{proof}
We only prove it for $i=d$, the proof for $i < d$ is the same replacing $\Gamma_K$ by $G_i$.

Let $V$ denote the $2$-dimensional $p$-adic representation of $\G_K$ given by
\begin{equation*}
g \mapsto 
\begin{pmatrix}
1 & c_d(g)  \\
0 & 1 
\end{pmatrix}
\end{equation*}

By the theorem of Cherbonnier-Colmez, the $(\phi,\Gamma)$-module attached to $V$ is overconvergent, so that it admits a basis on $(\B_K^{\eta})^{\dagger,r}$. If $(e_1,e_2)$ was the basis of $V$ giving rise to the matrix representation above, we see that a basis of the attached $(\phi,\Gamma)$-module on $(\B_K^{\eta})^{\dagger,r}$ is given by $(e_1 \otimes 1,e_2 \otimes 1 - e_1\otimes b)$ for some $b \in \B_K^{\dagger,r}$. The fact that this basis is invariant by the action of $\Gal(\Qpbar/K_{\infty,1})$ means that it also is invariant by the action of  $\Gal(\Qpbar/K_{\infty,d-1})$ and thus we get that $g_d(b)=b+c_d(g_b)=b+1$ by our choice of $g_d$. The fact that we can find such an element $b$ which is a locally analytic vector of $\Bt_{K_{\infty,i}}^I$ follows from corollary \ref{coro construire vla}.
\end{proof}

We let $r_b = \max(r_i)$ so that the $(b_i)$ can all be seen as elements of $(\Bt_K^{\dagger,r_b})$.

Recall that if $M_\infty^{\eta}$ is a finite extension of $K_\infty^{\eta}$ then there corresponds a finite unramified extension $\B_{M,\eta}^{\dagger}/\B_{K,\eta}^{\dagger}$ of degree $[M_\infty^{\eta}:K_\infty^{\eta}]$, and there exists $r(M) > 0$ and elements $x_1,\hdots,x_f$ in $\A_{M,\eta}^{\dagger,r(M)}$ such that $\A_{M,\eta}^{\dagger,s} = \oplus_{i=1}^f \A_{K,\eta}^{\dagger,s} \cdot x_i$ for all $s \geq r(M)$.

\begin{lemm}
\label{approx killed by nabla}
Let $M_\infty^{\eta} \subset K_\infty$ be a finite extension of $K_\infty^{\eta}$. If $r_\ell \geq r(M)$ then the $x_i$ defined above belong to $(\Bt_{\rig,K}^{\dagger,r_\ell})^{\pa}$ and are killed by $\nabla_i$ for all $i > 1$.
\end{lemm}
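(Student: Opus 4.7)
The plan is to prove the two assertions separately: pro-analyticity by applying corollary \ref{coro construire vla} to a judiciously chosen representation, and the vanishing $\nabla_i(x_j)=0$ for $i\geq 2$ by a direct Galois-theoretic computation exploiting the $H_{M,\eta}$-invariance of $x_j$.

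First I would replace $M_\infty^\eta$ by its Galois closure in $K_\infty$ — still a finite extension of $K_\infty^\eta$, since $K_\infty^\eta/K$ is Galois — to reduce to the case where $M_\infty^\eta/K$ is Galois and $H_{M,\eta}$ is normal in $\G_K$. Since $M_\infty^\eta\subset K_\infty$ forces $H_K\subset H_{M,\eta}$, the $x_j$'s, being $H_{M,\eta}$-invariant, lie in $\Bt^{H_K}=\Bt_K$, and in $\Bt_{\rig,K}^{\dagger,r_\ell}$ whenever $r_\ell\geq r(M)$.

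For the pro-analyticity, I would apply corollary \ref{coro construire vla} to the $p$-adic representation $V$ of $\G_K$ corresponding, under the $\eta$-twisted equivalence of categories recalled at the start of this subsection, to the rank-$f$ étale $(\phi,\Gamma_K')$-module $\B_{M,\eta}^\dagger$ (with $f=[M_\infty^\eta:K_\infty^\eta]$). Its restriction to $H_{K,\eta}$ is the permutation representation on the finite set $G_0:=H_{K,\eta}/H_{M,\eta}$; because $H_K$ is normal in $\G_K$ and contained in $H_{M,\eta}$, every conjugate of $H_K$ stays inside $H_{M,\eta}$, so $H_K$ acts trivially on $V$ and $V$ factors through $\Gamma_K$. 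The embedding $\D_\eta^{[r;s]}(V)\simeq \B_{M,\eta}^{[r;s]}\hookrightarrow \tilde{\D}_K^{[r;s]}(V)=\Bt_K^{[r;s]}\otimes_{\Qp}V$ sends each $b\in \B_{M,\eta}^{[r;s]}$ to $\sum_{g\in G_0}\tau_g(b)\otimes e_g$, where $\tau_g\in H_{K,\eta}$ is any lift of $g$. Applying corollary \ref{coro construire vla} to the entry of the resulting base change matrix corresponding to $b=x_j$ and $g=e$ then gives $x_j\in(\Bt_K^{[r;s]})^{\la}$, valid for every compact interval $[r;s]\subset [r_\ell,+\infty[$, hence $x_j\in(\Bt_{\rig,K}^{\dagger,r_\ell})^{\pa}$.

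For the vanishing of $\nabla_i$ with $i\geq 2$: by definition $g_i\in \Gal(K_\infty/K_{\infty,i-1})\subset \Gal(K_\infty/K_\infty^\eta)=H_{K,\eta}/H_K$, while $\Gal(K_\infty/M_\infty^\eta)=H_{M,\eta}/H_K$ has finite index $f$ in $\Gal(K_\infty/K_\infty^\eta)$. Consequently, for $n$ large enough one has $g_i^{p^n}\in \Gal(K_\infty/M_\infty^\eta)$, so any lift to $\G_K$ sits in $H_{M,\eta}$ and therefore fixes $x_j$. Enlarging $n$ further so the logarithm series converges on $x_j$, one gets $p^n\nabla_i(x_j)=\log(g_i^{p^n})(x_j)=0$, whence $\nabla_i(x_j)=0$.

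The main obstacle is the pro-analyticity step: concretely exhibiting the representation $V$ and unwinding the embedding $\D_\eta^{[r;s]}(V)\hookrightarrow \Bt_K^{[r;s]}\otimes_{\Qp}V$ to see that the $x_j$'s do appear as entries of the base change matrix of corollary \ref{coro construire vla}. Once that identification is carried out, both conclusions follow at once — pro-analyticity from the corollary, and the vanishing of $\nabla_i$ from elementary finite-index considerations.
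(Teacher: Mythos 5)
Your treatment of the vanishing $\nabla_i(x_j)=0$ for $i\geq 2$ is correct, and it is essentially what the paper means by ``straightforward as $M_\infty^\eta/K_\infty^\eta$ is finite''. For the pro-analyticity the paper defers to the proof of \cite[Thm. 4.4]{Ber14MultiLa}, whereas you propose a self-contained argument through Corollary \ref{coro construire vla}; this is a genuinely different route, and it can be made to work, but as written the step producing the representation $V$ has a gap. You take $V$ to be ``the $p$-adic representation of $\G_K$ corresponding to the rank-$f$ étale $(\phi,\Gamma_K')$-module $\B_{M,\eta}^\dagger$'', but $\B_{M,\eta}^\dagger$ is not canonically a $(\phi,\Gamma_K')$-module: even after replacing $M_\infty^\eta$ by its Galois closure, it only carries an action of $\Gal(M_\infty^\eta/K)$, which is an extension of $\Gamma_K'$ by the finite group $\Gal(M_\infty^\eta/K_\infty^\eta)$ that need not split. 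Equivalently, the $H_{K,\eta}$-representation $\Qp[H_{K,\eta}/H_{M,\eta}]$ underlying $\B_{M,\eta}^\dagger$ as a $\phi$-module need not extend to a $\G_K$-representation of dimension $f$, so the object $V$ you feed into the corollary may not exist.

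The repair is mild. Pick a finite extension $M/K$ inside $M_\infty^\eta$ with $MK_\infty^\eta=M_\infty^\eta$ and set $V=\mathrm{Ind}_{\G_M}^{\G_K}\Qp$. Since $M\subset K_\infty$ one has $H_K\subset \G_M$, and normality of $H_K$ in $\G_K$ then shows $V$ factors through $\Gamma_K$, so Corollary \ref{coro construire vla} applies. The dimension of $V$ is $[M:K]$, which may exceed $f$; $\D_\eta^{[r;s]}(V)$ decomposes by Mackey along the $H_{K,\eta}$-orbits of $\G_K/\G_M$, the trivial orbit reproduces precisely your embedding $b\mapsto \sum_{g\in G_0}\tau_g(b)\otimes e_g$ of $\B_{M,\eta}^{[r;s]}$, and the $x_i$ do appear as entries of the base-change matrix, so the conclusion $x_i\in(\Bt_K^{[r;s]})^{\la}$ follows for every compact $[r;s]\subset[r_\ell,+\infty[$ as you claimed. (A shorter variant avoiding $V$ altogether: apply Lemma \ref{lemma phigammamod gives locana} with $M$ in place of $K$ and the trivial representation to get that the $x_i$ are locally analytic for $\Gal(M_\infty^\eta/M)$, hence for $\Gal(M_\infty^\eta/K)$; since the $\Gamma_K$-orbit map of each $x_i$ factors through the one-dimensional quotient $\Gal(M_\infty^\eta/K)$, local analyticity pulls back to $\Gamma_K$.)
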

\begin{proof}
The fact that the $x_i$ are pro-analytic is a consequence of the proof of item $2$ of \cite[Thm. 4.4]{Ber14MultiLa}. The second part is straightforward as $M_\infty^{\eta}/K_\infty^{\eta}$ is finite. 
\end{proof}

If $M_\infty^{\eta}$ is a finite extension of $K_\infty^{\eta}$, and if $I$ is a compact subinterval of $[0;+\infty[$ such that $\min(I) \geq r(M)$,  we let $\A_{M,\eta}^I$ be the completion of $\A_{M,\eta}^{\dagger,r(M)}$ for $V(\cdot,I)$, and we let $\B_{M,\eta}^I = \A_{M,\eta}^I[1/p]$.

\begin{lemm}
\label{lemm approx}
If $x \in \A_K^{\dagger,r}$ and if $k,n \in \N$ then there exists $M_\infty^{\eta}$ a finite extension of $K_\infty^{\eta}$, $m \geq 0$ and $y \in \phi^{-m}(\A_{M,\eta}^{\dagger,p^mr})$ such that 
$x-y \in \pi^j\At^{\dagger,r}+u^k\Atplus$.
\end{lemm}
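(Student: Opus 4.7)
The plan is to proceed by induction on $n$, the base case $n=0$ being trivial (take $y=0$). The fundamental ingredient is an approximation statement in characteristic $p$. Since $K_\infty = \bigcup_{M} M_\infty^{\eta}$ over finite extensions $M_\infty^{\eta}/K_\infty^{\eta}$ contained in $K_\infty$, we have $H_K = \bigcap_{M} H_{M,\eta}$. Combined with the fact that $\Et$ is complete for the $\overline{u}$-adic (valuation) topology, this implies that $\Etplus^{H_K}$ is the $\overline{u}$-adic closure of $\bigcup_{M,m} \phi^{-m}(\E_{M,\eta}^+)$. The appearance of Frobenius preimages is forced by the fact that $K_\infty/K_\infty^{\eta}$ is infinite, so that any reasonable description of $\E_K$ in terms of the $\E_{M,\eta}$ must incorporate a perfection-like step.

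For the inductive step, reduce $x$ modulo $\pi=p$ to obtain an element $\overline{x} \in \Etplus^{H_K}$ satisfying the overconvergence condition at radius $r$. The approximation statement above produces $M$, $m \geq 0$ and $\overline{y}' \in \phi^{-m}(\E_{M,\eta}^+)$ with $\overline{x} - \overline{y}' \in \overline{u}^k \Etplus$. Lifting $\overline{y}'$ to some $y' \in \phi^{-m}(\A_{M,\eta}^{\dagger,p^m r})$ — which lies in $\At^{\dagger,r}$ because $\phi$ multiplies the radius of overconvergence by $p$ — we obtain $x - y' \in \pi \At^{\dagger,r} + u^k \Atplus$. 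Decompose $x - y' = \pi x_1 + u^k z_1$ with $x_1 \in \At^{\dagger,r}$ and $z_1 \in \Atplus$, choosing $x_1$ and $z_1$ to be $H_M$-invariant (possible since both $x$ and $y'$ are $H_M$-invariant, and the ambiguity in the decomposition is an $H_M$-invariant shift).

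To apply the induction to $x_1$ one needs a mild strengthening of the statement, allowing $x$ to be $H_L$-invariant for any finite extension $L$ of some $K_\infty^{\eta}$ inside $K_\infty$ rather than strictly $H_K$-invariant. This strengthened version yields $y_1 \in \phi^{-m_1}(\A_{M_1,\eta}^{\dagger,p^{m_1}r})$ with $x_1 - y_1 \in \pi^{n-1}\At^{\dagger,r} + u^k\Atplus$. Taking $y = y' + \pi y_1$, which lies in $\phi^{-m_2}(\A_{M_2,\eta}^{\dagger,p^{m_2}r})$ for $M_2$ a finite extension of $K_\infty^{\eta}$ containing both $M_\infty^{\eta}$ and $M_{1,\infty}^{\eta}$ and $m_2 = \max(m,m_1)$, gives $x - y \in \pi^n \At^{\dagger,r} + u^k \Atplus$.

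The main obstacle is the characteristic-$p$ approximation statement: proving that every $H_K$-invariant element of $\Etplus$ is $\overline{u}^k$-adically close to an element of $\phi^{-m}(\E_{M,\eta}^+)$ for some finite $M$ and some $m$. This should follow from the theory of the field of norms applied to the strictly APF extension $K_\infty/K$, using that a Frobenius preimage of a uniformizer of $\E_{M,\eta}$ allows one to expand $H_K$-invariant elements as power series, together with the $H_K = \bigcap_M H_{M,\eta}$ identity. A secondary bookkeeping difficulty is to track the varying invariance groups in the induction, since $y'$ is only $H_M$-invariant (not $H_K$-invariant) and so the remainder $x_1$ changes its natural invariance ring from step to step.
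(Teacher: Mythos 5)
Your strategy diverges from the paper's in a way that introduces unnecessary difficulties and leaves a real gap. The crucial point you miss is that $\overline{x}$ is \emph{not} just an $H_K$-invariant element of $\Etplus$: since $x \in \A_K^{\dagger,r}$ (the unperfected ring), reduction mod $\pi$ lands in $\E_K^+$, and $\E_K = \bigcup_M \E_{M,\eta}$ \emph{exactly} (no Frobenius preimages, no $\overline{u}$-adic approximation — just an increasing union of residue fields, by the $H_K=\bigcap_M H_{M,\eta}$ identity plus compactness). So the ``characteristic-$p$ approximation statement'' you flag as the main obstacle is a red herring: at the residue level the statement needed is strictly simpler than what you propose, and your version — density of $\bigcup_{M,m}\phi^{-m}(\E_{M,\eta}^+)$ in $\Etplus^{H_K}$ — addresses a larger ring than the one $\overline{x}$ actually lives in, while remaining unproven in your sketch. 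The paper exploits $\E_K = \bigcup_M \E_{M,\eta}$ to run a plain $p$-adic iteration \emph{inside the integral rings} $\A_{M_i,\eta}$, with no overconvergence or Frobenius twist in sight, then uses the Teichmüller expansion to conclude that the truncations $x^{(n)}$ lie in $\At_{M_n,\eta}\cap\At_K^{\dagger,r}=\At_{M_n,\eta}^{\dagger,r}$. Only then does it invoke Colmez's density result (\cite[Cor.~8.11]{colmez2008espaces}) to pass from the perfected overconvergent ring $\At_{M_n,\eta}^{\dagger,r}$ to $\phi^{-m}(\A_{M,\eta}^{\dagger,p^m r})$.

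Your induction on $n$ also does not close as written: after one step, $x_1$ is $H_M$-invariant and overconvergent, so it lives in $\At_M^{\dagger,r}$ — the \emph{perfected} ring — not in any $\A_{L}^{\dagger,r}$. Your proposed strengthening (allowing $x$ to be $H_L$-invariant for varying $L$) only shuffles the Galois group and does not address the perfected/unperfected mismatch; you would in fact need the lemma for $\At$-type inputs from the start, which is a genuinely stronger statement. The missing ingredient that resolves all of this in one stroke is Colmez's density of $\A_{M,\infty,\eta}^{\dagger,r}=\bigcup_m\phi^{-m}(\A_{M,\eta}^{\dagger,p^m r})$ in $\At_{M,\eta}^{\dagger,r}$, which your proposal never invokes. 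In short: your decomposition into ``char-$p$ approximation $+$ induction'' is not wrong in spirit, but the first half is an unproven (and avoidably strong) claim, and the second half breaks on the perfected ring; the paper's two-stage structure — exact residue-field identification, Teichmüller truncation, then perfection via Colmez — sidesteps both problems.
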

\begin{proof}
By reducing mod $\pi$, we obtain that $\overline{x} \in \E_K$. But $\E_K = \bigcup \E_M$ where $M$ goes through the set of finite extensions of $K_{\infty,\eta}$ contained in $K_\infty$. In particular, there exists a finite extension $M_{0}$ of $K_{\infty,\eta}$, contained in $K_\infty$, and $y_0 \in \A_{M_0}$ such that $x-y_0 \in p\A_K$, since $\A_{M_0,\eta} \subset \A_K$. Since $\frac{x-y_0}{p} \in \A_K$, the same arguments show that there exists a finite extension $M_1$ of $K_{\infty,\eta}$, contained in $K_\infty$, and $y_1 \in \A_{M_1,\eta}$ such that 
$\frac{x-y_0}{p}-y_1 \in \pi\A_K$, so that $x-y_0-py_1 \in \pi\A_K$, and we can without loss of generality assume that $M_0 \subset M_1$. By induction, we find $y_0,y_1,\cdots,y_n$ in $\A_{M_n,\eta}$, with $M_n$ finite extension of $K_{\infty,\eta}$ contained in $K_\infty$, such that $x-y_0-py_1-\cdots-p^ny_n \in \pi^{n+1}\A_K$. Let $z_n = y_0+\cdots+\pi^ny_n$. Let $\sum_{i \geq 0}p^i[x_i]$ be the way $x$ is written in $\At_K = W(\Et_K)$. Then $x^{(n)}:= \sum_{i=0}^np^i[x_i]$ is such that $x-x^{(n)} \in \pi^{n+1}\At_K$, and thus $x^{(n)}-z_n \in \pi^{n+1}\At_K$. In particular, since $z_n \in \At_{M_n,\eta}$ by construction, we deduce that the $x_i$ all belong to $\Et_{M_n,\eta}$ for $i \leq n$, and thus $x^{(n)} \in \At_{M_n,\eta}$. 

Since $x \in \At_K^{\dagger,r}$, we have in particular that $x^{(n)} \in \At_K^{\dagger,r}$. By corollary 8.11 of \cite{colmez2008espaces}, $\A_{M_n,\infty,\eta}^{\dagger,r}$ is dense in $\At_{M_n,\eta}^{\dagger,r}$ for the topology induced by $V(\cdot,r)$, so that we can find $y \in \A_{M_n,\infty,\eta}^{\dagger,r}$ such that $x^{(n)}-y \in \pi^n\At^{\dagger,r}+u^k\Atplus$. We thus have $x-y = (x-x^{(n)})+(x^{(n)}-y) \in \pi^n\At^{\dagger,r}+u^k\Atplus$.
\end{proof}

Lemma \ref{lemm approx} shows that for any $I=[r;s]$ with $r \geq r_b$, and any integer $n$ we can find elements $b_n^{\ell}$ such that $b_\ell-b_n^\ell \in p^n\At^I$ for all $\ell \in \{2,\ldots,d\}$, which by lemma \ref{approx killed by nabla} belong to $(\Bt_K^I)^{\la}$ and are killed by $\nabla_j$, for all $j \in \{2,\ldots,d\}$. Since they are locally analytic vectors, we let $m = m(n,I)$ be such that all these elements, along with the elements $b_\ell$, belong to $(\Bt_K^I)^{\Gamma_m-\an}$.

\begin{prop}
\label{proof theo induction}
Let $I = [r;s]$ with $r \geq r_b$ and let $m \geq m(n,I)$. Let $\ell \in \{2,\ldots,d\}$ and let $x \in (\Bt_K^I)^{m-\an}$ be such that for all $k > \ell$, $\nabla_k(x)=0$. Then there exist $(x_j)_{j \in \N} \in (\Bt_K^I)^{\Gamma_m-\an}$ such that $|\!|x_jp^{nj}|\!| \ra 0$, for all $k \geq \ell, \nabla_k(x_j)=0$ and $x=\sum_{j \geq 0}x_j(b_\ell-b_n^{\ell})^j$.
\end{prop}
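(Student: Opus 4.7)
The plan is to construct the decomposition explicitly via a Taylor-type formula in the variable $b_\ell - b_n^\ell$, mimicking the argument used in the proof of Theorem 5.4 of \cite{Ber14MultiLa} (and reproduced earlier in \S5 of the paper with $y_\sigma - h_\ell(u)$ in place of $b_\ell - b_n^\ell$).

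First I would record the relevant derivative relations. Since $g_\ell(b_\ell) = b_\ell + 1$, one has $(g_\ell - 1)(b_\ell) = 1$ and $(g_\ell - 1)^k(b_\ell) = 0$ for $k \geq 2$, so $\nabla_\ell(b_\ell) = \log(g_\ell)(b_\ell) = 1$. By Lemma \ref{approx killed by nabla}, $\nabla_\ell(b_n^\ell) = 0$ and hence $\nabla_\ell(b_\ell - b_n^\ell) = 1$. Similarly, for $k > \ell$, the element $b_\ell \in \B_{K_{\infty,\ell}}^{\dagger,r_\ell}$ is fixed by $g_k$ (as $g_k \in \Gal(K_\infty/K_{\infty,\ell})$) and $b_n^\ell$ is killed by $\nabla_k$ again by Lemma \ref{approx killed by nabla}, so $\nabla_k(b_\ell - b_n^\ell) = 0$. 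Finally, by hypothesis $\nabla_k(x) = 0$ for $k > \ell$, and Lemma \ref{lemm tué par nabla stable} then gives $\nabla_k \nabla_\ell^{\,i}(x) = 0$ for all $i \geq 0$ and $k > \ell$.

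Next I would set
\begin{equation*}
x_j := \frac{1}{j!}\sum_{k \geq 0} \frac{(-1)^k}{k!}\, \nabla_\ell^{j+k}(x)\,(b_\ell - b_n^\ell)^k,
\end{equation*}
and verify the four required properties. (a) Convergence and analyticity: after enlarging $m$ if necessary so that both the $b_n^\ell$ and $x$ are $\Gamma_m$-analytic in $\Bt_K^I$, the choice of the approximations gives $\|b_\ell - b_n^\ell\| \leq p^{-n}$ (with $n$ taken large relative to $m$), while the $\Gamma_m$-analyticity of $x$ provides the estimate $\|\nabla_\ell^{j+k}(x)/(j+k)!\| \leq C \cdot p^{m(j+k)}\|x\|_{\Gamma_m}$. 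This yields convergence of the series in the Banach space $(\Bt_K^I)^{\Gamma_m-\an}$, and gives the growth bound $\|x_j\| p^{nj} \to 0$ required in the statement. (b) $\nabla_\ell(x_j) = 0$: applying $\nabla_\ell$ term-by-term, the Leibniz contribution from $(b_\ell - b_n^\ell)^k$ produces $k(b_\ell - b_n^\ell)^{k-1}$, and the two resulting series cancel after reindexing $k \mapsto k+1$ in the first. (c) $\nabla_k(x_j) = 0$ for $k > \ell$: this follows immediately from the combined vanishings noted in the previous paragraph, since $\nabla_k$ passes through each term. (d) Reconstruction of $x$: a direct rearrangement gives
\begin{equation*}
\sum_{j \geq 0} x_j (b_\ell - b_n^\ell)^j \;=\; \sum_{N \geq 0} \nabla_\ell^{N}(x)\,(b_\ell - b_n^\ell)^{N}\cdot\frac{1}{N!}\sum_{k=0}^{N} \binom{N}{k}(-1)^k \;=\; x,
\end{equation*}
since the inner binomial sum is $1$ for $N=0$ and vanishes for $N \geq 1$.

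The main obstacle is the convergence estimate in (a): one must calibrate the approximation level $n$ against the analyticity radius $m$ of $x$ so that $p^{m(j+k)} p^{-nk} \to 0$ (in $k$, for each fixed $j$) and uniformly in a way compatible with the $\Gamma_m$-analytic norm, rather than just the sup norm on $\Bt_K^I$. Concretely, this is what forces the statement to be formulated for a choice of $n$ (equivalently, for $m = m(n,I)$) large enough; once this calibration is made, closedness of $(\Bt_K^I)^{\Gamma_m-\an}$ as a Banach subspace of $\Bt_K^I$ ensures that the limits $x_j$ inherit the required analyticity. The other steps are purely formal manipulations of power series identical in spirit to the cyclotomic computation already carried out in the paper.
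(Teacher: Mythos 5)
Your proposal is correct and follows the same route as the paper's own proof: both define $x_j = \frac{1}{j!}\sum_{k\ge 0}\frac{(-1)^k}{k!}\nabla_\ell^{j+k}(x)(b_\ell-b_n^\ell)^k$, check convergence by calibrating the $\Gamma_m$-analyticity bound on $\nabla_\ell^{j+k}(x)$ against $\|b_\ell-b_n^\ell\|\le p^{-n}$, and invoke Lemma~\ref{lemm tué par nabla stable} together with the vanishing $\nabla_k(b_\ell-b_n^\ell)=0$ for $k>\ell$ to get the extra $\nabla_k$-annihilation. The paper compresses the convergence and telescoping computations to a citation of \cite[Thm.~5.4]{Ber14MultiLa}, whereas you spell them out (including the reconstruction via the binomial identity $\sum_{k=0}^N\binom{N}{k}(-1)^k=\delta_{N,0}$), but the underlying argument is identical.
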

\begin{proof}
Let $x \in (\Bt_K^I)^{\Gamma_m-\an}$. By \cite[Lemm. 2.6]{Ber14SenLa}, there exists $n \geq 1$ such that for all $j \geq 1$, $|\!|\nabla_\ell^j(x)|\!|_{\Gamma_m} \leq p^{nj}|\!|x|\!|$ for all $\ell \in \{2,\ldots,d\}$. Let $$x_j = \frac{1}{j!}\sum_{k \geq 0}(-1)^k\frac{(b_\ell-b_n^\ell)}{k!}\nabla_\ell^j(x).$$
Similarly to the proof of \cite[Thm. 5.4]{Ber14MultiLa}, the series converges in $(\Bt_K^I)^{\Gamma_m-\an}$ to an element $x_j$ such that $\nabla_\ell(x_j)=0$. Moreover, by construction of the $b_\ell$ and $b_n^\ell$, we have $\nabla_{k}(b_\ell-b_n^\ell)=0$ for all $k > \ell$, and thus using lemma \ref{lemm tué par nabla stable}, the $x_j$ are killed by $\nabla_k$, $k > \ell$. 
\end{proof}

\begin{prop}
\label{prop locana killed by all nabla is cyclo}
Let $I = [r,s]$ and let $m \geq m(I,n)$. Then there exists $M$ a finite extension of $K_{\infty}^\eta$, depending only on $m$, and $k \geq 0$ depending only on $s$, $m$ and $M$, such that 

$$(\Bt_K^I)^{\Gamma_m-\an, \nabla_d = \cdots = \nabla_2 = 0} \subset \phi^{-k}(\B^{p^kI}_{M,\eta}).$$
\end{prop}
\begin{proof}
Note that if $x \in (\Bt_K^I)^{\Gamma_m-\an}$ and is killed by $\nabla_i$ for some $i \in \{2,\cdots,d\}$, then this means that the orbit map $g \mapsto g(x)$, from the $p$-adic Lie group $G_i:=g_i^{\Zp}$ to $\Bt_K^I$, is an analytic function on $G_i \cap \Gamma_m$ which becomes constant on some compact open subgroup of $G_i$ since $\nabla_i(x) = 0$. It is therefore constant on $G_i \cap \Gamma_m$. It follows that if we let $M_\infty^\eta= (K_\infty)^{g_d^{p^m}=1,\cdots,g_2^{p^m}=1}$, then $x$ is invariant by $\Gal(K_\infty/M_\infty^\eta)$. Note that since $\Gamma_K$ is topologically generated by the $g_i$ for $i=1 \ldots d$, $M_\infty^\eta$ is a finite extension of $K_\infty^\eta$. 

Now we can conclude using the same argument as in the proof of theorem 4.2.9 of \cite{GP18}. We let $f = [M_\infty^\eta:K_\infty^\eta]$ and we let $r(M) > 0$ and $x_1,\cdots,x_f$ in $\A_{M,\eta}^{\dagger,r(M)}$ be such that $\A_{M,\eta}^{\dagger,s} = \oplus_{i=1}^f\A_{K,\eta}^{\dagger,s} \cdot x_i$ for all $s \geq r(M)$. Note that the proof of item 2 of \cite{Ber14MultiLa} shows that the $x_i$ are locally analytic for the action of $\Gal(K_\infty/K)$, so that there exists $k \geq m$ such that the $x_i$ are all $\Gamma_{m'}$-analytic. Then we have 

\begin{eqnarray*}
(\Bt_K^I)^{\Gamma_{k}-\an,\Gal(K_\infty/M_\infty^\eta)} &=& (\Bt_{M,\eta}^I)^{(\Gamma_{K,\eta})_{k}-\an} \\
 &=&     (\oplus_{i=1}^{f}\Bt_{K,\eta}^I \cdot x_i)^{(\Gamma_{K,\eta})_{k}-\an}\\
 &=&  \oplus_{i=1}^{f}(\Bt_{K,\eta}^I)^{(\Gamma_{K,\eta})_{k}-\an}\cdot x_i\\
 &\subset & \oplus_{i=1}^f\phi^{k}(\B_{K,\eta}^{p^kI})\cdot x_i
\end{eqnarray*}
where the second equality follows from proposition \ref{lainla and painpa}, and the last inclusion follows from the specialization of \cite[Thm.4.4]{Ber14MultiLa} to the twisted cyclotomic case.

and since $k \geq m$, we obtain that $(\Bt_K^I)^{\Gamma_{m}-\an,\Gal(K_\infty/M_\infty^\eta)=1} \subset \oplus_{i=1}^f\phi^{k}(\B_{K,\eta}^{p^kI})\cdot x_i \subset \phi^{k}(\B_{M,\eta}^{p^kI})$, which is what we wanted.
\end{proof}

We can now prove the conjecture:

\begin{theo}
\label{theo conjecture in d variables}
Let $K_\infty/K$ be a $p$-adic Lie extension of rank $d$ which is a successive extension of $\Zp$-extensions over a cyclotomic extension. Then for $n \gg 0$, there exist $d$ elements $x_{1,n},\ldots,x_{d,n}$ in $(\Bt_K^I)^{\Gamma_n-\an}$ such that 
$(\Bt_K^I)^{\Gamma_n-\an}$ is the set of power series $\sum_{\i = (i_1,\dots,i_d) \in \N^d}a_{\i}x_{j,n}^{i_j}$ in the variables $(x_{i,n})_{i \in \{1,\cdots,d\}}$ with coefficients in $K$ such that the series $\sum_{\i = (i_1,\dots,i_d) \in \N^d}a_{\i}x_{j,n}^{i_j}$ converge in $(\Bt_K^I)^{\Gamma_n-\an}$.
\end{theo}
\begin{proof}
Let $I = [r,s]$ with $r \geq r_b$ and let $m \geq m(n,I)$. Let $x \in (\Bt_K^I)^{\Gamma_m-\an}$. 

We start by applying proposition \ref{proof theo induction} with $\ell=d$, so that there is no condition on the nabla operators. Therefore, there exist $(x_j)_{j \in \N} \in (\Bt_K^I)^{\Gamma_m-\an}$ such that $|\!|x_jp^{nj}|\!| \ra 0$, $\nabla_d(x_j)=0$ and $x=\sum_{j \geq 0}x_j(b_\ell-b_n^{\ell})^j$.

Now each $x_j$ belongs to $(\Bt_K^I)^{\Gamma_m-\an}$, and is such that $\nabla_d(x_j)=0$ so that we can apply proposition \ref{proof theo induction} to each $x_j$ with $\ell=d-1$, so that there exist $(x_{jk})_{j,k \in \N}$ in $(\Bt_K^I)^{\Gamma_m-\an}$ such that $|\!|x_jp^{n(j+k)}|\!| \ra 0$, for all $k \geq d-1, \nabla_k(x_{jk})=0$ and $x_j=\sum_{k \geq 0}x_{jk}(b_{d-1}-b_n^{d-1})^k$.

We thus have $x = \sum_{k,j \in \N}x_{jk}(b_d-b_n^d)^j(b_{d-1}-b_n^{d-1})^k$, where $|\!|x_jp^{n(j+k)}|\!| \ra 0$, for all $k \geq d-1, \nabla_k(x_{jk})=0$. We can now apply proposition \ref{proof theo induction} to each $x_{jk}$ with $\ell = d-2$. 

Inductively, we find $(x_{\i})_{\i \in \N^{\{2,\cdots,d\}}}$ such that $|\!|x_{\i}^{n|\i|}|\!| \ra 0$, where $|\i| = \sum_{j=2}^{d}i_j$, for all $k > 1, \nabla_k(x_{\i})=0$ and

$$x = \sum_{\i \in \N^{\{2,\cdots,d\}}}x_{\i}\prod_{j=2}^{d}(b_{j}-b_n^{j})^{i_j}.$$ 

By proposition \ref{prop locana killed by all nabla is cyclo}, the elements $x_{\i}$ all belong to $\phi^{-k}(\B^{p^kI}_{M,\eta})$ for some finite extension $M_\infty^\eta$ of $K_{\infty}^\eta$, depending only on $m$, and $k \geq 0$ depending only on $s$, $m$ and $M$. 

Proposition 7.5 of \cite{colmez2008espaces} and its analogue in the twisted cyclotomic case show that the elements of $\B^J_{M,\eta}$ are power series in one variable over a subfield of $K$ which converge on some annulus depending on $J$, and so we are done. 
\end{proof}

\section{Locally analytic vectors for Robba rings}
In this section we study what happens when we take the locally analytic vectors attached to Lubin-Tate extensions in the corresponding Robba rings $\Bt_{\rig}^\dagger$. In particular, we show that those locally analytic vectors recover objects that were defined by Colmez using completely different methods in \cite{colmez2014serie}. We then use this result to explain why locally analytic vectors are usually not the right object to consider when working with Fréchet rings, and recall some properties satisfied by the corresponding attached modules. Finally, we prove that if $V$ is an $F$-analytic representation of $V$, then $(\Btrigplus \otimes V)^{H_K,\Gamma-\la} \simeq (\Bt_{\rig}^\dagger \otimes V)^{H_K,\Gamma-\la}$.

\subsection{Locally analytic vectors in Robba rings}
If $T$ is a variable and $L$ is a finite extension of $\Qp$, we let $L\langle \langle T \rangle \rangle$ denote the set of power series in $T$ with coefficients in $L$ and with infinite radius of convergence. 

\begin{prop}
\label{prop F-la for Robba}
We have $(\Bt_{\rig,K}^\dagger)^{\la} = (\Bt_{\rig}^+)^{\la} = K\langle \langle t_\pi \rangle \rangle$.
\end{prop}
\begin{proof}
Let $r \geq 0$ and let $z \in (\Bt_{\rig}^{\dagger,r})^{\la}$. It is therefore $\Gamma_n$-analytic for some $n \geq 0$, so that for any $s \geq r$, $z$ is a $\Gamma_n$-$F$-analytic vector of $\Bt^{[r;s]}$. By applying enough times $\phi_q$ to item 1. of \cite[Thm. 4.4]{Ber14MultiLa}, we have that the images of $z$ in $\Bt^{[r;s]}$ all belong to $\B_K^{[r;s]}$ as long as $s$ is such that $r_n \leq s$, where $r_n = p^{nh-1}(p-1)$ was defined in \S 1.3. Taking the inverse limit, this implies that $z \in \B_{\rig,K}^{\dagger,r}$. 

Since $\phi$ commutes with the Galois action, the reasoning above also applies to $\phi_q^{-1}(z)$. Therefore, for all $k \geq 0$, $\phi_q^{-k}(z) \in \B_{\rig,K}^{\dagger}$. This implies that $z$ belongs to the ring $K \langle \langle t_\pi \rangle \rangle$. Indeed, in the cyclotomic case this is \cite[Prop. 3.9]{colmez2014serie}, and for the general case this follows from the same arguments, using the dictionary developped by Colmez in the Lubin-Tate case in \cite[\S 2]{colmezLT}.

To finish the proof, it suffices to notice that any element of $K\langle \langle t_\pi \rangle \rangle$ is indeed $F$-locally analytic (and is actually $\Gamma_0$-analytic). 
\end{proof}

Proposition \ref{prop F-la for Robba} already shows that the set of $F$-analytic vectors of $\Bt_{\rig,K}^\dagger$ is really small compared with the set of $F$-pro-analytic vectors of $\Bt_{\rig,K}^\dagger$. 

We now explain what is $\D_{\rig,K}^{\dagger}(V)^\la$ and prove that its rank as an $E\brax$-module is too small in general. 

Given a $(\phi,\Gamma)$-module $\D$ over $E$ (in the cyclotomic setting), Colmez has defined \cite[\S 3.3]{colmez2014serie} a module denoted by $\D \boxtimes  \{0\}$ by $\cap_{n \geq 0}\phi^n(\D)$, which is a free $(\phi,\Gamma)$-module over $E\langle \langle t \rangle \rangle$ of rank $\leq \dim(V)$ by \cite[Thm. 3.20]{colmez2014serie}. 

\begin{prop}
\label{prop Bcrochu computes D box 0}
Let $V$ be an $F$-analytic representation of $\G_K$. Then 
$$(\Bt_\rig^\dagger \otimes_{\Qp}V)^{H_K,\Gamma_K-\la} = \D_{\rig,K}^\dagger(V)^\la = \cap_{n \geq 0}\phi_q^n(\D_{\rig,K}^\dagger(V)).$$
\end{prop}
\begin{proof}
Let $r \geq 0$ be such that $\D_{\rig}^\dagger(V)$ and all its structures are defined over $\B_{\rig,K}^{\dagger,r}$ and let $I$ be a compact subinterval of $[r,+\infty[$ such that $I \cap qI \neq \emptyset$. Let $x \in (\D_{\rig,K}^{\dagger,r}(V))^{\Gamma_n-\an}$.

Let $m \geq n$ be such that there exists a basis $(b_1,\ldots,b_d)$ of $\D^I(V)$ of $\Gamma_m$-analytic vectors of $\D^I(V)$ (this is possible by the same argument as in \S 2.1 of \cite{KR09}). Then by proposition \ref{lainla and painpa}, this implies that
$$(\D^I(V))^{\Gamma_m-\an} = \oplus_{i=1}^d(\B_K^I)^{\Gamma_m-\an}\cdot b_i.$$

Since $\D_{\rig}^\dagger(V)$ is a $\phi_q$-module, the elements $\phi_q(b_1),\ldots,\phi_q(b_d)$ form a basis of $\D^{qI}(V)$, and are $\Gamma_m$-analytic vectors of $\D^{qI}(V)$ since $\phi_q$ commutes with the Galois action. In particular, we get that 
$$(\D^{qI}(V))^{\Gamma_m-\an} = \oplus_{i=1}^d(\B_K^{qI})^{\Gamma_m-\an}\cdot \phi_q(b_i).$$
Applying inductively the same argument, we see that for $\ell \gg 0$, we have 
$$(\D^{q^{\ell}I}(V))^{\Gamma_m-\an} = \oplus_{i=1}^d(\B_K^{q^{\ell}I})^{\Gamma_m-\an}\cdot \phi_q^{\ell}(b_i).$$

But by applying $\phi_q$ enough times to item 1. of \cite[Thm. 4.4]{Ber14MultiLa}, we see that for $\ell$ big enough, $(\B_K^{q^{\ell+1}I})^{\Gamma_m-\an} = \phi_q((\B_K^{q^{\ell}I})^{\Gamma_m-\an})$, so that for $\ell \gg 0$, $(\D^{q^{\ell+1}I}(V))^{\Gamma_m-\an} = \phi_q((\D^{q^{\ell}I}(V))^{\Gamma_m-\an}$. Therefore, the image of the element $x \in (\D_{\rig,K}^\dagger(V))^{\Gamma_n-\an}$ in $(\D^{q^{\ell+1}I}(V))$ is also in $\phi_q((\D^{q^{\ell}I}(V))$, for $\ell \gg 0$, and thus $x \in \phi(\D_{\rig,K}^\dagger(V))^{\Gamma_n-\an}$. This proves that 
$$\D_{\rig,K}^\dagger(V)^\la = \cap_{n \geq 0}\phi_q^n((\D_{\rig,K}^\dagger(V))^{\la}) \subset \cap_{n \geq 0}\phi_q^n(\D_{\rig,K}^\dagger(V)).$$

Note that $(\Bt_\rig^\dagger \otimes_{\Qp}V)^{H_K} = \Bt_{\rig,K}^\dagger \otimes_{\B_{\rig,K}^\dagger}\D_{\rig,K}^\dagger(V)$.
Using item 3. of \cite[Thm. 4.4]{Ber14MultiLa} and the proof of Theorem 10.4 of ibid, we know that  
$$(\tilde{\D}_{\rig,K}^\dagger(V))^{\Gamma_K-\pa}=\bigcup_{m \geq 0}\phi_q^{-m}(\D_{\rig,K}^\dagger(V)).$$

Since $\Gamma_n$-analytic vectors are in particular also pro-analytic vectors of $\Gamma_K$, this means that 
$$(\tilde{\D}_{\rig,K}^\dagger(V))^{\Gamma_n-\an}=\bigcup_{m \geq 0}(\phi_q^{-m}(\D_{\rig,K}^\dagger(V)))^{\Gamma_n-\an}),$$
by taking the $\Gamma_n$-analytic vectors, and thus $(\Bt_\rig^\dagger \otimes_{\Qp}V)^{H_K,\Gamma_K-\la} = \D_{\rig,K}^\dagger(V)^\la$ since the latter is stable by taking inverse powers of $\phi_q$. 

It remains to prove that $\D_{\rig,K}^\dagger(V)^\la \supset \cap_{n \geq 0}\phi_q^n(\D_{\rig,K}^\dagger(V))$. 

If $x \in \cap_{n \geq 0}\phi_q^n(\D_{\rig,K}^\dagger(V))$, then $x$ belongs to a free $K\langle \langle t_\pi \rangle \rangle$-module which is $\Gamma_K$-stable so that for $g \in \Gal(K_\infty/K)$, $g(x) = \Mat(g)\cdot x$ where $\Mat(g) \in \GL_d(K\langle \langle t_\pi \rangle \rangle)$, so that the Galois action on $x$ is locally analytic. This finishes the proof.
\end{proof}

In particular, the following result of Colmez shows that in general the module $\D_{\rig,K}^\dagger(V)^\la$ is too small:

\begin{prop}
\label{prop colmez too small}
Let $V$ be a two dimensional irreducible representation of $\G_{\Qp}$. If $V$ is not trianguline then $\D_{\rig,K}^\dagger(V)\boxtimes  \{0\} = 0$. 
\end{prop}
\begin{proof}
This is item (i) of theorem 3.23 of \cite{colmez2014serie}. 
\end{proof}

\begin{rema}
\label{remark colmez semistable nonadmiss}
Theorem 3.23 of \cite{colmez2014serie} also says that if $V$ is a semistable, noncrystalline $2$-dimensional representation, then $\D_{\rig,K}^\dagger(V)\boxtimes  \{0\}$ is a $(\phi,\Gamma)$-module of rank $1$ over $E\langle \langle t \rangle \rangle$.  
\end{rema}

\subsection{$\phi$-modules on $L\langle \langle t_\pi \rangle \rangle$}
By proposition \ref{prop Bcrochu computes D box 0}, to any $E$-representation $V$ of $\G_K$ we can attach a module on $E\langle \langle t_\pi \rangle \rangle$, which is endowed with a Frobenius $\phi_q$ and an operator $\nabla$ coming from the action of the Lie algebra of $\Gamma_K$. Note that $\phi_q$ and $\Gamma_K$ act on $E\langle \langle t_\pi \rangle \rangle$ by
$$\phi_q(t_\pi)= \pi t_\pi, \quad g(t_\pi) = \chi_\pi(g)t_\pi.$$

We can also define an operator $\nabla$ on $E\langle \langle t_\pi \rangle \rangle$ by $\nabla_u = t_\pi\frac{d}{dt_\pi}$.

As a matter of fact, $\phi$-modules on $E\langle \langle t_\pi \rangle \rangle$ were already studied by Colmez in \cite[3.1]{colmez2014serie} and the results proved by Colmez show that $\phi$-modules on $E\langle \langle t_\pi \rangle \rangle$ are not as bad as one may think. Be careful that what we call $E\langle \langle t_\pi \rangle \rangle$ corresponds in the notations of Colmez to $E{\{\{t_\pi\}\}}$. In this section, we recall Colmez's results on $\phi$-modules on $E\langle \langle t_\pi \rangle \rangle$. 

\begin{defi}
A $(\phi_q,\Gamma_K)$-module on $E\langle \langle t_\pi \rangle \rangle$ is a finite free $E\langle \langle t_\pi \rangle \rangle$-module, endowed with semilinear actions of $\phi_q$ and $\Gamma_K$ which commute one to another and such that $\phi_q$ is an isomorphism. 

A $(\phi_q,\nabla)$-module on $E\langle \langle t_\pi \rangle \rangle$ is a finite free $E\langle \langle t_\pi \rangle \rangle$-module, endowed with semilinear actions of $\phi_q$ and $\nabla$ which commute one to another and such that $\phi_q$ is an isomorphism. 
\end{defi}

A $(\phi_q,\Gamma_K)$-module on $E\langle \langle t_\pi \rangle \rangle$ gives rise to a $(\phi_q,\nabla)$-module on $E\langle \langle t_\pi \rangle \rangle$ by taking the same $\phi$-structure and taking $\nabla$ to be the operator $\frac{\log(g)}{\log\chi_\pi(g)}$ for $g$ close enough to $1$. 

The ring $E\langle \langle t_\pi \rangle \rangle$ can be interpreted \textit{via} analytic functions, as it is the projective limit of the rings of analytic functions on the disks $v_p(x) \geq -ne$ for $n \in \N$. Those rings are principal Banach rings and therefore $E\langle \langle t_\pi \rangle \rangle$ is a Fréchet-Stein ring, which in particular implies that any closed submodule of a free module of rank $d$ is free of rank $\leq d$ and that a submodule of finite type of a free finite type module is closed and thus free. Moreover, Newton polygons theory show that an element $f \in E\langle \langle t_\pi \rangle \rangle$ does not vanish if and only if $f \in E^\times$, so that $(E\langle \langle t_\pi \rangle \rangle)^\times = E^\times$.

\begin{lemm}
\label{lemm descent M to M/vEbrax}
Let $M$ be a rank $d$ $\phi_q$-module on $E\langle \langle t_\pi \rangle \rangle$ and let $v \in M$ be such that there exists $\alpha \in E^\times$ such that $\phi_q(v) = \alpha v$. Then there exists $k \in \N$ such that $t_\pi^{-k}v \in M$ and $M/E\langle \langle t_\pi \rangle \rangle t_\pi^{-k}v$ is free of rank $d-1$ on $E\langle \langle t_\pi \rangle \rangle$.
\end{lemm}
\begin{proof}
See \cite[Lemm. 3.4]{colmez2014serie}.
\end{proof}

Let $M$ be a $\phi_q$-module on $E\langle \langle t_\pi \rangle \rangle$ and let $\overline{M} = M/t_\pi M$. If $P \in E[X]$ is unitary of degree $d$ and irreducible, then we let $M_P$ (resp. $\overline{M}_P$) denote the set of elements $v \in M$ (resp. in $\overline{M}$) such that $P(\phi_q)^n\cdot v = 0$ for $n \gg 0$ and if $k \in \N$, we let $P[k]$ be the polynomial $\pi^{kd}P(X/\pi^k)$.

\begin{theo}
\label{theo structure phi modules on Ebrax}
If $M$ is a $\phi_q$-module of rank $d$ on $E\langle \langle t_\pi \rangle \rangle$, there exists a basis $e_1,\ldots,e_d$ of $M$ in which the matrix of $\phi$ is $A+N$, where $A \in \GL_d(L)$ is semisimple and invertible, and $N$ is nilpotent and commutes with $A$. Moreover, $N$ splits into $N= N_0+t_\pi N_1+\ldots$, where $N_i \in M_d(L)$ sends the kernel $M_P$ of $P(A)$ into the one $M_{P[-i]}$ of $P(\pi^iA)$ for all $P$ (and thus in particular the sum is finite).
\end{theo}
\begin{proof}
See \cite[Thm. 3.6]{colmez2014serie}.
\end{proof}

Given a $(\phi_q,\Gamma_K)$-module $D$ on $E\langle \langle t_\pi \rangle \rangle$, we say that an element $v$ of $D$ is proper for the action of $\phi_q$ and $\Gamma_K$ if there exists an $F$-analytic character $\delta : K^\times \to E^\times$ such that $\phi(v)= \delta(\pi)v$ and $g(v)=\delta(\chi_\pi(g))v$ for all $g \in \Gamma_K$. 

\begin{lemm}
\label{lemm delta proper}
Given a $(\phi_q,\Gamma_K)$-module $D$ of rank $1$ on $E\langle \langle t_\pi \rangle \rangle$, $F$-analytic, with basis $e$, then there exists an $F$-analytic character $\delta : K^\times \ra E^\times$ such that $e$ is proper for $\delta$.
\end{lemm}
\begin{proof}
This just follows from the fact that a rank $1$ $(\phi_q,\Gamma_n)$-module on $E\langle \langle t_\pi \rangle \rangle$ has a unique basis $e$, up to multiplication by an element of $(E\langle \langle t_\pi \rangle \rangle)^{\times} = E^\times$.
\end{proof}

\subsection{Frobenius regularity}
We now explain how to use the fact that our rings are embedded with a Frobenius in order to show some regularity property. This section is in the same spirit as \cite[\S 3.1 and \S 3.2]{Ber02}.

\begin{lemm}
\label{lemm regfrob}
Let $h$ be a positive integer. Then
$$\bigcap_{s=0}^{+\infty}\pi^{-hs}\At^{\dagger,q^{-s}r}=\Atplus \quad \textrm{and} \quad \bigcap_{s=0}^{+\infty}\pi^{-hs}\At^{\dagger,q^{-s}r}_{\rig} \subset \Btrigplus.$$
\end{lemm}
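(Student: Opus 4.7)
The plan is to exploit the valuation $V(\cdot,\rho)$ to detect membership in $\Atplus$ or $\Btrigplus$: the core observation is that the hypothesis provides a lower bound on $V(x, q^{-s}r)$ that is linear in $s$, whereas any ``bad'' Witt coefficient (with $v_\E < 0$) or ``essential singularity'' at the boundary of the disk would force an exponential blow-up in $s$, yielding a contradiction.

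For the first equality, the inclusion $\Atplus \subset \bigcap_s \pi^{-hs}\At^{\dagger,q^{-s}r}$ is immediate since any $x \in \Atplus = W(\Etplus)$ has Witt coefficients with $v_\E(x_k) \geq 0$, so $V(x,\rho) \geq 0$ for every $\rho > 0$. For the reverse inclusion, I will take $x = \sum_{k \geq 0} p^k[x_k] \in \At$ in the intersection. Since $\pi \in \Atplus$ satisfies $V(\pi,\rho) = 1/e$ (with $e$ the ramification index of $F/\Qp$) independently of $\rho$, the hypothesis $\pi^{hs}x \in \At^{\dagger,q^{-s}r}$ translates into $V(x, q^{-s}r) \geq -hs/e$ for every $s \geq 0$; unpacking $V(x,\rho) = \inf_k\bigl(k + \tfrac{p-1}{p\rho}v_\E(x_k)\bigr)$, this reads
$$k + \frac{(p-1)q^s}{pr}\, v_\E(x_k) \geq -\frac{hs}{e}, \quad \text{for all } k, s \geq 0.$$
If some $v_\E(x_{k_0})$ were negative, the left side for $k = k_0$ would tend to $-\infty$ exponentially fast in $s$, contradicting the linear lower bound; hence $v_\E(x_k) \geq 0$ for all $k$, i.e.\ $x \in \Atplus$.

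For the second statement, the same estimate $V(x, q^{-s}r) \geq -hs/e$ holds for any $x \in \bigcap_s \pi^{-hs}\At_{\rig}^{\dagger,q^{-s}r}$, but now the argument must be run intrinsically through the family of valuations since elements of $\At_{\rig}^{\dagger,r'}$ are defined by completion and do not admit a concrete Witt-vector expansion. The bound implies that $V(x,\rho)$ is finite and bounded below for arbitrarily small $\rho > 0$, so $x$ lies in $\Bt_{\rig}^{\dagger,\rho}$ for every such $\rho$; invoking the identification of $\Btrigplus$ with $\bigcap_{\rho > 0}\Bt_{\rig}^{\dagger,\rho}$, i.e.\ with the Fréchet completion of $\Btplus$ for the family $(V(\cdot,\rho))_{\rho > 0}$, one concludes $x \in \Btrigplus$. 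The main obstacle will be to cleanly carry out the valuation-theoretic comparison in the completion $\At_{\rig}^{\dagger,r'}$ (where no explicit Witt expansion is available) and to justify the last identification with the proper Fréchet topology in hand; the conceptual heart of the proof, however, remains the same exponential-versus-linear dichotomy.
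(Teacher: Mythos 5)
Your treatment of the first identity is sound: the ``exponential versus linear'' dichotomy is exactly the idea underlying \cite[Lemm.\ 3.1]{Ber02}, which the paper's proof simply cites, and once one has $V(x,q^{-s}r)\geq -hs/e$ the Witt-coordinate argument correctly rules out any $v_{\E}(x_{k_0})<0$.

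The second inclusion, however, has a genuine gap. From $V(x,q^{-s}r)\geq -hs/e$ you conclude that ``$V(x,\rho)$ is finite and bounded below for arbitrarily small $\rho$''; but a lower bound that tends linearly to $-\infty$ in $s$ does not by itself give a \emph{uniform} lower bound on $V(x,\rho)$ as $\rho\to 0$, which is what membership in $\Btrigplus$ requires. Worse, the identification $\Btrigplus=\bigcap_{\rho>0}\Bt_{\rig}^{\dagger,\rho}$ that you invoke is false: the element $1/v$ already lies in $\Btplus[1/v]\subset\Bt_{\rig}^{\dagger,\rho}$ for every $\rho>0$, yet $V(1/v,\rho)\to-\infty$ as $\rho\to0$, so $1/v\notin\Btrigplus$. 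The missing ingredient is the concavity of $\rho\mapsto V(x,\rho)$ regarded as a function of $(p-1)/(p\rho)$ (it is an infimum of affine functions — the $p$-adic maximum-modulus principle): concavity together with the linear-in-$s$ lower bound at the exponentially spaced radii $q^{-s}r$ forces the uniform estimate $V(x,\rho)\geq V(x,r)$ for all $0<\rho\leq r$, and one must then argue (say by approximating by elements of $\Btplus[1/v]$ and reducing to the integral case already treated) that an element of $\Bt_{\rig}^{\dagger,r}$ with $\inf_{0<\rho\le r}V(\cdot,\rho)>-\infty$ in fact lies in $\Btrigplus$ — the correct replacement for the false $\bigcap_{\rho}$ identification. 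Without these steps the argument does not close.
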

\begin{proof}
This is \cite[Lemm. 3.1]{Ber02} when $K=\Qp$. The generalization when $K$ is a finite extension of $\Qp$ is straightforward.
\end{proof}

\begin{prop}
\label{prop frobreg crochu}
Let $r,v$ be two positive integers, and let $A \in \M_{v \times r}(\Bt_{\rig}^\dagger)$. If $P \in \GL_v(K\brax)$ is such that $P \in \M_v(K[t_\pi])$ and such that $A = P\phi_q^{-1}(A)$ then $A \in \M_{v \times r}(\Btrigplus)$. 
\end{prop}
\begin{proof}
Write $A$ as $(a_{ij})$ and $P$ as $(p_{ij})$. Let $h_0$ be such that the $\pi^{h_0}p_{ik}$ belong to $\O_F[t_\pi]$, and let $n$ be the highest degree of the $p_{ik}$ as polynomials in $t_\pi$. The assumption on the relation between $P$ and $A$ can be translated as:
$$p_{i1}\phi^{-1}(a_{1j})+\cdots+p_{iv}\phi^{-1}(a_{vj}) = a_{ij} \quad \forall i \leq v, j \leq r.$$
Let $c > 0$ and $r \geq 0$ be such that the $a_{ij}$ belong to $p^{-c}\At_{\rig}^{\dagger,r}$. Using the relation between $P$ and $A$, this implies that the $a_{ij}$ belong to $p^{-h_0-c}\At_{\rig}^{\dagger,r/q}\cdot\O_K[t_\pi]_n$, where $\O_K[t_\pi]$ denotes the ring of polynomials in $t_\pi$ with degree $\leq n$. Note that if $t_\pi \in \pi^{-\beta}\At_{\rig}^{\dagger,r}$, then since $\phi_q^{-1}(t_\pi) = \frac{1}{\pi}t_\pi$, we get that $t_\pi \in \pi^{1-\beta}\At_{\rig}^{\dagger,r/q}$ and thus $t_\pi \in \pi^{-\beta}\At_{\rig}^{\dagger,r/q^{\ell}}$ for all $\ell \geq 0$. We therefore have that the $a_{ij}$ belong to $\pi^{-h_0-c-n\beta}\At_{\rig}^{\dagger,r/q}$, and applying the result inductively, the $a_{ij}$ belong to $\pi^{-c-h_0\ell-n\beta\ell}\At_{\rig}^{\dagger,rq^{-\ell}}$. We can thus apply lemma \ref{lemm regfrob} to the $p^{c}a_{ij}$, which shows that they belong to $\Btrigplus$, as we wanted.
\end{proof}
 
\begin{prop}
\label{prop Btn equiv Btrigdaggercrochu}
Let $V$ be an $F$-analytic $E$-representation of $\G_K$. Then the morphism  
$$(\Btrigplus \otimes V)^{H_K,\Gamma_K-\la} \rightarrow (\Bt_{\rig}^\dagger\otimes V)^{H_K,\Gamma_K-\la}$$
induced by the inclusion $\Btrigplus \subset \Bt_{\rig}^{\dagger}$, is an isomorphism of $(\phi_q,\nabla)$-modules on $E\brax$. 
\end{prop}
\begin{proof}
Let $(v_1,\ldots,v_r)$ and $(d_1,\ldots,d_v)$ be respectively an $E$-basis of $V$ and an $E\brax$-basis of $(\Bt_{\rig}^{\dagger}\otimes_{E}V)^{\la}$. There exists $A \in \M_{r \times v}(\Bt_{\rig}^\dagger)$ such that $(d_i)=A(v_i)$. Let $P \in \GL_v(E\brax)$ be the matrix of $\phi_q$ in the basis $(d_i)$. By theorem \ref{theo structure phi modules on Ebrax}, we can assume that the basis $(d_i)$ of $(\Bt_{\rig}^{\dagger}\otimes_{E}V)^{\la}$ is such that $P \in \M_v(K[t_\pi])$. We then have $\phi_q(A) = PA$ and thus $A = \phi_q^{-1}(P)\phi_q^{-1}(A)$. By proposition \ref{prop frobreg crochu}, we have $A \in M_{r \times v}(\Btrigplus)$ and hence 
$$(\Bt_{\rig}^{\dagger}\otimes_{E}V)^{\la} \subset (\Btrigplus\otimes_{E}V)^{\la}.$$
\end{proof}

\section{Applications to trianguline representations}
We now explain how some of the rings previously introduced provide some results towards the question of the existence of a ring of periods for trianguline representations. We will start by recalling the notions of trianguline representations and refinements.

In a previous version of this paper, we claimed that trianguline representations of $\G_{\Qp}$ were admissible for the ring $\cal{C}^{\la}(\Gamma_K,\Btrigplus)_1$ but there was a gap in the proof and the claim is actually not true. We do expect though that if such a ring exists then it has to be some intermediate ring $\B$ between $\cal{C}^{\la}(\Gamma_K,\Btrigplus)_1$ and the rings $\cal{C}^{\la}(\Gamma_K,\Bt^I)_1$, but it is not clear at all ``how many periods we have to add'' to $\cal{C}^{\la}(\Gamma_K,\Btrigplus)_1$. These constructions naturally extend to the $F$-analytic Lubin-Tate case.

\subsection{Trianguline representations and refinements}
We start by recalling the definitions of trianguline representations and some associated properties. The notion of trianguline representations was introduced by Colmez in \cite{colmez2008representations}. Here  we choose to follow Berger's and Chenevier's definitions \cite{BerChe10} instead of Colmez's.

\begin{defi}
\label{defi trianguline}
We say that an $E$-representation $V$ of $\G_K$ is split trianguline if $\D_{\rig}^\dagger(V)$ is a successive extension of $(\phi,\Gamma_K)$-modules of rank $1$ over $E \otimes_{\Qp}\B_{\rig,K}^\dagger$. 

We say that an $L$-representation $V$ of $\G_K$ is trianguline if there exists a finite extension $E$ of $L$ such that the $E$-representation $E \otimes_L V$ is split trianguline.

We say that an $E$-representation $V$ of $\G_K$ is potentially split trianguline (resp. potentially trianguline) if there exists a finite extension $K'$ of $K$ such that $V_{|\G_{K'}}$ is split trianguline (resp. trianguline).
\end{defi}

Definition \ref{defi trianguline} can be equivalently stated in terms of $B$-pairs. We quickly recall that a $B$-pair is a pair $W = (W_e,W_{\dR}^+)$, where $W_e$ is a free $\B_e := (\Btrigplus[1/t])^{\phi=1}$-module of finite rank endowed with a continuous semi-linear action of $\G_K$, and $W_{\dR}^+$ is a $\G_K$-stable lattice in $W_{dR}:= \Bdr \otimes_{\B_e}W_e$. To a $p$-adic representation $V$ of $\G_K$ one can attach a $B$-pair $W(V)$ by $W(V) = (\B_e \otimes_{\Qp}V,\Bdrplus \otimes_{\Qp}V)$. If $E$ is a finite extension of $\Qp$, one extends the definition of $B$-pairs to $E$-linear objects, and one gets objects called $\BEK$-pairs in \cite{BerChe10} or $E-B$-pairs of $\G_K$ in \cite{Nakapieds}. Those objects are pairs $W = (W_e,W_{\dR}^+)$, where $W_e$ is a free $\B_{e,E}:=E \otimes_{\Qp}\B_e$-module of finite rank endowed with a continuous semi-linear action of $\G_K$, and $W_{\dR}^+$ is a $\G_K$-stable lattice in $W_{dR}:= (E \otimes_{\Qp} \Bdr) \otimes_{\B_{e,E}}W_e$.

The category of $\BEK$-pairs is equivalent to the one of $(\phi,\Gamma_K)$-modules over $E \otimes_{\Qp}\B_{\rig,K}^\dagger$, and thus an $E$-representation $V$ of $\G_K$ is split trianguline if the attached $\BEK$-pair is a successive extension of rank $1$ $\BEK$-pairs. 

\begin{lemm}
Let $V$ be an $F$-analytic representation. Then the following are equivalent:
\begin{enumerate}
\item $V$ is split trianguline.
\item The Lubin-Tate $(\phi_q,\Gamma_K)$-module $\D_{\rig}^\dagger(V)$ is a successive extension of $F$-analytic Lubin-Tate $(\phi_q,\Gamma_K)$-modules of rank $1$.
\end{enumerate}
\end{lemm}
\begin{proof}
See \cite[Thm. 4.11]{PnoteBpairs}.
\end{proof}

For a $\B_{e,E}$-representation, we say that it is split triangulable if it is a successive extension of rank $1$ $\B_{e,E}$-representations.

\begin{lemm}
An $E$-representation $V$ of $\G_K$ is split trianguline if and only if the corresponding $\B_{e,E}$-representation is split triangulable as a $\B_{e,E}$-representation of $\G_K$.
\end{lemm}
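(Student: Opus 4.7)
The plan is to deduce the equivalence from the corresponding statement for $B$-pairs, using the remark following Definition \ref{defi trianguline} which translates split triangulability of $V$ into the existence of a successive extension of the attached $\BEK$-pair $W(V)=(W_e,W_\dR^+)$ by rank $1$ $\BEK$-pairs. Since the forgetful functor $(W_e,W_\dR^+)\mapsto W_e$ is exact and sends rank $1$ $\BEK$-pairs to rank $1$ $\B_{e,E}$-representations, the forward direction is immediate: any triangulation of $W(V)$ restricts on $\B_e$-parts to a triangulation of $W_e$.

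For the converse, I would promote a given triangulation $0=W_e^{(0)}\subset \cdots \subset W_e^{(d)}=W_e$ of the $\B_{e,E}$-representation to a triangulation of the full $B$-pair by defining, for each $i$,
$$ W_\dR^{+,(i)} := W_\dR^+ \cap \bigl( W_e^{(i)}\otimes_{\B_{e,E}} \B_{\dR,E} \bigr) $$
inside $W_e\otimes_{\B_{e,E}}\B_{\dR,E}$. Each $W_\dR^{+,(i)}$ is $\G_K$-stable by construction, and since $\B_\dR^+$ is a discrete valuation ring with fraction field $\B_\dR$, any finitely generated torsion-free $\B_\dR^+$-submodule of a $\B_\dR$-vector space is free of rank equal to that of its $\B_\dR$-span; this yields that $W_\dR^{+,(i)}$ is a $\B_{\dR,E}^+$-lattice of the correct rank in $W_e^{(i)}\otimes\B_{\dR,E}$.

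It remains to check that the successive quotients $W^{(i)}/W^{(i-1)}:=(W_e^{(i)}/W_e^{(i-1)},\,W_\dR^{+,(i)}/W_\dR^{+,(i-1)})$ are rank $1$ $B$-pairs, i.e.\ that the $\B_{\dR,E}^+$-module $W_\dR^{+,(i)}/W_\dR^{+,(i-1)}$ is a lattice and not merely torsion-free. Because $W_\dR^{+,(i-1)}=W_\dR^{+,(i)}\cap (W_e^{(i-1)}\otimes\B_{\dR,E})$ by construction, the quotient injects into $(W_e^{(i)}/W_e^{(i-1)})\otimes\B_{\dR,E}$ and is thus torsion-free, and a rank count after inverting $t$ shows that it generates the whole rank $1$ quotient space, so the DVR argument applies again. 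The main obstacle I anticipate is precisely this lattice bookkeeping, namely checking that intersection with a sub-$\B_e$-representation and formation of quotients both preserve the $\B_\dR^+$-lattice property; once settled, passing back through Berger's equivalence of categories between $B$-pairs and $(\phi,\Gamma_K)$-modules over the Robba ring converts our triangulation of $W(V)$ into a triangulation of $\D_{\rig}^\dagger(V)$, showing that $V$ is split trianguline.
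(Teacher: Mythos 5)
Your proof is correct and supplies a direct argument where the paper simply cites \cite[Coro. 3.2]{BMTensTriang}. The forward direction is handled exactly as one would expect: the forgetful functor from $\BEK$-pairs to $\B_{e,E}$-representations is exact and preserves ranks, so a triangulation of the $B$-pair restricts to a triangulation of $W_e$. For the converse, your key construction $W_\dR^{+,(i)} := W_\dR^+ \cap (W_e^{(i)}\otimes_{\B_{e,E}}\B_{\dR,E})$ is the right one, and the ``lattice bookkeeping'' you were worried about does go through: the identity $W_\dR^{+,(i-1)} = W_\dR^{+,(i)} \cap (W_e^{(i-1)}\otimes\B_{\dR,E})$ guarantees that the successive quotient injects into $(W_e^{(i)}/W_e^{(i-1)})\otimes\B_{\dR,E}$, hence is torsion-free; combined with the observation that $W_\dR^{+,(i)}[1/t] = W_e^{(i)}\otimes\B_{\dR,E}$ (because $W_\dR^+$ is a lattice, so every element of $W_e^{(i)}\otimes\B_{\dR,E}$ lands in $W_\dR^+$ after multiplication by a power of $t$), the quotient is a finitely generated torsion-free $\B_{\dR,E}^+$-module whose $t$-localization has rank $1$, hence a rank $1$ lattice. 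One small clarification worth adding: $\B_{\dR,E}^+ = E\otimes_{\Qp}\B_\dR^+$ is not itself a DVR but a finite product of copies of $\B_\dR^+$ indexed by the embeddings of $E$; the argument applies componentwise and the conclusion (freeness of rank $1$) still holds because the assumption that $W_e^{(i)}/W_e^{(i-1)}$ is free of rank $1$ over $\B_{e,E}$ forces each component after $\otimes\B_{\dR,E}$ to have $\B_\dR$-dimension $1$. With that caveat noted, the argument is complete and, via Berger's equivalence, indeed produces the required triangulation of $\D_{\rig}^\dagger(V)$.
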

\begin{proof}
See \cite[Coro. 3.2]{BMTensTriang}.
\end{proof}

\begin{prop}
\label{prop triang cat stable by}
The categories of split trianguline representations and of trianguline representations are stable by subobjects, quotients, direct sums and tensor products.
\end{prop}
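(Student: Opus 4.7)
The plan is to reduce everything to the split case and then verify the four closure properties at the level of the $(\phi,\Gamma_K)$-modules $\D_{\rig}^\dagger(V)$ on the Robba ring (since by definition a representation is trianguline iff it is split trianguline after a finite extension of the field of coefficients, which commutes with all four operations up to enlarging the extension; for tensor products of two trianguline representations one just takes a common finite extension $E$ over which both become split).

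First I would treat the two easy cases. For direct sums $V \oplus W$, if $0 = D_0 \subset \cdots \subset D_r = \D_{\rig}^\dagger(V)$ and $0 = D'_0 \subset \cdots \subset D'_s = \D_{\rig}^\dagger(W)$ are triangulations, then concatenating gives a filtration of $\D_{\rig}^\dagger(V\oplus W) = \D_{\rig}^\dagger(V) \oplus \D_{\rig}^\dagger(W)$ whose graded pieces are all the $D_i/D_{i-1}$ and $D'_j/D'_{j-1}$, hence rank $1$. For tensor products, I would filter $\D_{\rig}^\dagger(V \otimes W) = \D_{\rig}^\dagger(V) \otimes \D_{\rig}^\dagger(W)$ by the sub-modules $\sum_{i+j \leq k} D_i \otimes D'_j$; the successive quotients are direct sums of $(D_i/D_{i-1}) \otimes (D'_j/D'_{j-1})$, which are tensor products of rank $1$ $(\phi,\Gamma_K)$-modules and hence rank $1$, giving a triangulation after refinement.

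For stability under subobjects, let $W \hookrightarrow V$ be a subrepresentation with $V$ split trianguline via a filtration $(D_i)$ of $D = \D_{\rig}^\dagger(V)$. The functor $\D_{\rig}^\dagger$ is exact, so $E := \D_{\rig}^\dagger(W)$ is a sub-$(\phi,\Gamma_K)$-module of $D$. Define $E_i$ to be the saturation of $E \cap D_i$ inside $E$; since $E_i/E_{i-1}$ embeds (after saturation) into the rank $1$ $(\phi,\Gamma_K)$-module $D_i/D_{i-1}$ on $E \otimes_{\Qp} \B_{\rig,K}^\dagger$, it is either zero or of rank $1$ (any nonzero saturated sub-$(\phi,\Gamma_K)$-module of a rank $1$ module is the whole module). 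Dropping the trivial steps produces a triangulation of $E$, so $W$ is split trianguline. Quotients are handled dually, either by the analogous intersection argument applied to the image filtration in $V/W$, or by duality combined with the subobject case (rank $1$ $(\phi,\Gamma_K)$-modules are stable under duality). The non-split versions follow after enlarging $E$ once.

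The main subtle point is the subobject and quotient case, specifically the need to saturate the intersections $E \cap D_i$ to stay inside the category of $(\phi,\Gamma_K)$-modules over $E \otimes_{\Qp} \B_{\rig,K}^\dagger$ and the fact that saturated sub-objects of a rank $1$ $(\phi,\Gamma_K)$-module on the Robba ring are $0$ or the whole thing. The rest is a routine bookkeeping exercise. Alternatively, by the lemma just established, one may pass to the equivalent formulation in terms of $\B_{e,E}$-representations and invoke the same stability properties for triangulable $\B_{e,E}$-representations, for which the same arguments apply with essentially no change.
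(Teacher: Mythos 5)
Your proof is correct in substance. The paper itself proves nothing here: it cites \cite[Prop.~3.3]{BMTensTriang} for stability under subobjects and quotients and calls the direct-sum and tensor-product cases a ``straightforward consequence of the definition.'' You fill in exactly the arguments that the paper delegates, and they are the standard ones: concatenation of filtrations for direct sums, the filtration $\sum_{i+j\le k}D_i\otimes D'_j$ (refined so each step has rank $1$) for tensor products, intersection with the filtration for subobjects, and the dual/image argument for quotients. One small cleanup is warranted in the subobject step. Since each $D_i$ is saturated in $D$ (being a direct summand), the intersection $E\cap D_i$ is already saturated in $E=\D_{\rig}^\dagger(W)$, so the explicit saturation you insert is harmless but redundant. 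More to the point, your parenthetical justification — ``any nonzero saturated sub-$(\phi,\Gamma_K)$-module of a rank $1$ module is the whole module'' — is not quite the fact you need, because the image of $(E\cap D_i)/(E\cap D_{i-1})$ in $D_i/D_{i-1}$ need not be saturated and you are not claiming equality. What you need (and what does hold) is that a nonzero closed sub-$(\phi,\Gamma_K)$-submodule of a rank~$1$ module over $E\otimes_{\Qp}\B_{\rig,K}^\dagger$ is itself free of rank $1$; this follows from the Bézout/coadmissibility properties of the Robba ring (a finitely generated torsion-free module is free, and closed submodules of coadmissible modules are coadmissible) together with \cite[Lemm.~1.30]{Nakapieds} for freeness over $E\otimes_{\Qp}\B_{\rig,K}^\dagger$. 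With that adjustment your argument is the same one the cited reference makes precise, and the reduction to the split case via a common finite coefficient extension is handled correctly.
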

\begin{proof}
The fact that it is stable by quotients and subobjects follows from \cite[Prop. 3.3]{BMTensTriang}. For direct sums and tensor products it is a straightforward consequence of definition \ref{defi trianguline}.
\end{proof}

Let $D$ be a $(\phi,\Gamma_K)$-module of rank $d$ over $E \otimes_{\Qp}\B_{\rig,K}^\dagger$ and equipped with a strictly increasing filtration $(\Fil_i(D))_{i=0\dots d}$
:
$$\Fil_0(D):=\{0\} \subsetneq \Fil_1(D) \subsetneq \cdots \subsetneq \Fil_i(D) \subsetneq \cdots \subsetneq \Fil_{d-1}(D)
\subsetneq \Fil_d(D):=D,$$
of $(\phi,\Gamma_K)$-submodules which are direct summand as $E \otimes_{\Qp}\B_{\rig,K}^\dagger$-modules. We call 
such a $D$ a triangular $(\phi,\Gamma_K)$-module  over $E \otimes_{\Qp}\B_{\rig,K}^\dagger$, and the 
filtration $\cal{T} := (\Fil_i(D))$ a triangulation of $D$ over $E \otimes_{\Qp}\B_{\rig,K}^\dagger$.

Let $D$ be a triangular $(\phi,\Gamma_K)$-module. 
By proposition 3.1 of \cite{colmez2008representations}, each 
$$\mathrm{gr}_i(D):=\Fil_i(D)/\Fil_{i-1}(D), \, \, \, 1\leq i \leq d,$$ 
is isomorphic to the $(\phi,\Gamma_K)$-module on $E \otimes_{\Qp}\B_{\rig,K}^\dagger$ attached to a character $\delta_i$ for some unique $\delta_i: K^\times \ra E^\times$. Following \cite[2.3.2]{bellaichechenevier}, we define the parameter of the triangulation to be the continuous
homomorphism $$\delta:=(\delta_i)_{i=1,\cdots,d}: K^\times \ra (E^\times)^d.$$

When $K=\Qp$, the parameter of a triangular $(\phi,\Gamma_K)$-module refines the data of its Sen
polynomial:

\begin{prop} Let $D$ be a triangular $(\phi,\Gamma)$-module over $E \otimes_{\Qp}\B_{\rig,\Qp}^\dagger$ and $\delta$ the parameter of a triangulation of $D$. Then the Sen polynomial of $D$ is 
$$\prod_{i=1}^d(T-w(\delta_i)).$$
\end{prop}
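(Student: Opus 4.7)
The plan is to pass from $D$ to the associated Sen module (equivalently, to $\D_{\dif}^+(D)$ reduced modulo $t$) and show that the Sen polynomial is multiplicative along the triangulation. Since the filtration $(\Fil_i(D))$ consists of $(\phi,\Gamma)$-submodules that are direct summands of $D$ as $E\otimes_{\Qp}\B_{\rig,\Qp}^\dagger$-modules, applying Berger's $\D_{\dif}^+$ functor (for $(\phi,\Gamma)$-modules over the Robba ring, as in \cite{Ber02}) produces a filtration of $\D_{\dif}^+(D)$ by saturated sub-$\Gamma$-stable $K_\infty[\![t]\!]$-modules whose graded pieces are $\D_{\dif}^+(\mathrm{gr}_i(D))$. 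Reducing modulo $t$ yields an analogous filtration of the Sen module $\D_{\Sen}(D)$ with graded pieces $\D_{\Sen}(\mathrm{gr}_i(D))$.

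Next, I would exploit that the Sen operator $\Theta_{\Sen}=\nabla=\lim_{\gamma\to 1}(\gamma-1)/(\chi_\cycl(\gamma)-1)$ is defined on any $\hat{K_\infty}$-semilinear representation of $\Gamma_K$ carrying a locally $\Qp$-analytic action; in particular it commutes with $\Gamma$-stable submodules and hence preserves the filtration obtained above. Choosing a basis of $\D_{\Sen}(D)$ adapted to this filtration, the matrix of $\Theta_{\Sen}$ is block upper-triangular, with diagonal blocks equal to the Sen operators of $\mathrm{gr}_i(D)$. Therefore the Sen polynomial of $D$, defined as the characteristic polynomial of $\Theta_{\Sen}$ on $\D_{\Sen}(D)$, factors as
\[
\prod_{i=1}^d \det(T - \Theta_{\Sen}\mid \D_{\Sen}(\mathrm{gr}_i(D))).
\]

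The final step is the rank $1$ computation. Each $\mathrm{gr}_i(D)$ is the rank $1$ $(\phi,\Gamma)$-module attached to $\delta_i : \Qp^\times \to E^\times$; an appropriate basis $e_i$ satisfies $g(e_i)=\delta_i(\chi_\cycl(g))\cdot e_i$ for $g\in\Gamma_{\Qp}$. After passing to $\D_{\Sen}$, the image of $e_i$ generates a rank $1$ piece on which
\[
\nabla(e_i) = \lim_{g\to 1}\frac{\delta_i(\chi_\cycl(g))-1}{\log\chi_\cycl(g)}\, e_i = \delta_i'(1)\, e_i = w(\delta_i)\, e_i,
\]
using that $\delta_i$ is locally analytic at $1$. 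Hence the characteristic polynomial of $\Theta_\Sen$ on $\D_{\Sen}(\mathrm{gr}_i(D))$ is $T-w(\delta_i)$, and multiplying over $i$ gives the claim.

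The main technical point, and the step I expect to be the real obstacle, is justifying the exactness of $\D_{\dif}^+$ (and hence of $\D_{\Sen}$) on the short exact sequences $0\to \Fil_{i-1}(D)\to \Fil_i(D)\to \mathrm{gr}_i(D)\to 0$ for non-étale $(\phi,\Gamma)$-modules: the statement that $\Fil_{i-1}(D)$ is a direct summand as an $E\otimes \B_{\rig,\Qp}^\dagger$-module is crucial because $\D_{\dif}^+$ is built by localization at $\iota_n$ and extension of scalars to $K_n[\![t]\!]$, and flatness of these operations preserves direct summands but not general injections. Once this exactness is in hand, the rest is the standard block-triangular characteristic polynomial computation combined with the rank $1$ case.
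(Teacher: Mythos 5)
Your proof is correct, and it matches the paper's intent: the paper itself offers no argument here but simply cites Bellaïche--Chenevier \cite[Prop.\ 2.3.3]{bellaichechenevier}, which runs along exactly the lines you wrote out (exactness of $\D_{\dif}^+/\D_{\Sen}$ on the triangulation, block-triangularity of $\Theta_{\Sen}$, and the rank $1$ weight computation $\Theta_{\Sen}\bar e_i = w(\delta_i)\bar e_i$). You have also correctly identified the technical hinge, the exactness of $\D_{\dif}^+$ on $0\to\Fil_{i-1}(D)\to\Fil_i(D)\to\mathrm{gr}_i(D)\to 0$; this is in fact automatic once one notes that if $\Fil_{i-1}(D)$ is a direct summand of $D$ as $E\otimes\B_{\rig,\Qp}^\dagger$-module then it is a direct summand of $\Fil_i(D)$ (split by restricting the retraction $D\to\Fil_{i-1}(D)$), and direct summands are preserved by the base changes $\iota_n$ and $-\otimes K_n[\![t]\!]$ occurring in Berger's construction.
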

\begin{proof}
See \cite[Prop. 2.3.3]{bellaichechenevier}.
\end{proof}

We now recall the notion of refinements for crystalline trianguline representations of $\G_{\Qp}$ as in \cite[\S 2.4]{bellaichechenevier}. Let $V$ be finite, $d$-dimensional, continuous, $E$-representation of $\G_{\Qp}$. 
We will assume that $V$ is crystalline and that the crystalline Frobenius $\phi$ acting on 
$\D_{\crys}(V)$ has all its eigenvalues in $E^\times$. 

By a refinement of $V$, using the definition of \cite[\S3]{mazur2000theme}, we mean the data of a full $\varphi$-stable
$E$-filtration $\Ref=(\Ref_i)_{i=0,\dots,d}$ of $\D_{\crys}(V)$:
$$\Ref_0=0 \subsetneq \Ref_1 \subsetneq \cdots \subsetneq \Ref_d=\D_{\crys}(V).$$
As in \cite[2.4.1]{bellaichechenevier}, we remark that any refinement $\Ref$ determines two orderings: 
\begin{enumerate}
\item It determines an ordering 
$(\varphi_1,\cdots,\varphi_d)$ of the eigenvalues of $\varphi$, defined by the formula $$\det(T-\varphi_{|\Ref_i})=\prod_{j=1}^i (T-\varphi_j).$$
If all these eigenvalues are distinct then such an ordering conversely determines $\Ref$. 
\item It determines also an ordering $(s_1,\cdots,s_d)$ on the set of Hodge-Tate weights of $V$, defined by the property that the jumps of the weight filtration of $\D_{\crys}(V)$ induced on $\Ref_i$ are $(s_1,\cdots,s_i)$
\end{enumerate}

The theory of refinements has a simple interpretation in terms of 
$(\phi,\Gamma)$-modules: let $D$ be a crystalline $(\phi,\Gamma)$-module as above and
let $\Ref$ be a refinement of $D$. We can construct from $\Ref$ a
filtration $(\Fil_i(D))_{i=0,\cdots,d}$ of $D$ by setting
	$$\Fil_i(D):=(E \otimes_{\Qp}\B_{\rig,\Qp}^\dagger[1/t]\Ref_i)\cap D,$$
which is a finite type saturated $E \otimes_{\Qp}\B_{\rig,{\Qp}}^\dagger$-submodule of $D$.

\begin{prop} \label{ref=tri} The map defined above $(\Ref_i) \mapsto
(\Fil_i(D))$ induces
a bijection between the set of refinements of $D$ and the set of
triangulations of $D$, whose inverse is
$\Ref_i:=\Fil_i(D)[1/t]^{\Gamma}$.
	In the bijection above, for $i=1,\dots, d$, the graded piece
$\Fil_i(D)/\Fil_{i-1}(D)$ is
isomorphic to the $(\phi,\Gamma)$-module on $E \otimes_{\Qp}\B_{\rig,{\Qp}}^\dagger$ attached to $\delta_i$ where $\delta_i(p)=\varphi_i p^{-s_i}$ and 
${\delta_i}_{|\Gamma}=\chi^{-s_i}$, where the $\varphi_i$ and $s_i$ are defined by items $1$ and $2$ above. 
\end{prop}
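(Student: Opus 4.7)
The plan is to use Berger's comparison theorem between crystalline $(\phi,\Gamma)$-modules over $E \otimes_{\Qp}\B_{\rig,\Qp}^\dagger$ and filtered $\phi$-modules over $E$, which yields a canonical identification $D[1/t] \simeq (E \otimes_{\Qp}\B_{\rig,\Qp}^\dagger[1/t]) \otimes_E \D_{\crys}(V)$, where $\D_{\crys}(V) = (D[1/t])^{\Gamma}$ has dimension $d$ over $E$. This identification is compatible with $\phi$ and $\Gamma$ (with $\Gamma$ acting trivially on $\D_{\crys}(V)$).

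First, I would check that the forward map $\cal{F} \mapsto (\Fil_i(D))$ is well-defined. Each $\Ref_i$, viewed inside $\D_{\crys}(V) \subset D[1/t]$, is $\phi$-stable and pointwise $\Gamma$-fixed; hence $(E \otimes_{\Qp}\B_{\rig,\Qp}^\dagger[1/t])\cdot\Ref_i$ is a $(\phi,\Gamma)$-stable sub-$(\phi,\Gamma)$-module of $D[1/t]$, and intersecting with $D$ preserves this stability. It remains to see that $\Fil_i(D)$ is saturated and of rank $i$. By construction $D/\Fil_i(D)$ is torsion-free (any element killed by a power of $t$ belongs to $\Fil_i(D)$ by definition), and a $t$-saturated sub-$(\phi,\Gamma)$-module of a $(\phi,\Gamma)$-module over the Robba ring is a direct summand, by the standard Bézout-type argument; rank $i$ is immediate since $(E \otimes_{\Qp}\B_{\rig,\Qp}^\dagger[1/t])\cdot\Ref_i$ has rank $i$ over the fraction field.

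Next, I would identify the graded pieces. Write $\mathrm{gr}_i(D) = \Fil_i(D)/\Fil_{i-1}(D)$, a rank $1$ $(\phi,\Gamma)$-module over $E \otimes_{\Qp}\B_{\rig,\Qp}^\dagger$. Inverting $t$ and taking $\Gamma$-invariants yields $\D_{\crys}(\mathrm{gr}_i(D)) = \Ref_i/\Ref_{i-1}$, an $E$-line on which $\phi$ acts by $\varphi_i$. The Hodge filtration on this line is induced from that on $\D_{\crys}(V)$, and by definition of the ordering $(s_1,\dots,s_d)$ the jump occurs in degree $s_i$. By the classification of rank $1$ crystalline $(\phi,\Gamma)$-modules (Colmez, \cite{colmez2008representations} Prop. 3.1), such a module corresponds to a character $\delta_i : \Qp^\times \to E^\times$; translating Frobenius eigenvalue $\varphi_i$ and weight $s_i$ through the explicit formula for the rank $1$ case yields $\delta_i(p) = \varphi_i p^{-s_i}$ and $\delta_i|_{\Gamma} = \chi^{-s_i}$.

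For the reverse map, given a triangulation $(\Fil_i(D))$, set $\Ref_i := \Fil_i(D)[1/t]^\Gamma$. Since $\Fil_i(D)$ is a direct summand of $D$, the inclusion $\Fil_i(D)[1/t] \hookrightarrow D[1/t]$ identifies $\Ref_i$ with an $E$-subspace of $\D_{\crys}(V)$, stable under $\phi$. Using that each graded piece $\mathrm{gr}_i(D)$ is crystalline of rank $1$ (attached to some character $\delta_i$, cf. proposition 3.1 of \cite{colmez2008representations}, which applies because the triangulation condition forces the graded pieces to be defined over $E \otimes_{\Qp}\B_{\rig,\Qp}^\dagger$ with a locally trivial $\Gamma$-action after inverting $t$), one sees by induction on $i$ that $\dim_E \Ref_i = i$, so $(\Ref_i)$ is a full $\phi$-stable $E$-flag. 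The two compositions being mutually inverse is then a direct diagram chase: in one direction $\Fil_i(D) \mapsto \Fil_i(D)[1/t]^\Gamma = \Ref_i$ since $\Fil_i(D)[1/t] = (E \otimes_{\Qp}\B_{\rig,\Qp}^\dagger[1/t])\cdot \Ref_i$ and taking $\Gamma$-invariants recovers $\Ref_i$; in the other direction, the module $(E \otimes_{\Qp}\B_{\rig,\Qp}^\dagger[1/t])\cdot\Ref_i \cap D$ coincides with $\Fil_i(D)$ because both are saturated sub-$(\phi,\Gamma)$-modules of $D$ with the same rank and the same generic fiber inside $D[1/t]$.

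The main obstacle in this outline is the saturatedness/direct-summand step, which one has to argue carefully using the structure of finitely generated modules over $E \otimes_{\Qp}\B_{\rig,\Qp}^\dagger$, and, in the reverse direction, the verification that the successive quotients of a triangulation are not merely of rank $1$ but are actually crystalline — this is what ensures that the dimensions of the $\Ref_i$ add up correctly and is where the crystalline hypothesis on $V$ enters crucially (via the fact that an extension of crystalline rank $1$ $(\phi,\Gamma)$-modules remains crystalline when $D$ itself is crystalline, together with dimension count $\dim_E \D_{\crys}(\Fil_i(D)) = i$).
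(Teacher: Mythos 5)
Your outline reconstructs the argument of Bellaïche--Chenevier, Prop.\ 2.4.1, which the paper cites without further proof, so there is no divergence in approach. One local error is worth correcting, though: in the reverse direction you justify crystallinity of the graded pieces $\mathrm{gr}_i(D)$ with the parenthetical claim that ``the triangulation condition forces the graded pieces to be defined over $E\otimes_{\Qp}\B_{\rig,\Qp}^\dagger$ with a locally trivial $\Gamma$-action after inverting $t$.'' That is not so: a rank $1$ $(\phi,\Gamma)$-module over the Robba ring is typically not crystalline, and the triangulation condition by itself gives nothing here. What actually carries the step is the dimension count you name in your final paragraph but should have used here: for any $(\phi,\Gamma)$-module $M$ over $E\otimes_{\Qp}\B_{\rig,\Qp}^\dagger$ one has $\dim_E\, (M[1/t])^{\Gamma}\leq\mathrm{rk}(M)$, and $M\mapsto (M[1/t])^{\Gamma}$ is left exact on saturated inclusions; hence $\dim_E\Ref_i\leq i$ for each $i$, and the equality $\dim_E\Ref_d=d$ forced by crystallinity of $D$ propagates equality and exactness through the whole flag. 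That is what makes each $\mathrm{gr}_i(D)$ crystalline and identifies $\D_{\crys}(\mathrm{gr}_i(D))$ with $\Ref_i/\Ref_{i-1}$. The remaining points --- saturation of $\Fil_i(D)$ via the Bézout property of the Robba ring, the compatibility of Hodge filtrations under passage to a saturated $(\phi,\Gamma)$-submodule, and the translation of $(\varphi_i,s_i)$ into the formula $\delta_i(p)=\varphi_ip^{-s_i}$, $\delta_i|_{\Gamma}=\chi^{-s_i}$ via Colmez's classification of rank $1$ $(\phi,\Gamma)$-modules --- are handled correctly and match the cited proof.
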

\begin{proof}
See \cite[Prop. 2.4.1]{bellaichechenevier}. 
\end{proof}

\begin{rema}
In particular, Proposition \ref{ref=tri} shows that crystalline representations are trianguline, and that the set of their triangulations is in natural bijection with the set of their refinements.
\end{rema}

We now finish this section with a result regarding trianguline representations that we were not able to find in the litterature.

\begin{prop}
\label{prop Qp same as L rep for triang}
Let $V$ be an $L$-representation of $\G_K$. Then $V$ is trianguline if and only if the underlying $\Qp$-representation of $V$ is trianguline.
\end{prop}
\begin{proof}
Let $V$ be an $L$-representation of $\G_K$ and let $E$ be a finite extension of $L$, containing all the images of the embeddings $\tau : L \to \overline{K}$ and such that $E \otimes_L V$ is split trianguline. Then $E \otimes_{\Qp}V = (E \otimes_{\Qp}L) \otimes_L V = \oplus_{\tau \in \Sigma}(E \otimes_L V)_\tau$ where $\Sigma = \Emb(L,\overline{K})$.

In particular, $E \otimes_L V$ is a subrepresentation of $E\otimes_{\Qp}V$ and this concludes the first half of the proof by proposition \ref{prop triang cat stable by}. For the other direction, let $W = W_e(E \otimes_L V)$ the corresponding $\B_{e,E}$-representation and let $W_0 = 0 \subset W_1 \subset \ldots W_d = W$ a triangulation of $W$. For $\tau \in \Sigma$, let $\B_{e,E,\tau} = E \otimes_{L,\tau}\B_{e,E}$. For $\tau \in \Sigma$ and $1 \leq i \leq d$, let $W_{i,\tau} = \B_{e,E,\tau} \otimes_{\B_{e,E}}W_{i}$. By construction 
$$0 \subset W_{1,\tau} \subset \ldots \subset W_{d,\tau}$$ 
is a triangulation of $W((E \otimes_L V)_\tau)$ and thus $E \otimes_{\Qp}V$ is trianguline.
\end{proof}

\subsection{Discussion on a ring of periods for trianguline representations}
By proposition \ref{prop triang cat stable by}, we know that the category of (split) trianguline representations of $\G_K$ is a Tannakian category. Because of this and because of proposition \ref{prop Qp same as L rep for triang}, it appears reasonable to look for a ring $\B$ such that trianguline representations are exactly the representations which are $\B$-admissible in the sense of Fontaine.

Recall that the notion of admissibility in the sense of Fontaine is defined for what he called regular rings and is as follows (we only recall the definitions of \cite{fontaine1994representations} in the particular case of $\Qp$-representations because that's all we need here). 

Let $\B$ be a topological $\Qp$-algebra endowed with an action of a group $G$. For any $\Qp$-representation of $G$, we let $\D_{\B}(V):=(\B\otimes_{\Qp}V)^G$. We let $\alpha_{\B}(V)$ denote the $\B$-linear map $\B \otimes_{\B^G}\D_{\B}(V) \longrightarrow \B \otimes_{\Qp}V$ deduced from the inclusion $\D_{\B}(V) \subset \B\otimes_{\Qp}V$ by extending the scalars to $\B$. The ring $\B$ is said to be $G$-regular if the following hold:
\begin{enumerate}
\item $\B$ is reduced;
\item for any $p$-adic representation $V$ of $G$, the map $\alpha_V$ is injective;
\item any element $b$ of $\B$ which is nonzero and is such that the $\Qp$-line generated by $\B$ is $G$-stable is invertible.
\end{enumerate}
The last condition implies in particular that $\B^G$ is a field. If $\B$ is $G$-regular, a representation $V$ of $G$ is said to be $\B$-admissible if $\alpha_{\B}(V)$ is an isomorphism, which is equivalent as saying that $\dim_{\B^G}\D_{\B}(V)=\dim_{\Qp}V$. 

Unfortunately, it seems to us that in the case we consider, the last condition is too strong and thus we extend the notion of $G$-regularity as follows: we say that $\B$ is $G$-regular if the following conditions are met:
\begin{enumerate}
\item $\B$ is reduced;
\item for any $p$-adic representation $V$ of $G$, $\D_{\B}(V)$ is a free $\B^G$-module;
\item the map $\alpha_V$ is injective.
\end{enumerate}

It is clear that $G$-regular rings in the sense of Fontaine are $G$-regular for us, but that the converse does not hold. 

In the rest of the paper, $G$-regularity and admissibility are to be understood in our sense.

We now explain exactly what we mean by a ring of trianguline periods. 

\begin{defi}
A $\G_K$-regular ring $\B$ is said to be a trianguline periods ring for $\G_K$ if trianguline representations of $\G_K$ are $\B$-admissible, and if $\B$-admissible representations of $\G_K$ are trianguline.
\end{defi}

\begin{prop}
Let $\B$ be a $\G_K$-regular ring and let $V$ be an $L$-representation of $\G_K$. Then $V$ is $\B$-admissible if and only if there exists a finite extension $E$ of $L$ such that $V \otimes_L E$ is $\B$-admissible.
\end{prop}
\begin{proof}
It's clear that if $V$ is $\B$-admissible, then there exists a finite extension $E$ of $L$ such that $V \otimes_L E$ is $\B$-admissible. To show the reverse, first note that the admissibility of an $E$ representation $V$ does not depend on wether one considers it as a $\Qp$-representation or as an $E$-representation (the $\B^{\G_K}$-module $\D_{\B}(V) = (V \otimes_{\Qp}\B)^{\G_K}$ is always the same). Now because the category of $\B$-admissible representations is clearly stable by subobjects, it suffices to note that $V$ is a sub-$\Qp$-representation of $V \otimes_L E$.
\end{proof}

Unlike in the crystalline or semistable case, if such a ring exists, it has to depend on $K$:

\begin{prop}
\label{B depend de K}
There is no ring $\B$ satisfying the properties above such that, for any finite extension $K$ of $\Qp$, $\B$ is a trianguline periods ring for $\G_K$.
\end{prop}

Proposition \ref{B depend de K} is a consequence of the following result:

\begin{prop}
\label{prop construct pot triang not triang}
Let $L/K$ be any finite extension. Then there exists a representation $V$ of $\G_K$ such that $V$ is trianguline as a representation of $\G_L$ but is not trianguline as a representation of $\G_K$.
\end{prop}
\begin{proof}
Let $\eta : \G_L \to L^\times$ be a character such that there exists $\tau_1 \neq \tau_2 \in \Emb(L,\Qpbar)$ with $(\tau_1)_{|K}=(\tau_2)_{|K}$, and such that $\eta$ is $\tau_1$-de Rham but not $\tau_2$-de Rham in the sense of \cite{Dingpartially}. Our claim is that such a character can't possibly extend to $\G_K$ and neither can any of its conjugate, i.e. there is no character $\rho : \G_K \to L^\times$ such that $\rho_{|\G_L}=\sigma(\eta)$ for some $\sigma \in \Emb(L,\Qpbar)$. Indeed, if such a $\rho$ existed, then the dimension of $\D_{\dR,\sigma}(\sigma(\eta))$ would only depend of the dimension of $\D_{\dR,\sigma_{|K}}(\rho)$, which is not the case because of the assumption on $\tau_1$ and $\tau_2$. 

We now let $V=\mathrm{ind}_{\G_L}^{\G_K}\eta$. This is a $p$-adic representation of $\G_K$, whose restriction to $\G_L$ is the sum of the conjugates of $\eta$, so that it clearly is trianguline as a representation of $\G_L$. Let us assume that it also is trianguline as a representation of $\G_K$. Let $W$ be the $\B_{|K}^{\otimes L}$-pair attached to $V$. As a $\B_{|L}^{\otimes L}$-pair, we can write 
$$W=\oplus_{\sigma}W(\sigma(\eta)).$$
Since we assumed that $W$ is trianguline as a $\B_{|K}^{\otimes L}$-pair, there exists $W_1 \subset W$ a direct summand of rank $1$. For $\tau \in \Emb(L,\Qpbar)$, we have the following exact sequence
$$0 \ra \oplus_{\tau \neq \sigma}W(\sigma(\eta)) \ra W \ra W(\tau(\eta)) \ra 0$$
so that, since $W_1$ is a direct summand of rank $1$ of $W$ and by proposition 2.4 of \cite{BMTensTriang}, we either have $W_1 = W(\tau(\eta))$ or $W_1 \subset \oplus_{\tau \neq \sigma}W(\sigma(\eta))$. By induction, $W_1$ has to be equal to one of the $W(\tau(\eta))$. Therefore, one of the conjugates of $\eta$ has to extend to $\G_K$, which we have proven is not possible.  
\end{proof} 

We can now give a proof of proposition \ref{B depend de K}:

\begin{proof}
By the results of \S 5, the periods of any representation live in $\cal{C}^{\la}(\Gamma_K,\Bt^I)_1$, for $I$ any compact subinterval of $[r_0;+\infty[$, and in particular so do the periods of any trianguline representation $V$ of $\G_K$, so that we can assume that $\B \subset \cal{C}^{\la}(\Gamma_K,\Bt^I)_1$. Since every unramified representation of $\G_K$ is trianguline, we can assume that $\B \supset \hat{\Qp^{unr}}$. Moreover, if $L/K/\Qp$ are unramified then it is easy to see that $(\cal{C}^{\la}(\Gamma_K,\Bt^I)_1)^{\G_L} = L \otimes_K (\cal{C}^{\la}(\Gamma_K,\Bt^I)_1)^{\G_K}$, and thus we can assume that $\B^{\G_L} = L \otimes_K \B^{\G_K}$.

Assume that $\B$ is a ring satisfying the properties, and such that for any finite extension $K/\Qp$, $\B$ is a trianguline periods ring for $\G_K$. Let $K$ be a finite unramified extension of $\Qp$, let $L$ be a finite unramified extension of $K$. Let $V$ be a $p$-adic representation of $\G_K$ which is not trianguline as a representation of $\G_K$ but becomes trianguline over $\G_L$, which exists by the previous proposition. Let $\D_L = (\B \otimes V)^{\G_L}$. It is a $\B^{\G_L}$-module, endowed with a semilinear action of $\Gal(L/K)$. Let $\D = \D_L^{\Gal(L/K)}$. By Speiser's lemma, $\D_L \simeq L \otimes_{K}\D$ and thus $\D_L = \B^{\G_L}\otimes_{\B^{\G_K}}\D$. Thus, $V$ is $\B$-admissible as a representation of $\G_K$.
\end{proof}

\subsection{$F$-analytic $\B_{\tri,K}^{\an}$-admissible representations}
Since we are now thinking of rings of periods for trianguline representations, we let $\B_{\tri,K}^{\an}=\cal{C}^{\la}(\Gamma_K,\Btrigplus)_1$. We now explain why $\B_{\tri,K}^{\an}$ is a good starting candidate as a ring of trianguline periods. Note that by proposition \ref{prop Bcrochu computes D box 0} and remark \ref{remark colmez semistable nonadmiss} we already know that there are $F$-analytic trianguline representations of $\G_K$ which are not $\B_{\tri,K}^{\an}$-admissible. To put some emphasis on the point of view of rings of periods, we write $\D_{\tri,K}^{\an}(V)$ for $(\D_{\rig}^{\dagger}(V))^{\la}$ and $\D_{\tri,K}^{n,\an}(V)$ for $(\D_{\rig}^{\dagger}(V))^{\Gamma_n-\an} = (\cal{C}^{\an}(\Gamma_n,\Btrigplus) \otimes_{\Qp}V)^{\G_{K_n}}$.

\begin{prop}
The ring $\B_{\tri,K}^{\an}$ is $\G_K$-regular for $F$-analytic representations.
\end{prop}
\begin{proof}
By proposition \ref{prop Btn equiv Btrigdaggercrochu}, it suffices to prove that the ring $\cal{C}^{\la}(\Gamma_K,\Bt_{\rig}^\dagger)_1$ is $\G_K$-regular for $F$-analytic representations. But now this follows from proposition \ref{prop Bcrochu computes D box 0} and in the cyclotomic case from lemma 3.19 and theorem 3.20 of \cite{colmez2014serie}. The proof of those results extend to the Lubin-Tate case verbatim. 
\end{proof}

We now define a notion of refinements for $F$-analytic representations of $\G_{K}$ which are $\B_{\tri,K}^{\an}$-admissible. We let $V$ be a $\B_{\tri,K}^{\an}$-admissible $L$-representation of dimension $d$ of $\G_{K}$. By a refinement of $V$, we mean the data of a full $\phi_q$- and $\Gamma_K$-stable $L\langle \langle t_\pi \rangle \rangle$-filtration $\Ref=(\Ref_i)_{i=0,\dots,d}$ of $(\D_{\rig}^\dagger(V))^{\la}$:
$$\Ref_0=0 \subsetneq \Ref_1 \subsetneq \cdots \subsetneq \Ref_d=(\D_{\rig}^{\dagger}(V))^{\la}.$$

Note that, as in the crystalline case studied in \cite{bellaichechenevier}, the theory of refinements has a simple interpretation in terms of 
$(\phi_q,\Gamma_K)$-modules: let $\D_{\rig}^\dagger(V)$ be the triangulable $(\phi_q,\Gamma_K)$-module over $L \otimes_{K}\B_{\rig,K}^\dagger$ attached to $V$ and let $\Ref$ be a refinement of $\D_{\rig}^{\dagger}(V)\boxtimes \{0\}$. We can construct from $\Ref$ a
filtration $(\Fil_i(D))_{i=0,\cdots,d}$ of $\D_{\rig}^\dagger(V)$ by setting
	$$\Fil_i(\D_{\rig}^\dagger(V)):=((L \otimes_{K}\B_{\rig,K}^\dagger)\otimes_{L\langle \langle t_\pi \rangle \rangle}\Ref_i)$$
which is a finite type saturated $E \otimes_{K}\B_{\rig,K}^\dagger$-submodule of $\D_{\rig}^\dagger(V)$.

\begin{prop} \label{gen refinements = trianguline} The map defined above $(\Ref_i) \mapsto
(\Fil_i(\D_{\rig}^\dagger(V)))$ induces
a bijection between the set of refinements of $V$ and the set of
triangulations of $\D_{\rig}^\dagger(V)$, whose inverse is
$\Ref_i:=\left((\Fil_i(\D_{\rig}^{\dagger}(V)))^{\la}\right)$.
\end{prop}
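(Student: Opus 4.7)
The plan is to verify that both maps are well-defined and mutually inverse, thereby establishing a rank-preserving bijection between saturated sub-$(\phi_q,\nabla_u)$-modules of $\D_{\tri,K}^{\an}(V)$ over $E\brax$ and saturated sub-$(\phi_q,\Gamma_K)$-modules of $\D_{\rig}^\dagger(V)$ over $E \otimes_K \B_{\rig,K}^\dagger$. Once this is done, the filtration conditions --- strict inclusions, full length $d$, and rank $1$ successive quotients --- transport automatically by rank considerations, so that refinements of $V$ and triangulations of $\D_{\rig}^\dagger(V)$ are in bijection.

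For the forward direction, given a refinement $(\Ref_i)$ of $\D_{\tri,K}^{\an}(V)$, I would first show that $\Fil_i := ((E \otimes_K \B_{\rig,K}^\dagger) \otimes_{E\brax} \Ref_i) \cap \D_{\rig}^\dagger(V)$ is a saturated sub-$(\phi_q,\Gamma_K)$-module of rank $i$. The $\phi_q$-stability is immediate. For $\Gamma_K$-stability, I would use the correspondence recalled just before Theorem \ref{theo structure phi modules on Ebrax}: since $\Ref_i$ is $\nabla_u$-stable, the operator $e^{\pi^n \nabla_u}$ converges on $\Ref_i$ for $n$ large enough, producing a $K$-analytic action of $\Gamma_n$ on $\Ref_i$ that is compatible, through the embedding $\Ref_i \hookrightarrow \Btrig^{\dagger}\crochuK \otimes_E V$, with the $\Gamma_K$-action on $\D_{\rig}^\dagger(V)$. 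Hence $\Fil_i$ is $\Gamma_n$-stable and therefore $\Gamma_K$-stable by density and continuity. The saturation is built into the definition through the intersection with $\D_{\rig}^\dagger(V)$.

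For the backward direction, given a triangulation $(\Fil_i)$ of $\D_{\rig}^\dagger(V)$, each $\Fil_i$ is itself an $F$-analytic $(\phi_q,\Gamma_K)$-module of rank $i$ over $E \otimes_K \B_{\rig,K}^\dagger$. Running the arguments of Propositions \ref{Dtri=Drignabla0} and \ref{prop calDnI free of ft} with $\Fil_i$ in place of $\D_{\rig}^\dagger(V)$ gives that $\Ref_i := \bigcup_{n \geq 0} (\Fil_i \crochuK_n)^{\Gamma_{K_n}}$ is a free $E\brax$-module of rank $i$ equipped with commuting $\phi_q$ and $\nabla_u$. Saturation of $\Ref_i$ inside $\D_{\tri,K}^{\an}(V)$ follows from the left-exactness of $\Gamma_{K_n}$-invariants combined with the saturation of $\Fil_i$ in $\D_{\rig}^\dagger(V)$.

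For the mutual inversion, one direction follows from Proposition \ref{Dtri=Drignabla0} applied to $\Fil_i$: starting from $\Ref_i$, passing forward to $\Fil_i$ and then backward recovers $\Ref_i$. Conversely, starting from $\Fil_i$ with backward image $\Ref_i$, the inclusion $\Fil_i \subseteq ((E \otimes_K \B_{\rig,K}^\dagger) \otimes_{E\brax} \Ref_i) \cap \D_{\rig}^\dagger(V)$ is clear since $\Ref_i \subseteq \Fil_i \crochuK$, and equality follows from the equality of ranks and the saturation of $\Fil_i$. The main obstacle I anticipate is the rank control in the forward direction, namely that $\Fil_i$ has rank exactly $i$ and not less. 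I expect this will require an argument analogous to the last item of Proposition \ref{prop phi gamma from locana berger}, exploiting Frobenius regularity (Proposition \ref{prop frobreg crochu}) to descend from the ``$\crochuK$-level'' to the Robba ring level.
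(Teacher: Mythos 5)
The paper's own proof of this proposition is a single line --- ``This is exactly as in the crystalline case'' --- deferring entirely to the Bellaïche--Chenevier argument \cite[Prop. 2.4.1]{bellaichechenevier}. Your proposal is the right move: it is an attempt to transcribe that argument into the present setting, which is what the terse reference demands. The overall architecture (well-definedness of both maps, mutual inversion, rank bookkeeping) is correct, and your use of the $\nabla_u$-to-$\Gamma_n$ dictionary for $\Gamma_K$-stability in the forward direction is the right observation.

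However, there is a genuine gap in the rank argument, and it is not quite where you locate it. You claim that ``running the arguments of Propositions \ref{Dtri=Drignabla0} and \ref{prop calDnI free of ft} with $\Fil_i$ in place of $\D_{\rig}^\dagger(V)$ gives that $\Ref_i$ is a free $E\brax$-module of rank $i$.'' But Proposition \ref{prop calDnI free of ft} only yields freeness and finiteness; it says nothing about the rank being full. The heart of the Bellaïche--Chenevier proof --- and what the phrase ``exactly as in the crystalline case'' is really pointing at --- is the fact that saturated $(\phi,\Gamma)$-submodules of a crystalline $(\phi,\Gamma)$-module are themselves crystalline (the invariants functor preserves rank on sub-objects). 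The analogue you need is that saturated sub-$(\phi_q,\Gamma_K)$-modules $\Fil_i \subset \D_{\rig}^\dagger(V)$ remain ``$\B_{\tri,K}^{\an}$-admissible,'' i.e.\ $\Ref_i = \bigcup_n (\Fil_i\crochuK_n)^{\Gamma_{K_n}}$ has rank exactly $i$. This follows from left-exactness of the invariants functor together with the hypothesis that $V$ is $\B_{\tri,K}^{\an}$-admissible: one always has $\mathrm{rank}(\Ref_i) \le \mathrm{rank}(\Ref_{i-1}) + \mathrm{rank}\bigl(\D_{\tri}(\Fil_i/\Fil_{i-1})\bigr) \le \mathrm{rank}(\Ref_{i-1}) + 1$, and admissibility forces $\mathrm{rank}(\Ref_d) = d$, so all inequalities are equalities. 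You never invoke the admissibility of $V$ in your argument, yet it is exactly what closes this squeeze. Conversely, once the backward direction preserves ranks, the forward direction --- which worries you --- is comparatively easy: the evaluation map embeds $\Ref_i$ as a rank-$i$ $E\brax$-submodule of $\D_{\rig}^\dagger(V)$ (injectivity follows from $\alpha_V$ being injective, Corollary \ref{coro Dtri free dagger}, not from Frobenius regularity), and saturating over $\B_{\rig,K}^\dagger$ does not drop the rank. Frobenius regularity (Proposition \ref{prop frobreg crochu}) is not the right tool for the rank count; it is about shrinking coefficient rings, not about controlling dimensions of invariant subspaces.
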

\begin{proof}
This is exactly as in the crystalline case.
\end{proof}

\begin{prop}
\label{exists refinement}
Let $M$ be a $(\phi_q,\Gamma_K)$-module of rank $d$ on $L\langle \langle t_\pi \rangle \rangle$. Then, up to extending the scalars to some finite extension $E$ of $L$, there exists a filtration 
$$M_0 = 0 \subsetneq M_1 \subsetneq \ldots \subsetneq M_d = M$$
of $M$ by saturated sub-$(\phi_q,\Gamma_K)$-modules.
\end{prop}
\begin{proof}
We prove the result by induction. If $d=1$ there is nothing to prove. Assume now that $d \geq 2$ and that the result holds for $d-1$.

By theorem \ref{theo structure phi modules on Ebrax} and lemma \ref{lemm descent M to M/vEbrax}, upto replacing $L$ by a finite extension $E'$ of $L$, there exists $e_1$ proper for the action of $\phi$ and $\Gamma_K$ such that $E'\langle \langle t_\pi \rangle \rangle \cdot e_1$ is saturated in $M$. By induction, $M/(E'\langle \langle t_\pi \rangle \rangle\cdot e_1)$ admits a full $(\phi_q,\Gamma_K)$-stable filtration $(\Ref_i)_{i=1}^{d-1}$. We let $M_{i+1}$ be a lift of $\Ref_i$ containing $E'\langle \langle t_\pi \rangle \rangle\cdot e_1$ and we put $M_1 = E'\langle \langle t_\pi \rangle \rangle\cdot e_1$. We then have that $(M_i)_{i=1}^d$ is a full $(\phi_q,\Gamma_K)$-stable filtration of $M$. 
\end{proof}

\begin{theo}
\label{theo Btri-adm is triang}
Let $V$ be a $\B_{\tri,K}^{\an}$-admissible $F$-analytic $p$-adic representation $V$. Then $V$ is trianguline. 
\end{theo}
\begin{proof}
By proposition \ref{exists refinement} and proposition \ref{gen refinements = trianguline}, there exists a finite extension $L$ of $K$ such that $\D_{\rig}^\dagger(V \otimes_{K}L)$ is a triangulable $(\phi_q,\Gamma_K)$-module over $L \otimes_{K}\B_{\rig,K}^\dagger$. 

Moreover, we see from lemma \ref{lemm delta proper} that the characters appearing in the triangulation are $F$-analytic.
\end{proof}

\begin{lemm}
Let $V$ be an $F$-analytic $E$-representation of $\G_{K}$ such that the attached $(\phi_q,\Gamma_K)$-module $\D_{\rig}^\dagger(V)$ is triangulable, and let $\delta : (K^\times)^d \ra (E^\times)^d$ be the parameter of a triangulation of $\D_{\rig}^\dagger(V)$. Then in an adapted basis for the refinement of $\D_{\tri,K}^{\an}(V)$ corresponding to $\delta$ by proposition \ref{gen refinements = trianguline}, the matrices of $\nabla$ and $\phi_q$ are respectively of the form:
\[
\begin{pmatrix}
w(\delta_1) & * & \cdots & * \\
0 & w(\delta_2) & \cdots & * \\
\vdots  & \vdots  & \ddots & \vdots  \\
0 & 0 & \cdots & w(\delta_d)
\end{pmatrix}
\quad \textrm{and }
\begin{pmatrix}
\delta_1(\pi) & * & \cdots & * \\
0 & \delta_2(\pi) & \cdots & * \\
\vdots  & \vdots  & \ddots & \vdots  \\
0 & 0 & \cdots & \delta_d(\pi)
\end{pmatrix}.
\]
\end{lemm}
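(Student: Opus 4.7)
The plan is to reduce the statement to a rank-one computation on each graded piece of the refinement via the bijection of Proposition \ref{gen refinements = trianguline}, and then to produce explicit invariants that yield the claimed diagonal entries. Concretely, I would first fix an $E\brax$-basis $(e_1, \ldots, e_d)$ of $\D_{\tri,K}^{\an}(V)$ adapted to the refinement, meaning that $(e_1, \ldots, e_i)$ is an $E\brax$-basis of $\Ref_i$ for every $i$. Since a refinement is by definition a full filtration by $\phi_q$- and $\nabla_u$-stable submodules whose graded pieces $\Ref_i/\Ref_{i-1}$ are rank one over $E\brax$, the matrices of $\phi_q$ and $\nabla_u$ in such a basis are automatically upper triangular, with $i$-th diagonal entry equal to the scalar by which the corresponding operator acts on $\Ref_i/\Ref_{i-1}$. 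The problem therefore reduces to identifying these scalars.

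Applying the inverse of the bijection of Proposition \ref{gen refinements = trianguline} to the graded pieces identifies $\Ref_i/\Ref_{i-1}$ with the analogue of $\D_{\tri,K}^{\an}$ for the rank-one $F$-analytic $(\phi_q, \Gamma_K)$-module attached to $\delta_i$. Lemma \ref{lemm rank 1 phigammaLT} then provides a basis $e_i'$ of this rank-one module over $E \otimes_K \B_{\rig,K}^\dagger$ satisfying $\phi_q(e_i') = \delta_i(\pi) e_i'$ and $g(e_i') = \delta_i(\chi_\pi(g)) e_i'$. Since $\delta_i$ is $F$-analytic of weight $w(\delta_i)$, for $g$ close enough to $1$ one has $\delta_i(\chi_\pi(g)) = \exp(w(\delta_i) \log \chi_\pi(g))$; combining this with $g(u_K) = u_K + \log\chi_\pi(g)$ from Definition \ref{defi BcrochuK}, a direct check shows that the element
\[
z_i := \exp(-w(\delta_i) \cdot u_K) \cdot e_i'
\]
converges in $\D_{\rig}^\dagger(V)\crochuK_n$ for $n$ large and is invariant under $\Gamma_{K_n}$. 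Because $(E\brax)^\times = E^\times$, this $z_i$ is, up to a nonzero scalar in $E$, a generator of $\Ref_i/\Ref_{i-1}$ as an $E\brax$-module.

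The computation of the diagonal entries is then immediate: using $\phi_q(u_K) = u_K$ (which follows from $\phi_q(x_K) = \pi x_K$ together with $x_K = e^{-u_K} t_\pi$ and $\phi_q(t_\pi) = \pi t_\pi$) and $\nabla_u = -d/du_K$, one finds
\[
\phi_q(z_i) = \delta_i(\pi)\, z_i \quad\mbox{and}\quad \nabla_u(z_i) = w(\delta_i)\, z_i,
\]
which produces the diagonal entries predicted by the lemma.

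The main technical point, rather than a serious obstacle, is the identification in the second paragraph of $\Ref_i/\Ref_{i-1}$ with the $\D_{\tri,K}^{\an}$ of the rank-one module attached to $\delta_i$: the functor $\bigcup_n (\,\cdot\,\crochuK_n)^{\Gamma_{K_n}}$ need not be exact on the right, but the explicit element $z_i$ constructed above yields a nonzero morphism from $\Ref_i/\Ref_{i-1}$ into this rank-one $\D_{\tri,K}^{\an}$, and a rank count (both sides being rank one over $E\brax$) forces it to be an isomorphism.
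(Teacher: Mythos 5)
Your proof is correct and follows essentially the same route as the paper's: reduce to a rank-one computation on each graded piece of the refinement via Proposition \ref{gen refinements = trianguline} and Lemma \ref{lemm rank 1 phigammaLT}, produce the invariant generator $e^{-w(\delta_i)u_K}e_i'$, and read off the diagonal entries from $\phi_q(u_K)=u_K$ and $\nabla_u=-d/du_K$. The paper packages this as an induction on $d$ that quietly invokes stability of $\D_{\tri,K}^{\an}$ under saturated subobjects and quotients, while you work directly on the graded pieces, explicitly identify the twisting exponent as $-w(\delta_i)$ (the paper leaves $\alpha$ unnamed), and flag and close the right-exactness issue with a rank count --- the same method, stated with a bit more care.
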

\begin{proof}
We prove it by induction on $d$. For $d=1$, by proposition 3.1 of \cite{colmez2008representations} there exists a basis $e_\delta$ of $\D_{\rig}^\dagger(V)$ in which $g(e_\delta) = \delta(\chi_{\pi}(g))e_\delta$ and $\phi_q(e_\delta)=\delta(\pi)e_\delta$, and the action if $F$-analaytic, so that $e_\delta$ is a basis of $\D_{\tri,K}^{n,\an}(V)$ which satisfies the result of the lemma. To see that it is unique note that since $(L\brax)^\times = L^\times$, the matrices of $\nabla$ and $\phi_q$ in an other basis of $\D_{\tri,K}^{n,\an}(V)$ would be the same.

Assume now that $d \geq 2$ is such that the result holds for $d-1$ and let $(\Fil_i(\D_{\rig}^\dagger(V)))_{i=0,\ldots,d}$ be the filtration of $\D_{\rig}^\dagger(V)$ corresponding to the triangulation. Since our constructions are stable by saturated sub-objects, we get by induction that in an adapted basis for the refinement of $\D_{\tri,K}^{n,\an}(\Fil_{d-1}(\D_{\rig}^\dagger(V)))$, the matrices of $\nabla_u$ and $\phi$ are respectively of the form:
\[
\begin{pmatrix}
w(\delta_1) & * & \cdots & * \\
0 & w(\delta_2) & \cdots & * \\
\vdots  & \vdots  & \ddots & \vdots  \\
0 & 0 & \cdots & w(\delta_{d-1})
\end{pmatrix}
\quad \textrm{and }
\begin{pmatrix}
\delta_1(\pi) & * & \cdots & * \\
0 & \delta_2(\pi) & \cdots & * \\
\vdots  & \vdots  & \ddots & \vdots  \\
0 & 0 & \cdots & \delta_{d-1}(\pi)
\end{pmatrix}.
\]
Since our constructions are also stable by quotients by saturated sub-objects and using the proof in the rank $1$ case, we know that the matrices of $\nabla$ and $\phi_q$ in a basis of $\D_{\rig}^\dagger(V)/\Fil_{d-1}(\D_{\rig}^\dagger(V)) \simeq E\otimes_{K}\B_{\rig,K}^\dagger(\delta_d)$ are respectively of the form $(w(\delta_d))$ and $(\delta_d(\pi))$. Therefore, in an adapted basis for the refinement of $\D_{\tri,K}^{\an}(V)$ corresponding to $\delta$, the matrices of $\phi_q$ and $\nabla_u$ are as we wanted.
\end{proof}

In particular, as in the crystalline case, a refinement defines an ordering on both the eigenvalues of $\phi_q$ and on the set of Hodge-Tate weights of $V$, and encodes the data of the Hodge-Tate weights of its parameter.  

Finally, we just remark that given the construction of our rings of periods, it is quite obvious that the modules $\D_{\tri,K}^{n,\an}(V)$ attached to $F$-analytic $p$-adic representations of $\G_K$ contain its crystalline periods:

\begin{prop}
Let $V$ be an $F$-analytic $p$-adic representation. Then 
\begin{enumerate}
\item $\D_{\crys}^+(V) \subset \D_{\tri,K}^{\an}(V)^{\nabla=0}$;
\item $\D_{\crys}(V) \subset (\D_{\tri,K}^{\an}(V)[1/t_\pi])^{\nabla=0}$.
\end{enumerate}
\end{prop}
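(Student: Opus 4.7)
The plan is to exploit the natural inclusion $\iota_n : \Btrigplus \hookrightarrow \Btrigplus\crochuK_n$ that sends $b$ to itself viewed as a constant power series in $u_K$. By the formula defining the $\G_{K_n}$-action in Definition \ref{defi BcrochuK}, this inclusion is $\G_{K_n}$-equivariant (since the action on a constant element is just the action on $\Btrigplus$), and by Definition \ref{defi nabla_u} its image is contained in $\ker \nabla_u$ (since constants in $u_K$ are killed by $-d/du_K$). These two properties together with $\G_{K_n} \subset \G_K$ reduce both parts of the proposition to formal bookkeeping.

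For (1), tensor $\iota_n$ with $V$ over $\Qp$, take $\G_K$-invariants on the source and $\G_{K_n}$-invariants on the target:
$$\D_{\crys}^+(V) = (\Btrigplus \otimes_{\Qp} V)^{\G_K} \subset (\Btrigplus \otimes_{\Qp} V)^{\G_{K_n}} \hookrightarrow (\Btrigplus\crochuK_n \otimes_{\Qp} V)^{\G_{K_n}} = \D_{\tri,K}^{n,\an}(V),$$
and the image sits in the $\nabla_u$-invariants since $\iota_n$ lands in $(\Btrigplus\crochuK_n)^{\nabla_u = 0}$.

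For (2), the same strategy applies provided I can extend $\iota_n$ to a $\G_{K_n}$-equivariant map $\Btrigplus[1/t_\pi] \hookrightarrow \Btrigplus\crochuK_n[1/x_K]$ still landing in $\ker \nabla_u$. Recall the identity $x_K = t_\pi e^{-u_K}$ inside $\Btrigplus\crochuK$; for $n$ large enough that $e^{\pm u_K} = \sum_{k \geq 0}(\pm u_K)^k/k!$ converges in $\Btrigplus\crochuK_n$, the factor $e^{u_K}$ is a unit, so $t_\pi = x_K e^{u_K}$ becomes invertible after inverting $x_K$. The main (minor) verification is that the extended map still lands in $\ker \nabla_u$: since $\nabla_u$ is a derivation, it suffices to check $\nabla_u(t_\pi) = 0$, which is immediate in $\Btrigplus$ (it is constant in $u_K$), and it is instructive to verify consistency via $t_\pi = x_K e^{u_K}$: using $\nabla_u(x_K) = x_K$ and $\nabla_u(e^{u_K}) = -e^{u_K}$, the Leibniz rule gives $\nabla_u(t_\pi) = x_K e^{u_K} - x_K e^{u_K} = 0$. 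Part (2) then follows by the same invariant-taking procedure as in (1).

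The only mild obstacle is the convergence of $e^{\pm u_K}$ in $\Btrigplus\crochuK_n$, which forces $n$ to be sufficiently large (explicitly, $n > e/(p-1)$ where $e$ is the absolute ramification index of $K$); this could be bypassed by working throughout with the union $\Btrigplus\crochuK = \bigcup_{n \geq 0} \Btrigplus\crochuK_n$ instead of fixing a single $n$.
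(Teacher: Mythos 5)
Your proposal is correct and follows essentially the same route as the paper's proof. The paper phrases the argument by identifying $(\B_{\tri,K}^{n,\an})^{\nabla_u=0}=\Btrigplus$ and $(\B_{\tri,K}^{n,\an}[1/x_K])^{\nabla_u=0}=\Btrigplus[1/t_\pi]$ and then taking $\G_{K_n}$-invariants (using that $\nabla_u$ commutes with the Galois action), whereas you exhibit the $\G_{K_n}$-equivariant, $\nabla_u$-killing inclusion $\Btrigplus\hookrightarrow\Btrigplus\crochuK_n$ and its extension after inverting $t_\pi$ via $t_\pi=x_Ke^{u_K}$; these are two phrasings of the same observation, and your consistency check $\nabla_u(t_\pi)=\nabla_u(x_K)e^{u_K}+x_K\nabla_u(e^{u_K})=x_Ke^{u_K}-x_Ke^{u_K}=0$ is exactly the computation that underlies the paper's second identification.
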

\begin{proof}
This just follows from the fact that $\D_{\crys}^+(V)=(\Bt_{\rig}^+ \otimes_{\Qp}V)^{\G_K}$ and that $\D_{\crys}(V)=(\Bt_{\rig}^+[1/t_\pi] \otimes_{\Qp}V)^{\G_K}$ by lemma 3.8 of \cite{Porat}.
\end{proof}

\bibliographystyle{amsalpha}
\bibliography{bibli}

\providecommand{\bysame}{\leavevmode\hbox to3em{\hrulefill}\thinspace}
\providecommand{\MR}{\relax\ifhmode\unskip\space\fi MR }
\providecommand{\MRhref}[2]{%
  \href{http://www.ams.org/mathscinet-getitem?mr=#1}{#2}
}
\providecommand{\href}[2]{#2}
\begin{thebibliography}{RJRC22}

\bibitem[BC09a]{bellaichechenevier}
Jo{\"e}l Bella{\"i}che and Ga{\"e}tan Chenevier, \emph{Families of {G}alois
  representations and {S}elmer groups}, Ast{\'e}risque \textbf{324} (2009),
  1--314.

\bibitem[BC09b]{BrinonConrad}
Olivier Brinon and Brian Conrad, \emph{{CMI} summer school notes on $p$-adic
  hodge theory}.

\bibitem[BC10]{BerChe10}
Laurent Berger and Ga{\"e}tan Chenevier, \emph{Repr{\'e}sentations
  potentiellement triangulines de dimension $2$}, Journal de th{\'e}orie des
  nombres de Bordeaux \textbf{22} (2010), no.~3, 557--574.

\bibitem[BC16]{Ber14SenLa}
Laurent Berger and Pierre Colmez, \emph{Th\'eorie de {S}en et vecteurs
  localement analytiques}, Ann. Sci. \'Ec. Norm. Sup\'er. (4) \textbf{49}
  (2016), no.~4, 947--970.

\bibitem[BDM21]{BMTensTriang}
Laurent Berger and Giovanni Di~{M}atteo, \emph{On triangulable tensor products
  of {$B$}-pairs and trianguline representations}, International Journal of
  Number Theory \textbf{17} (2021), no.~10, 2221--2233.

\bibitem[Ber02]{Ber02}
Laurent Berger, \emph{Représentations p-adiques et {\'e}quations
  différentielles}, Inventiones mathematicae \textbf{148} (2002), no.~2,
  219--284.

\bibitem[Ber13]{berger2012multivariable}
\bysame, \emph{Multivariable {L}ubin-{T}ate {$(\varphi,\Gamma)$}-modules and
  filtered {$\varphi$}-modules}, Math. Res. Lett. \textbf{20} (2013), no.~3,
  409--428.

\bibitem[Ber16a]{Ber14iterate}
\bysame, \emph{Iterated extensions and relative {L}ubin-{T}ate groups}, Ann.
  Math. Qu\'e. \textbf{40} (2016), no.~1, 17--28.

\bibitem[Ber16b]{Ber14MultiLa}
\bysame, \emph{Multivariable {$(\varphi,\Gamma)$}-modules and locally analytic
  vectors}, Duke Math. J. \textbf{165} (2016), no.~18, 3567--3595.

\bibitem[Bre98]{breuil1998schemas}
Christophe Breuil, \emph{Sch{\'e}mas en groupes et corps de normes}, 1998.

\bibitem[CC98]{cherbonnier1998representations}
Fr{\'e}d{\'e}ric Cherbonnier and Pierre Colmez, \emph{Repr{\'e}sentations
  p-adiques surconvergentes}, Inventiones mathematicae \textbf{133} (1998),
  no.~3, 581--611.

\bibitem[Col94]{colmez1994resultat}
Pierre Colmez, \emph{Sur un r{\'e}sultat de {S}hankar {S}en}, Comptes rendus de
  l'Acad{\'e}mie des sciences. S{\'e}rie 1, Math{\'e}matique \textbf{318}
  (1994), no.~11, 983--985.

\bibitem[Col02]{Col02}
\bysame, \emph{Espaces de {B}anach de dimension finie}, Journal of the
  Institute of Mathematics of Jussieu \textbf{1} (2002), no.~3, 331--439.

\bibitem[Col08a]{colmez2008espaces}
\bysame, \emph{Espaces {V}ectoriels de dimension finie et repr{\'e}sentations
  de de {R}ham}, Ast{\'e}risque \textbf{319} (2008), 117--186.

\bibitem[Col08b]{colmez2008representations}
\bysame, \emph{Repr{\'e}sentations triangulines de dimension 2}, Ast{\'e}risque
  \textbf{319} (2008), no.~213-258, 83.

\bibitem[Col14]{colmez2014serie}
\bysame, \emph{La s{\'e}rie principale unitaire de {$GL_2(\mathbf{Q}_p)$} :
  vecteurs localement analytiques}, Automorphic forms and {G}alois
  representations \textbf{1} (2014), 286--358.

\bibitem[Col16]{colmezLT}
\bysame, \emph{Repr{\'e}sentations localement analytiques de
  $\mathrm{GL}_2(\mathbf{Q}_p)$ et ($\varphi$, {$\Gamma$})-modules},
  Representation Theory of the American Mathematical Society \textbf{20}
  (2016), no.~9, 187--248.

\bibitem[Din14]{Dingpartially}
Yiwen Ding, \emph{On some partially de {R}ham galois representations}, arXiv
  preprint arXiv:1410.4597 (2014).

\bibitem[Eme09]{emerton2009p}
Matthew Emerton, \emph{$p$-adic families of modular forms}, S{\'e}minaire
  Bourbaki (2009), 2009--2010.

\bibitem[Eme17]{emerton2004locally}
\bysame, \emph{Locally analytic vectors in representations of locally
  {$p$}-adic analytic groups}, Mem. Amer. Math. Soc. \textbf{248} (2017),
  no.~1175, iv+158.

\bibitem[Fon90]{Fon90}
Jean-Marc Fontaine, \emph{Repr{\'e}sentations p-adiques des corps locaux
  (1{\`e}re partie)}, The Grothendieck Festschrift, Springer, 1990,
  pp.~249--309.

\bibitem[Fon94a]{fontaine1994corps}
\bysame, \emph{Le corps des p{\'e}riodes $p$-adiques}, Ast{\'e}risque (1994),
  no.~223, 59--102.

\bibitem[Fon94b]{fontaine1994representations}
\bysame, \emph{Repr{\'e}sentations $p$-adiques semi-stables}, Ast{\'e}risque
  \textbf{223} (1994), 113--184.

\bibitem[Fon04]{fontaine2004arithmetique}
\bysame, \emph{Arithm{\'e}tique des repr{\'e}sentations galoisiennes
  p-adiques}, Ast{\'e}risque \textbf{295} (2004), 1--115.

\bibitem[Fou09]{fourquaux2009applications}
Lionel Fourquaux, \emph{Applications $\mathbf{Q}_p$-lin{\'e}aires, continues et
  {G}alois-{\'e}quivariantes de $\mathbf{C}_p$ dans lui-m{\^e}me}, Journal of
  Number Theory \textbf{129} (2009), no.~6, 1246--1255.

\bibitem[FX14]{FX13}
Lionel Fourquaux and Bingyong Xie, \emph{Triangulable
  $\mathcal{O}_{F}$-analytic ($\varphi_q$, {$\Gamma$})-modules of rank $2$},
  Algebra \& Number Theory \textbf{7} (2014), no.~10, 2545--2592.

\bibitem[GP19]{GP18}
Hui Gao and L{\'e}o Poyeton, \emph{Locally analytic vectors and overconvergent
  $(\varphi,\tau)$-modules}, Journal of the Institute of Mathematics of Jussieu
  (2019), 1--49.

\bibitem[Ked05]{slopes}
Kiran~S Kedlaya, \emph{Slope filtrations revisited}, Doc. Math \textbf{10}
  (2005), no.~447525.15.

\bibitem[Kis03]{Kisin2003}
Mark Kisin, \emph{Overconvergent modular forms and the {F}ontaine-{M}azur
  conjecture}, Inventiones mathematicae \textbf{153} (2003), no.~2, 373--454.

\bibitem[Kis06]{KisinFiso}
\bysame, \emph{Crystalline representations and {F}-crystals}, Algebraic
  geometry and number theory, Springer, 2006, pp.~459--496.

\bibitem[KR09]{KR09}
Mark Kisin and Wei Ren, \emph{Galois representations and {L}ubin-{T}ate
  groups}, Doc. Math \textbf{14} (2009), 441--461.

\bibitem[LT65]{LT65}
Jonathan Lubin and John Tate, \emph{Formal complex multiplication in local
  fields}, Annals of Mathematics (1965), 380--387.

\bibitem[Mat95]{matsuda1995local}
Shigeki Matsuda, \emph{Local indices of $p$-adic differential operators
  corresponding to {A}rtin-{S}chreier-{W}itt coverings}, Duke Mathematical
  Journal \textbf{77} (1995), no.~3, 607--625.

\bibitem[Maz00]{mazur2000theme}
Barry Mazur, \emph{The theme of $p$-adic variation}, Mathematics: frontiers and
  perspectives (2000), 433--459.

\bibitem[Nak09]{Nakapieds}
Kentaro Nakamura, \emph{Classification of two-dimensional split trianguline
  representations of $p$-adic fields}, Compositio Mathematica \textbf{145}
  (2009), no.~4, 865--914.

\bibitem[Pan22]{pan2022locally}
Lue Pan, \emph{On locally analytic vectors of the completed cohomology of
  modular curves}, Forum of Mathematics, Pi, vol.~10, Cambridge University
  Press, 2022, p.~e7.

\bibitem[Por22]{Porat}
Gal Porat, \emph{Lubin--{T}ate theory and overconvergent {H}ilbert modular
  forms of low weight}, Israel Journal of Mathematics \textbf{249} (2022),
  no.~1, 431--476.

\bibitem[Por24]{PoratFF}
\bysame, \emph{Locally analytic vector bundles on the {F}argues--{F}ontaine
  curve}, Algebra \& Number Theory \textbf{18} (2024), no.~5, 899--946.

\bibitem[Poy23]{PnoteBpairs}
L{\'e}o Poyeton, \emph{{$F$}-analytic {$B$}-pairs}, Proceedings of the American
  Mathematical Society \textbf{151} (2023), no.~06, 2399--2415.

\bibitem[RJRC22]{RJRC21}
Joaqu{\'\i}n Rodrigues~Jacinto and Juan Rodr{\'\i}guez~Camargo, \emph{Solid
  locally analytic representations of $p$-adic lie groups}, Representation
  Theory of the American Mathematical Society \textbf{26} (2022), no.~31,
  962--1024.

\bibitem[Sch11]{schneider2011p}
Peter Schneider, \emph{$p$-{A}dic {L}ie groups}, vol. 344, Springer Science \&
  Business Media, 2011.

\bibitem[Sen72]{sen1972ramification}
Shankar Sen, \emph{Ramification in $p$-adic {L}ie extensions}, Inventiones
  mathematicae \textbf{17} (1972), no.~1, 44--50.

\bibitem[Sen80]{sen1980continuous}
\bysame, \emph{Continuous cohomology and $p$-adic {G}alois representations},
  Inventiones mathematicae \textbf{62} (1980), no.~1, 89--116.

\bibitem[Ven03]{venjakob2003iwasawa}
Otmar Venjakob, \emph{On the {I}wasawa theory of $p$-adic {L}ie extensions},
  Compositio Mathematica \textbf{138} (2003), no.~1, 1--54.

\bibitem[Win83]{Win83}
Jean-Pierre Wintenberger, \emph{Le corps des normes de certaines extensions
  infinies de corps locaux; applications}, Annales scientifiques de l'Ecole
  Normale Sup{\'e}rieure, vol.~16, Soci{\'e}t{\'e} math{\'e}matique de France,
  1983, pp.~59--89.

\end{thebibliography}
\end{document}